\numberwithin{equation}{section}
\setlist{label={$($\arabic{enumi}\kern1pt$)$}}
\setlist[itemize,1]{label={\textbullet}}
\newtheorem{fed}{Definition}[section]
\newtheorem{theorem}[fed]{Theorem}
\newtheorem{proposition}[fed]{Proposition}
\newtheorem{corollary}[fed]{Corollary}
\newtheorem{lemma}[fed]{Lemma}
\theoremstyle{remark}
\newtheorem{remark}[fed]{Remark}
\newtheorem{finalremark}[fed]{Final remark}
\newtheorem{example}[fed]{Example}
\theoremstyle{definition}
\newtheorem{definition}[fed]{Definition}
\newcommand{\ip}[2]{{
    \left<
      #1,#2
    \right>}}
\newcommand{\roverline}[1]{\mathpalette\doroverline{#1}}
\newcommand{\doroverline}[2]{\overline{#1#2}}
\def\H{\mathcal{H}}
\def\K{\mathcal{K}}
\def\M{\mathcal{M}}
\def\cl{\roverline}
\def\D{{\mathcal{D}}}
\def\M{{\mathcal{M}}}
\def\N{{\mathcal{N}}}
\def\P{{\mathcal{P}}}
\def\Q{{\mathcal{Q}}}
\def\K{{\mathcal{K}}}
\def\L{{\mathcal{L}}}
\newcommand{\rs}{{\phantom{1}}}
\newcommand{\hs}{{\hphantom{*}}}
\newcommand\encircle[1]{%
  \tikz[baseline=(X.base)] 
    \node (X) [draw, shape=circle, inner sep=0] {#1};}
\newcommand{\ssharp}{{\scriptscriptstyle\sharp}}
\newcommand{\core}{{\encircle{$\scriptscriptstyle\sharp$}}}
\newcommand{\minus}{%
  \mathrel{\vbox{\offinterlineskip\ialign{%
    \hfil##\hfil\cr
    $\scriptscriptstyle-$\cr
    \noalign{\kern-.1ex}
    $\leq$\cr
}}}}
\newcommand{\notminus}{%
  \mathrel{\vbox{\offinterlineskip\ialign{%
    \hfil##\hfil\cr
    $\scriptscriptstyle-$\cr
    \noalign{\kern-.1ex}
    $\not\leq$\cr
}}}}
\newcommand{\stella}{%
  \mathrel{\vbox{\offinterlineskip\ialign{%
    \hfil##\hfil\cr
    $\scriptstyle*$\cr
    $\leq$\cr
}}}}
\newcommand{\lstella}{%
  \mathrel{\vbox{\offinterlineskip\ialign{%
    \hfil##\hfil\cr
    $\scriptstyle*$\cr
    \noalign{\kern.45ex}}}\kern-.3ex\leq}}
\newcommand{\rstella}{%
  \mathrel{\leq\kern-.4ex\vbox{\offinterlineskip\ialign{%
    \hfil##\hfil\cr
    $\scriptstyle*$\cr
    \noalign{\kern.45ex}}}}}
\newcommand{\sharppo}{%
  \mathrel{\vbox{\offinterlineskip\ialign{%
    \hfil##\hfil\cr
    $\scriptscriptstyle\sharp$\cr
    \noalign{\kern-.05ex}
    $\leq$\cr
}}}}
\newcommand{\corepo}{%
  \mathrel{\vbox{\offinterlineskip\ialign{%
    \hfil##\hfil\cr
    ${\encircle{$\scriptscriptstyle\sharp$}}$\cr
    \noalign{\kern-.05ex}
    $\leq$\cr
}}}}
\def\dotplus{\mathrel{\dot{+}}}
\def\Oplus{\mathrel{\oplus}}
\newcommand{\lminus}{%
  \mathrel{\vbox{\offinterlineskip\ialign{%
    \hfil##\hfil\cr
    $\scriptscriptstyle-$\cr
    \noalign{\kern.1ex}}}\kern-.45ex\leq}}
\newcommand{\rminus}{%
  \mathrel{\leq\kern-.45ex\vbox{\offinterlineskip\ialign{%
    \hfil##\hfil\cr
    $\scriptscriptstyle-$\cr
    \noalign{\kern.1ex}}}}}
\date{}
\begin{document}

\begin{frontmatter}

\title{The minus order and range additivity}

\author[author1]{Marko S. Djiki\'{c}\fnref{fn1}}
\ead{marko.djikic@gmail.com}

\author[author2]{Guillermina Fongi\fnref{fn2}}
\ead{gfongi@conicet.gov.ar}

\author[author2,author3]{Alejandra Maestripieri \corref{cor1}\fnref{fn3}}
\ead{amaestri@fi.uba.ar}

\cortext[cor1]{Corresponding author}
\fntext[fn1]{Supported by Grant No. 174007 of the Ministry of Education, Science and Technological Development of the Republic of Serbia}
\fntext[fn2]{Partially supported by   CONICET (PIP 426 2013-2015)}
\fntext[fn3]{Partially supported by CONICET (PIP 168 2014-2016)}

\address[author1]{Department of Mathematics, Faculty of Sciences and Mathematics, University of Ni\v{s}, 
Vi\v{s}egradska 33, 18000 Ni\v{s}, Serbia.}

\address[author2]{Instituto Argentino de Matem\'atica ``Alberto P. Calder\'on"
 Saavedra 15, Piso 3 (1083), Buenos Aires, Argentina.}

\address[author3]{Departamento de Matem\'atica, Facultad de Ingenier\'ia, Universidad de Buenos Aires.}

\begin{abstract}
  We study the minus order on the algebra of bounded linear operators
  on a Hilbert space.  By giving a characterization in terms of range
  additivity, we show that the intrinsic nature of the minus order is
  algebraic.  Applications to generalized inverses of the sum of two
  operators, to systems of operator equations and to optimization
  problems are also presented.
\end{abstract}

\begin{keyword}
  minus order \sep range additivity \sep generalized inverses \sep
  least squares problems

\MSC 06A06 \sep  47A05
\end{keyword}

\end{frontmatter}

\section{Introduction}

The minus order was introduced by Hartwig \cite{Har80} and
independently by Nambooripad \cite{Nam80}, in both cases on
semigroups, with the idea of generalizing some classical partial
orders.  It was extended to operators in infinite dimensional spaces
independently by Antezana, Corach and Stojanoff \cite{AntCorSto06}
and by $\check{\textrm{S}}$emrl \cite{Sem10}.  There is now an
extensive literature devoted to this order and other related partial
orders on matrices, operators and elements of various algebraic
structures.  See for example, \cite{Mit91,Mit86,BakTre10}.
 
The main goal of this work is to obtain a new characterization of the
minus order for operators acting on Hilbert spaces in terms of the so
called range additivity property.  Given two linear bounded operators
$A$ and $B$ acting on a Hilbert space $\H$, we say that $A$ and $B$
have the \emph{range additivity property} if $R(A+B)=R(A)+R(B)$, where
$R(T)$ stands for the range of an operator $T$.  Operators with this
property have been studied in \cite{AriCorGon13} and
\cite{AriCorMae15} (see also \cite{CorFonMae16}).  Recall that if $A$
and $B$ are two bounded linear Hilbert space operators then $A\minus
B$ (where the symbol ``$\minus$'' stands for the minus order of
operators) if and only if there are oblique projections $P$ and $Q$
such that $A = PB$ and $A^* = QB^*$.  In this paper, we prove that
this is equivalent to the range of $B$ being the direct sum of ranges
of $A$ and $B-A$ and the range of $B^*$ being the direct sum of ranges
of $A^*$ and $B^*-A^*$.  Thus the minus order is intrinsically
algebraic in nature.  This plays an equivalent role to a known
characterization when $A$ and $B$ are matrices \cite{Mit91,Har80};
that $A\minus B$ if and only if the rank of $B-A$ is the difference of
the rank of $B$ and the rank of $A$.

As a consequence, diverse concepts that have been developed for
matrices and operators are in fact manifestations of the minus order.
These include, for example, the notions of weakly bicomplementary
matrices defined due to Werner \cite{Wer86}, and quasidirect addition
of operators defined by Le\v snjak and \v Semrl \cite{LesSem96}.
Although in these papers the minus order does not appear explicitly,
these notions when applied to operators $A$ and $B$ are equivalent to
saying that $A\minus A+B$.  The minus order also lurks in the papers
of Baksalary and Trenkler \cite{BakTre11}, Baksalary,
$\check{\textrm{S}}$emrl and Styan \cite{BakSemSty96}, Mitra
\cite{Mit72} and Arias, Corach and Maestripieri \cite{AriCorMae15}.

The minus order can be weakened to what we call left and right minus
orders.  As with the minus order, these orders are easily derived from
a range additivity condition.  It happens that they truly differ from
the minus order only in the infinite dimensional setting.  When
$A\minus B$, we give some applications to formulas for generalized
inverses of sums $A+B$ in terms of generalized inverses of $A$ and
$B$, and we show that certain optimization problems involving the
operator $A+B$ can be decoupled into a system of similar problems for
$A$ and $B$.

The paper is organized as follows.  In Section 2 we collect some
useful known results about range additivity, while in Section 3, the
minus order is defined and the connection with range additivity is
made.  Motivated by the concepts of the left and the right star
orders, we define left and the right minus orders on $ L(\H)$.  For
matrices, these are equivalent to the minus order, with differences
only emerge in the infinite dimensional context.
Proposition~\ref{leftminus and projections} characterizes the left
minus order in terms of densely defined, though not necessarily
bounded, projections.  Additionally, the left minus, the right minus
and the minus orders are characterized in terms of (densely defined)
inner generalized inverses, generalizing a matricial result (see
\cite{Mit86}).

Finally, Section 4 is devoted to applications.  We begin by relating
the minus partial order to some formulas for reflexive inner inverses
of the sum of two operators.  In particular, we give an alternative
proof for the Fill-Fishkind formula for the Moore-Penrose inverse of a
sum, as found in \cite{FilFis99} for matrices and extended to $L(\H)$
by Arias et al. \cite{AriCorMae15}.  We also apply the new
characterization of the minus order to systems of equations and least
squares problems.  We include a final remark about a possible
generalization of the minus order involving densely defined
projections with closed range.

\section{Preliminaries}

Throughout, $(\H, \ip{\cdot}{\cdot})$ denotes a complex Hilbert space
and $L(\H)$ the algebra of linear bounded operators on $\H$, $\mathcal
Q$ is the subset of $L(\H)$ of oblique projections, i.e., $\mathcal
Q=\{Q\in L(\H):\, Q^2=Q\}$ and $\mathcal P$ the subset of $\Q$ of
orthogonal projections, i.e., $\mathcal P=\{P\in L(\H): P^2=P=P^*\}$.

Given $\mathcal{M}$ and $\mathcal{N}$ two closed subspaces of $\H$,
write $\mathcal{M}\dotplus \mathcal{N}$ for the direct sum of
$\mathcal M$ and $\mathcal N$, $\mathcal{M}\Oplus \mathcal{N}$ the
orthogonal sum and $\mathcal M\ominus \mathcal N=\mathcal M \cap
(\mathcal M\cap\mathcal N)^\perp$.  If $\mathcal{M}\dotplus
\mathcal{N}=\H$, the oblique projection with range $\mathcal{M}$ and
null space $\mathcal{N}$ is $P_{\mathcal{M}// \mathcal{N}}$ and
$P_{\mathcal{M}}=P_{\mathcal{M}//\mathcal{M}^\perp}$ is the orthogonal
projection onto $\M$.

For $A \in L(\H)$, $R(A)$ stands for the range of $A$, $N(A)$ for its
null space and $P_A$ for $P_{\cl{R(A)}}$.  The Moore-Penrose inverse of
$A$ is the (densely defined) operator $A^\dagger: R(A)\Oplus
R(A)^\bot\rightarrow \H$, defined by
$A^\dagger|_{R(A)}=(A|_{N(A)^\perp})^{-1}$ and
$N(A^\dagger)=R(A)^\bot$.  It holds that $A^\dagger\in L(\H)$ if and
only if $A$ has a closed range.

\medskip

Given $\mathcal{M}$ and $\mathcal{N}$ two closed subspaces of $\H$,
the \textit{minimal angle} between $\mathcal M$ and $\mathcal N$ is
$\alpha_0(\mathcal M,\mathcal N) \in [0,\pi/2]$, the cosine of which
is
\begin{equation*}
  c_0(\mathcal M,\mathcal N)=\sup\,\left\{|\ip{\xi}{\eta} |:~ \xi
    \in\mathcal M,~\| \xi 
    \|\leq 1,~~\eta\in\mathcal N,~\|\eta\|\leq 1\right\} \,\in[0,1].
\end{equation*}

When the minimal angle between $\M$ and $\N$ is strictly less that 1,
then the sum $\M+\N$ is closed and direct, moreover, we have the
following.

\begin{proposition} \label{angle c0} Let $\mathcal{M}$ and
  $\mathcal{N}$ be two closed subspaces of $\H$.  The following
  statements are equivalent:
  \begin{enumerate}
  \item $c_0(\mathcal M,\mathcal N)<1$;
  \item $\mathcal M\dotplus \mathcal N$ is closed;
  \item  $\H=\mathcal M^\perp+\mathcal N^\perp$.
  \end{enumerate}
\end{proposition}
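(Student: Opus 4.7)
The plan is to establish the cycle $(1)\Rightarrow(2)\Rightarrow(3)\Rightarrow(1)$, using in turn the defining bilinear estimate for $c_0$, duality via the adjoint of a bounded oblique projection, and the open mapping theorem.

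For $(1)\Rightarrow(2)$, I would first observe that $c:=c_0(\M,\N)<1$ forces $\M\cap\N=\{0\}$, since any unit vector in the intersection would give $c\geq 1$. Expanding $\|m+n\|^2$ and using $|\ip{m}{n}|\leq c\|m\|\|n\|$ then yields
\[
  \|m+n\|^2 \;\geq\; (1-c)\bigl(\|m\|^2+\|n\|^2\bigr) \qquad (m\in\M,\ n\in\N),
\]
so the addition map $\M\oplus\N\to\M+\N$ is bounded below and hence has closed image.

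For $(2)\Rightarrow(3)$, set $\mathcal{S}=\M\dotplus\N$, which is closed by hypothesis. Inside $\mathcal{S}$, the oblique projection with range $\M$ and kernel $\N$ is bounded by the closed graph theorem; extending it by zero on $\mathcal{S}^\perp$ produces a bounded oblique projection $E\in L(\H)$ with $R(E)=\M$ and $N(E)=\N\oplus\mathcal{S}^\perp$. The adjoint $E^*$ is again a bounded projection, so $\H = R(E^*)\dotplus N(E^*)$; since $N(E^*)=R(E)^\perp=\M^\perp$ and $R(E^*)=N(E)^\perp=\N^\perp\cap\mathcal{S}\subseteq\N^\perp$, this yields $\H\subseteq\M^\perp+\N^\perp$, which is $(3)$. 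I expect this to be the most delicate step, because the hypothesis does not force $\mathcal{S}=\H$, and it is the extension by zero on $\mathcal{S}^\perp$ that makes the adjoint argument go through cleanly in $L(\H)$.

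For $(3)\Rightarrow(1)$, the open mapping theorem applied to the bounded surjection $F\colon\M^\perp\oplus\N^\perp\to\H$, $F(u,v)=u+v$, provides $C>0$ such that every unit vector $h\in\H$ admits a decomposition $h=u+v$ with $u\in\M^\perp$, $v\in\N^\perp$ and $\|u\|,\|v\|\leq C$. Applied to a unit $\eta\in\N$, the fact that $\eta\perp v$ gives $1=\ip{\eta}{u}=\ip{P_{\M^\perp}\eta}{u}$, hence $\|P_{\M^\perp}\eta\|\geq 1/C$ and therefore $\|P_\M\eta\|^2\leq 1-1/C^2$. Taking the supremum over unit $\eta\in\N$ yields $c_0(\M,\N)\leq\sqrt{1-1/C^2}<1$.
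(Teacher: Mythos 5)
Your argument is correct in all three steps: the lower bound $\|m+n\|^2\geq(1-c)(\|m\|^2+\|n\|^2)$ does show the addition map $\M\oplus\N\to\H$ is bounded below, the extension-by-zero of the oblique projection on $\mathcal S=\M\dotplus\N$ and passage to the adjoint correctly produces $\H=R(E^*)\dotplus N(E^*)\subseteq\N^\perp+\M^\perp$, and the open mapping theorem applied to $F(u,v)=u+v$ gives the uniform bound that forces $\|P_\M\eta\|^2\leq 1-1/C^2$ for unit $\eta\in\N$, hence $c_0(\M,\N)<1$. Note, however, that the paper does not prove this proposition at all: it simply cites Lemma~2.11 and Theorem~2.12 of Deutsch, so there is no in-text argument to compare against. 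Deutsch's treatment runs through the Friedrichs angle $c(\M,\N)$ (the minimal angle between $\M\ominus(\M\cap\N)$ and $\N\ominus(\M\cap\N)$) together with the duality identity $c(\M,\N)=c(\M^\perp,\N^\perp)$, which yields quantitative relations between the angles of the pair $(\M,\N)$ and of $(\M^\perp,\N^\perp)$; your route trades that quantitative information for a shorter, purely qualitative argument built on the closed graph and open mapping theorems, and is entirely adequate for the equivalence as stated. One small point worth making explicit in $(2)\Rightarrow(3)$ is that $R(E^*)=N(E)^\perp$ (rather than merely being dense in it) because $E^*$ is itself a bounded idempotent and therefore has closed range; you use this silently when you write $R(E^*)=\N^\perp\cap\mathcal S$.
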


For a proof, see Lemma 2.11 and Theorem 2.12 in \cite{Deu95}.

\medskip

For $ A, B\in L(\H)$, it always holds that $R(A+B)\subseteq
R(A)+R(B)$.  We say that $ A$ and $ B$ have the \textit{range
  additivity property} if $R(A +B) =R(A) +R(B)$.  In this case,
$R(A)\subseteq R(A+B)$.  Conversely, if $R(A)\subseteq R(A+B)$ then,
for $x\in \H$, $Bx=(A+B)x-Ax\in R(A+B)$.  We have proved the
following.

\begin{lemma}[{\cite[Proposition~2.4]{AriCorMae15}}]
  \label{char1 RAP} 
  For $A, B\in L(\H)$, $R(A+B)=R(A)+R(B)$ if and only if
  $R(A)\subseteq R(A+B)$.
\end{lemma}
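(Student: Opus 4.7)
The statement is an iff, so the plan is to handle the two directions separately, both by elementary inclusion chasing, using only the standing fact that $R(A+B)\subseteq R(A)+R(B)$ always holds.

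For the forward direction, suppose $R(A+B)=R(A)+R(B)$. Then trivially $R(A)\subseteq R(A)+R(B)=R(A+B)$, which gives the claimed inclusion immediately. There is nothing deeper to check here.

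For the converse, assume $R(A)\subseteq R(A+B)$. The idea is to show $R(B)\subseteq R(A+B)$ as well, since then $R(A)+R(B)\subseteq R(A+B)$, and combined with the always-true reverse inclusion $R(A+B)\subseteq R(A)+R(B)$ we get equality. To get $R(B)\subseteq R(A+B)$, take an arbitrary $x\in\H$ and write
\begin{equation*}
  Bx=(A+B)x-Ax.
\end{equation*}
The first term lies in $R(A+B)$ by definition, and the second lies in $R(A)$, hence in $R(A+B)$ by hypothesis; so their difference $Bx$ lies in $R(A+B)$. This exhausts $R(B)$ and completes the proof.

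There is no genuine obstacle here: the argument is a one-line rearrangement $B=(A+B)-A$, and what the lemma really records is that the nontrivial content of the range additivity property is captured by the single inclusion $R(A)\subseteq R(A+B)$. I would simply present it as the two short paragraphs above, noting that by symmetry of the roles of $A$ and $B$ the condition $R(B)\subseteq R(A+B)$ would also do.
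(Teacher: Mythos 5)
Your proof is correct and follows essentially the same route as the paper: the forward direction is the trivial inclusion $R(A)\subseteq R(A)+R(B)=R(A+B)$, and the converse uses exactly the rearrangement $Bx=(A+B)x-Ax$ together with the always-valid inclusion $R(A+B)\subseteq R(A)+R(B)$. Nothing is missing; the paper records the same two-line argument in the paragraph preceding the lemma.
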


Operators having the range additivity property were characterized in
\cite[Theorem~2.10]{AriCorMae15}.  Closely related is the following
for operators $A, B\in L(\H)$ satisfying the condition $R(A)\cap
R(B)=\{0\}$.

\begin{proposition}[{\cite[Theorem~2.10]{AriCorMae15}}]
  \label{char2 RAP} 
  Consider $A, B\in L(\H)$ such that $R(A)\cap R(B)=\{0\}$ then
  $R(A+B)=R(A)\dotplus R(B)$ if and only if $ \H=N(A)+N(B)$.
\end{proposition}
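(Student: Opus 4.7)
The plan is to reduce the claim to range additivity via Lemma~\ref{char1 RAP} and then exchange information between ranges and null spaces using the hypothesis $R(A) \cap R(B) = \{0\}$.

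First I would observe that, under the trivial intersection assumption, $R(A+B) = R(A) \dotplus R(B)$ is the same statement as $R(A+B) = R(A) + R(B)$, so by Lemma~\ref{char1 RAP} it is equivalent to the single inclusion $R(A) \subseteq R(A+B)$. The whole proposition therefore becomes: $R(A) \subseteq R(A+B)$ iff $\H = N(A) + N(B)$.

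For the forward direction, I would take an arbitrary $x \in \H$ and use $Ax \in R(A) \subseteq R(A+B)$ to write $Ax = (A+B)y$ for some $y \in \H$. Rearranging gives $A(x-y) = By$, and both sides lie in $R(A) \cap R(B) = \{0\}$, so $A(x-y) = 0$ and $By = 0$. This yields the decomposition $x = (x-y) + y$ with $x - y \in N(A)$ and $y \in N(B)$, proving $\H \subseteq N(A) + N(B)$.

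For the converse, given $Ax \in R(A)$ I would decompose $x = u + v$ with $u \in N(A)$ and $v \in N(B)$. Then $Ax = A u + A v = Av = Av + Bv = (A+B)v \in R(A+B)$, establishing $R(A) \subseteq R(A+B)$, and hence range additivity by Lemma~\ref{char1 RAP}.

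There is no real obstacle here; the only place where the hypothesis $R(A) \cap R(B) = \{0\}$ is actively used is in the forward direction, to split the identity $A(x-y) = By$ into two separate vanishing conditions. The converse uses only the decomposition $\H = N(A) + N(B)$, so one could even state it slightly more generally, but the symmetric form of the proposition is the cleanest.
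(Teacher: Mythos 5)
Your argument is correct, and it is worth noting that the paper itself offers no proof of this proposition: it is imported wholesale as \cite[Theorem~2.10]{AriCorMae15}, so your write-up supplies a self-contained argument where the authors rely on a citation. The reduction via Lemma~\ref{char1 RAP} to the single inclusion $R(A)\subseteq R(A+B)$ is exactly the right move, and both directions are sound: in the forward direction the identity $A(x-y)=By$ together with $R(A)\cap R(B)=\{0\}$ forces both sides to vanish and hands you the decomposition $x=(x-y)+y\in N(A)+N(B)$; in the converse, writing $x=u+v$ with $u\in N(A)$, $v\in N(B)$ gives $Ax=Av=(A+B)v\in R(A+B)$ without using the trivial-intersection hypothesis at all. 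Your closing observation that the converse direction is the more general half is consistent with the paper's own Proposition~\ref{equiv range sum, kernels}, where the implication $N(A)+N(B)=\H\Rightarrow R(A+B)=R(A)+R(B)$ is asserted unconditionally and only the reverse implication requires $R(A)\cap R(B)=\{0\}$. The only cosmetic remark is that the equivalence of $R(A+B)=R(A)\dotplus R(B)$ with $R(A+B)=R(A)+R(B)$ deserves the one-line justification you implicitly give, namely that the standing hypothesis already makes the sum $R(A)+R(B)$ direct.
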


\medskip

The next result will be useful in characterizing the minus order in
Section~\ref{sec:minus-order} (see
\cite[Proposition~2.2]{AriCorMae15}).

\begin{proposition}\label{equiv range sum, kernels}
  For $A, B\in L(\H)$ consider the following statements:
  \begin{enumerate}
  \item $\roverline{R(A^*)} \dotplus  \roverline{R(B^*)}$ is closed;
  \item there exists  $Q\in \Q$ such that $A^*=Q(A^*+B^*)$;
  \item $N(A)+N(B)=\H$;
  \item $R(A+B)=R(A)+R(B)$.
  \end{enumerate}
  Then $(1)\Leftrightarrow (2) \Leftrightarrow (3) \Rightarrow (4)$.
  The implication $(4) \Rightarrow (3)$ holds if $R(A)\cap
  R(B)=\{0\}.$
\end{proposition}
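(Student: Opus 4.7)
The plan is to establish the cycle $(1)\Leftrightarrow(3)\Leftrightarrow(2)\Rightarrow(4)$, relying on Proposition~\ref{angle c0} for the first equivalence, a short algebraic manipulation for the second, and Lemma~\ref{char1 RAP} for the last implication. The final implication $(4)\Rightarrow(3)$ under the extra hypothesis $R(A)\cap R(B)=\{0\}$ is then precisely the content of Proposition~\ref{char2 RAP}.

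The equivalence $(1)\Leftrightarrow(3)$ is immediate from Proposition~\ref{angle c0} applied with $\M=\cl{R(A^*)}$ and $\N=\cl{R(B^*)}$, since $\M^\perp=N(A)$ and $\N^\perp=N(B)$. For $(2)\Rightarrow(3)$, suppose $A^*=Q(A^*+B^*)$ with $Q\in\Q$. Subtracting yields $B^*=(I-Q)(A^*+B^*)$, and taking adjoints gives the pair of identities $A=(A+B)Q^*$ and $B=(A+B)(I-Q^*)$. Since $Q^*\in\Q$ and hence $Q^*(I-Q^*)=0$, one obtains $A(I-Q^*)=0$ and $BQ^*=0$. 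Thus any $x\in\H$ splits as $x=Q^*x+(I-Q^*)x$ with $Q^*x\in N(B)$ and $(I-Q^*)x\in N(A)$, proving (3). Conversely, for $(3)\Rightarrow(2)$, I use the already-established equivalence with (1) to conclude that $\L:=\cl{R(A^*)}\dotplus\cl{R(B^*)}$ is closed. The bounded oblique projection $P_{\cl{R(A^*)}//\cl{R(B^*)}}$ acting on $\L$ then extends to an element $Q\in\Q$ on all of $\H$ by setting it to zero on $\L^\perp$. Since $A^*x\in\cl{R(A^*)}$ and $B^*x\in\cl{R(B^*)}$ for every $x$, the identity $Q(A^*+B^*)x=QA^*x+QB^*x=A^*x$ follows by inspection.

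For $(2)\Rightarrow(4)$, the relation $A=(A+B)Q^*$ obtained above yields $R(A)\subseteq R(A+B)$, and Lemma~\ref{char1 RAP} then gives (4). The remaining implication $(4)\Rightarrow(3)$ under the assumption $R(A)\cap R(B)=\{0\}$ is exactly Proposition~\ref{char2 RAP}. The main obstacle is $(3)\Rightarrow(2)$: the construction of a bounded $Q$ genuinely requires the closedness of $\cl{R(A^*)}\dotplus\cl{R(B^*)}$, afforded by the equivalence with (1); without it, the oblique projection onto $\cl{R(A^*)}$ along $\cl{R(B^*)}$ need not be bounded, and one could at best produce a densely defined projection. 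This is also the point where the asymmetry between $(3)$ and $(4)$ surfaces, since without the additional assumption $R(A)\cap R(B)=\{0\}$, range additivity alone does not force the relevant direct sum of closures to even be a direct sum.
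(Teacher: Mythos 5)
Your proof is correct and follows essentially the same route as the paper: the equivalence $(1)\Leftrightarrow(3)$ via Proposition~\ref{angle c0}, the passage to $(4)$ through the range inclusion $R(A)\subseteq R(A+B)$ and Lemma~\ref{char1 RAP}, and Proposition~\ref{char2 RAP} for $(4)\Rightarrow(3)$ under $R(A)\cap R(B)=\{0\}$. The only difference is that where the paper simply cites \cite{AntCorSto06} and \cite{AriCorGon15} for $(1)\Leftrightarrow(2)$ and $(3)\Rightarrow(4)$, you supply the standard direct arguments (the algebraic splitting $x=Q^*x+(I-Q^*)x$ and the bounded extension of $P_{\hs\cl{R(A^*)}//\cl{R(B^*)}}$ by zero on the orthogonal complement of the closed direct sum), which is a legitimate filling-in of the same underlying ideas.
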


For a proof of $(1)\Leftrightarrow (2)$ see
\cite[Proposition~4.13]{AntCorSto06}.  $(1)\Leftrightarrow (3)$ was
stated in Proposition~\ref{angle c0}.  The implication $(3)
\Rightarrow (4)$ follows from the proof of
\cite[Proposition~2.8]{AriCorGon15}.  $(4) \Rightarrow (3)$ follows
from Proposition~\ref{char2 RAP} since $R(A)\cap R(B)=\{0\}$.

\section{The minus order}
\label{sec:minus-order}

Different definitions have been given for the \emph{minus (partial)
  order}.  For operators we offer one which equivalent to those
appearing in \cite{AntCorSto06} and \cite{Sem10}.

\begin{definition}
  For $A, B\in L(\H)$, $A \minus B$ if there exist $P, Q \in \Q$ such
  that $A=PB$ and $A^*=QB^*$.
\end{definition}

Proofs that $\minus$ is a partial order on $L(\H)$ can be found in
\cite[Corollary~4.14]{AntCorSto06} and \cite[Corollary~3]{Sem10}.
It is easy to see that the ranges of $P$ and $Q$ can be fixed so that
$R(P)=\roverline{R(A)}$ and $R(Q)=\roverline{R(A^*)}$.  For details,
see \cite[Proposition~4.13]{AntCorSto06} and the definition of minus
order in \cite{Sem10}.

\medskip

In the next proposition we collect some characterizations of the minus
order in terms of angle conditions and sum of closed subspaces.

\begin{proposition}\label{equivalencias minus order}
  Consider $A,B\in L(\H)$.  The following statements are equivalent:
  \begin{enumerate}
  \item $A\minus B$;
  \item $c_0(\roverline{R(A)}, \roverline{R(B-A)}) <1$ and
    $c_0(\roverline{R(A^*)}, \roverline{R(B^*-A^*)}) < 1$;
  \item $\roverline {R(B)}=\roverline{R(A)} \dotplus
    \roverline{R(B-A)}$ and $\roverline {R(B^*)}=\roverline{R(A^*)}
    \dotplus \roverline{R(B^*-A^*)}$;
  \item $ N(A)+ N(B-A)=N(A^*)+N(B^*-A^*)=\H$;
  \item there exists $P\in \Q$ such that $A=PB$ and $R(A)\subseteq
    R(B)$.
  \end{enumerate}
\end{proposition}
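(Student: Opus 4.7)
The plan is to establish the chain $(1) \Leftrightarrow (4) \Leftrightarrow (2) \Leftrightarrow (3)$ via Propositions~\ref{angle c0} and~\ref{equiv range sum, kernels}, and then to treat $(1) \Leftrightarrow (5)$ as a separate, mostly algebraic step. For $(1) \Leftrightarrow (4)$, I would unpack $A \minus B$ as the conjunction of the two equations $A = PB$ and $A^* = QB^*$ for some $P, Q \in \Q$. Each of these falls directly under Proposition~\ref{equiv range sum, kernels} once one applies it with the pair relabelled so that the sum of the two operators equals $B$. Applying the equivalence $(2) \Leftrightarrow (3)$ of that proposition to $(A^*, B^*-A^*)$ shows that existence of $P \in \Q$ with $A = PB$ is equivalent to $N(A^*)+N(B^*-A^*)=\H$, and applying it to $(A, B-A)$ shows that existence of $Q \in \Q$ with $A^* = QB^*$ is equivalent to $N(A)+N(B-A)=\H$. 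Conjoining these yields $(1) \Leftrightarrow (4)$.

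The equivalence $(4) \Leftrightarrow (2)$ is then immediate from Proposition~\ref{angle c0} applied to the pairs $(\roverline{R(A)}, \roverline{R(B-A)})$ and $(\roverline{R(A^*)}, \roverline{R(B^*-A^*)})$: in each case $c_0 < 1$ iff the sum of the orthogonal complements equals $\H$, which is the required kernel-sum condition. For $(2) \Leftrightarrow (3)$, the direction $(3) \Rightarrow (2)$ is clear, since $(3)$ exhibits $\roverline{R(A)} \dotplus \roverline{R(B-A)} = \roverline{R(B)}$ as closed and Proposition~\ref{angle c0} then gives $c_0 < 1$, and similarly for the adjoints. For $(2) \Rightarrow (3)$, Proposition~\ref{angle c0} first yields that $\roverline{R(A)} + \roverline{R(B-A)}$ is closed and that $\roverline{R(A)} \cap \roverline{R(B-A)} = \{0\}$, so this sum is a closed direct sum. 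The already-established equivalence with $(4)$ and the implication $(3) \Rightarrow (4)$ of Proposition~\ref{equiv range sum, kernels} applied to $(A, B-A)$ then force $R(B) = R(A) + R(B-A)$; hence $R(A), R(B-A) \subseteq R(B)$, and passing to closures produces $\roverline{R(B)} = \roverline{R(A)} + \roverline{R(B-A)}$. The same argument for adjoints completes $(3)$.

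The implication $(1) \Rightarrow (5)$ is immediate: $A^* = QB^*$ gives $A = BQ^*$, so $R(A) \subseteq R(B)$. For $(5) \Rightarrow (1)$ — the main obstacle — the strategy is to verify $N(A) + N(B-A) = \H$ and then invoke $(1) \Leftrightarrow (4)$. Two observations drive this. First, since $A = PB$ with $P^2 = P$ gives $PA = P(PB) = A$ and $P(B-A) = PB - PA = 0$, we have $R(A) \subseteq R(P)$ and $R(B-A) \subseteq N(P)$, so $R(A) \cap R(B-A) = \{0\}$. Second, the hypothesis $R(A) \subseteq R(B)$ yields $R(B-A) \subseteq R(B)$ and $R(B) = R(A) + R(B-A)$ trivially from $Bx = Ax + (B-A)x$. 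These two facts together license the converse implication $(4) \Rightarrow (3)$ of Proposition~\ref{equiv range sum, kernels}, which is valid precisely when $R(A) \cap R(B-A) = \{0\}$; the conclusion is $N(A) + N(B-A) = \H$, and thus $A^* = QB^*$. The asymmetry of $(5)$ — a range inclusion rather than a second operator equation — is exactly what this projection-algebraic trivial-intersection argument bridges.
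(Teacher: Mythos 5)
Your proposal is correct and follows essentially the same route as the paper's: the equivalences $(1)\Leftrightarrow(2)\Leftrightarrow(3)\Leftrightarrow(4)$ are obtained from Propositions~\ref{angle c0} and~\ref{equiv range sum, kernels} applied to the pairs $(A,B-A)$ and $(A^*,B^*-A^*)$, and $(5)\Rightarrow(1)$ rests on the same key observation as in the paper, namely that $R(A)\subseteq R(P)$ and $R(B-A)\subseteq N(P)$ force $R(A)\cap R(B-A)=\{0\}$, which licenses the converse implication of Proposition~\ref{equiv range sum, kernels}. The only cosmetic slip is your announced strategy of ``invoking $(1)\Leftrightarrow(4)$'' for $(5)\Rightarrow(1)$ even though you establish only one of the two kernel conditions in $(4)$; what you actually do --- produce $Q\in\Q$ with $A^*=QB^*$ and pair it with the given $A=PB$ to satisfy the definition of the minus order directly --- is exactly right and is precisely the paper's argument.
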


\begin{proof}
  The equivalences $(1) \Leftrightarrow (2) \Leftrightarrow (4)$
  follow applying the definition of the minus order and
  Proposition~\ref{angle c0} to the operators $A$, $B-A$, $A^*$ and
  $B^*-A^*$, see also \cite[Proposition~4.13]{AntCorSto06}.

  For $(2) \Leftrightarrow (3)$, suppose that $c_0(\roverline{R(A)},
  \roverline{R(B-A)}) <1$ and $c_0(\roverline{R(A^*)},
  \roverline{R(B^*-A^*)}) <1$.  Then $\cl{R(A)}\dotplus \cl{R(B-A)}$
  and $\cl{R(A^*)}\dotplus \cl{R(B^*-A^*)} $ are closed.  In this
  case, $\cl{R(B)}\subseteq \cl{R(A)}\dotplus \cl{R(B-A)}$.  On the
  other hand, applying Proposition~\ref{equiv range sum, kernels},
  there exists $Q\in \Q$ such that $A^*=QB^*$.  Then $N(B^*) \subseteq
  N(A^*)$ and $N(B^*)\subseteq N(B^*-A^*)$, or $\cl{R(A)}\subseteq
  \cl{R(B)}$ and $\cl{R(B-A)}\subseteq \cl{R(B)}$.  Then $\roverline
  {R(B)}=\roverline{R(A)} \dotplus \roverline{R(B-A)}$.  Similarly,
  $\roverline {R(B^*)}=\roverline{R(A^*)} \dotplus
  \roverline{R(B^*-A^*)}$.  See also \cite[Theorem~2]{Sem10}.
  Conversely, if item 3 holds, then $\cl{R(A)}\dotplus \cl{R(B-A)} $
  and $\cl{R(A^*)}\dotplus \cl{R(B^*-A^*)} $ are closed or
  equivalently, by Proposition~\ref{angle c0}, item 2 holds.

  Next consider $(1) \Leftrightarrow (5)$.  If $A\minus B$ then
  $A=PB=BQ^*$ with $P,Q\in \Q$, so that $A=PB$ and $R(A)\subseteq
  R(B)$.  Conversely, suppose $R(A)\subseteq R(B)$ and there exists
  $P\in \Q$ such that $A=PB$.  Then by Lemma~\ref{char1 RAP} it holds
  that $R(B)=R(A)+R(B-A)$.  Moreover $R(A)\cap R(B-A)=\{0\}$ because
  $R(A)\subseteq R(P)$ and $R(B-A)\subseteq N(P)$, so that
  $R(B)=R(A)\dotplus R(B-A)$.  In this case, $(4) \Rightarrow (2)$ of
  Proposition~\ref{equiv range sum, kernels} can be applied so that
  there exists $Q\in \Q$ such that $A^*=QB^*$.  Therefore $A\minus B$.
\end{proof}

\medskip

The following is a key result that will be useful on many occasions
throughout the paper.  It gives a new characterization of the minus
partial order in terms of the range additivity property, showing that
the minus order has an algebraic nature.

\begin{theorem}
  \label{equiv minus and RAP}
  Consider $A,B\in L(\H)$.  Then the following assertions are
  equivalent:
  \begin{enumerate}
  \item $A\minus B$;
  \item $R(B)=R(A)\dotplus R(B-A)$ and $R(B^*)=R(A^*)\dotplus
    R(B^*-A^*)$.
\end{enumerate}
\end{theorem}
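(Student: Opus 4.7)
The plan is to prove the two implications separately, leveraging the machinery already assembled in Propositions \ref{equiv range sum, kernels} and \ref{equivalencias minus order}. The key new ingredient compared with item (3) of Proposition \ref{equivalencias minus order} is that the direct sum decomposition is required to hold on the level of the actual ranges (not only their closures), and this extra strength is exactly what enables the use of the $(4)\Rightarrow(3)$ implication of Proposition \ref{equiv range sum, kernels}, which is conditional on a trivial intersection of ranges.

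For the direction $(1)\Rightarrow(2)$, I would start from the defining identities $A=PB$ and $A^{*}=QB^{*}$ with $P,Q\in\Q$. From $A^{*}=QB^{*}$ I would take adjoints to get $A=BQ^{*}$, which immediately yields $R(A)\subseteq R(B)$, whence $R(A)\subseteq R(A+(B-A))$; Lemma \ref{char1 RAP} then gives $R(B)=R(A)+R(B-A)$. To upgrade this to a direct sum, I would observe that $A=PB$ forces $R(A)\subseteq R(P)$ while $B-A=(I-P)B$ forces $R(B-A)\subseteq N(P)$, so $R(A)\cap R(B-A)\subseteq R(P)\cap N(P)=\{0\}$. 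The same argument applied to the adjoints, using $B^{*}-A^{*}=(I-Q)B^{*}$ together with $R(A^{*})\subseteq R(Q)$, yields $R(B^{*})=R(A^{*})\dotplus R(B^{*}-A^{*})$.

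For the converse $(2)\Rightarrow(1)$, I would apply Proposition \ref{equiv range sum, kernels} to the pair $(A,\,B-A)$. The assumption $R(B)=R(A)\dotplus R(B-A)$ provides both the range additivity $R(A+(B-A))=R(A)+R(B-A)$ (item (4) of that proposition) and the trivial intersection $R(A)\cap R(B-A)=\{0\}$ that licenses the implication $(4)\Rightarrow(3)$. This delivers $N(A)+N(B-A)=\H$. Applying the same reasoning to $(A^{*},\,B^{*}-A^{*})$ yields $N(A^{*})+N(B^{*}-A^{*})=\H$. At this point, condition (4) of Proposition \ref{equivalencias minus order} is met, so $A\minus B$.

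I do not expect a serious obstacle here: everything reduces to a bookkeeping exercise once the correct pair of operators is fed into Proposition \ref{equiv range sum, kernels}. The only point one must be careful about is not to weaken the hypothesis by passing to closures prematurely in $(2)\Rightarrow(1)$; the strict equality $R(B)=R(A)\dotplus R(B-A)$ (rather than its closure version) is precisely what guarantees that the conditional implication $(4)\Rightarrow(3)$ of Proposition \ref{equiv range sum, kernels} can be invoked, and this is what makes the characterization genuinely algebraic.
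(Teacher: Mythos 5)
Your proof is correct and follows essentially the same route as the paper's: both directions hinge on Proposition~\ref{equiv range sum, kernels} applied to the pairs $(A,B-A)$ and $(A^*,B^*-A^*)$, with the trivial intersections $R(A)\cap R(B-A)=R(A^*)\cap R(B^*-A^*)=\{0\}$ unlocking the conditional implication $(4)\Rightarrow(3)$ there, and then Proposition~\ref{equivalencias minus order} closing the loop. The only cosmetic difference is in $(1)\Rightarrow(2)$, where you get the directness from $R(A)\subseteq R(P)$ and $R(B-A)\subseteq N(P)$ rather than from the closedness of $\roverline{R(A)}\dotplus\roverline{R(B-A)}$; that projection argument already appears verbatim in the paper's proof of Proposition~\ref{equivalencias minus order}(5), so no new ingredient is involved.
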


\begin{proof}
  Suppose that $A\minus B$.  By Proposition~\ref{equivalencias minus
    order}, it follows that $\roverline{R(A)}\dotplus
  \roverline{R(B-A)} $ and $\roverline{R(A^*)} \dotplus
  \roverline{R((B-A)^*)}$ are closed.  In particular, $R(A)\cap
  R(B-A)=R(A^*)\cap R(B^*-A^*)=\{0\}$.  Also, it follows from
  Proposition~\ref{equiv range sum, kernels} that
  $R(B^*)=R(A^*)+R(B^*-A^*)$ and $R(B)=R(A)+R(B-A)$.  Therefore,
  $R(B)=R(A)\dotplus R(B-A)$ and $R(B^*)=R(A^*)\dotplus R(B^*-A^*)$.

  Conversely, suppose that $R(B)=R(A)\dotplus R(B-A)$ and
  $R(B^*)=R(A^*)\dotplus R(B^*-A^*)$.  Applying $(4)\Rightarrow (1)$
  in Proposition~\ref{equiv range sum, kernels}, it follows that
  $\roverline{R(A)}\dotplus \roverline{R(B-A)}$ and
  $\roverline{R(A^*)}\dotplus \roverline{R(B^*-A^*)}$ are closed.
  Hence, by Proposition~\ref{equivalencias minus order} and
  Proposition~\ref{angle c0}, $A\minus B$.
\end{proof}

\medskip

Let $A_i\in L(\H)$ for $1\leq i \leq k$.  Le\v snjak and \v Semrl
\cite{LesSem96} give the following definition: the operator
$A=\overset{k}{\underset{i=1}{\sum}} A_i$ is the \textit{quasidirect
  sum} if the range of $A$ is the direct sum of the ranges of the
$A_i$s and the closure of the range of $A$ is the direct sum of the
closures of the ranges of the $A_i$s.  The next result may be restated
as saying that $B$ is the quasidirect sum of $A$ and $B-A$ if and only
if $A\minus B$.

\begin{corollary}\label{ch3l2} 
  If $A,B\in L(\H)$, the following conditions are equivalent:
  \begin{enumerate}
  \item $A \minus B$;
  \item $R(B)=R(A)\dotplus R(B-A)$ and
    $\roverline{R(B)}=\roverline{R(A)}\dotplus \roverline{R(B-A)}.$
  \end{enumerate}
\end{corollary}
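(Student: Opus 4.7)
The plan is to obtain both directions as short consequences of what has already been proved, essentially repackaging Theorem \ref{equiv minus and RAP} and Proposition \ref{equivalencias minus order} together, with Proposition \ref{char2 RAP} serving as a bridge for the converse. The harder direction is clearly $(2)\Rightarrow(1)$, since one must recover information about $A^{*}$ and $B^{*}-A^{*}$ from data involving only $R(A)$, $R(B-A)$ and their closures.

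For $(1)\Rightarrow(2)$, I would argue as follows. The identity $R(B)=R(A)\dotplus R(B-A)$ is exactly the first half of the conclusion of Theorem \ref{equiv minus and RAP}. For the second, Proposition \ref{equivalencias minus order} (item (3)) gives $\cl{R(B)}=\cl{R(A)}\dotplus\cl{R(B-A)}$ as soon as $A\minus B$.

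For $(2)\Rightarrow(1)$, I would extract two kernel-sum identities and then invoke Proposition \ref{equivalencias minus order}. First, the hypothesis $\cl{R(B)}=\cl{R(A)}\dotplus\cl{R(B-A)}$ forces the direct sum $\cl{R(A)}\dotplus\cl{R(B-A)}$ to be closed (it equals the closed subspace $\cl{R(B)}$), so Proposition \ref{angle c0} yields
\[
\H \;=\; \cl{R(A)}^{\perp}+\cl{R(B-A)}^{\perp}\;=\;N(A^{*})+N(B^{*}-A^{*}).
\]
Second, the hypothesis $R(B)=R(A)\dotplus R(B-A)$ gives in particular $R(A)\cap R(B-A)=\{0\}$, and since $B=A+(B-A)$ we may apply Proposition \ref{char2 RAP} to the pair $A$, $B-A$ and conclude that $\H=N(A)+N(B-A)$. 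Combining the two identities, condition (4) of Proposition \ref{equivalencias minus order} is met, and therefore $A\minus B$.

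The only subtlety worth highlighting is that the hypothesis does not a priori assert $R(A^{*})\cap R(B^{*}-A^{*})=\{0\}$, which is why a direct appeal to Theorem \ref{equiv minus and RAP} is not the cleanest route; passing through the kernel-sum equivalence of Proposition \ref{equivalencias minus order} sidesteps that issue entirely, and it is precisely the closure condition on $R(B)$ that supplies the missing information about $A^{*}$ and $B^{*}-A^{*}$.
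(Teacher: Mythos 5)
Your proof is correct, and the forward direction coincides with the paper's. For the converse you take a genuinely different (and slightly leaner) route. The paper aims at condition (2) of Theorem~\ref{equiv minus and RAP}: it first applies Proposition~\ref{equiv range sum, kernels} to the pair $A^{*}$, $B^{*}-A^{*}$ to obtain $R(B^{*})=R(A^{*})+R(B^{*}-A^{*})$, and then makes a second pass through the same proposition (using $R(B)=R(A)\dotplus R(B-A)$ to produce a projection $Q$ with $A^{*}=QB^{*}$) in order to show that this adjoint sum is direct --- exactly the subtlety you flag at the end. You instead aim at condition (4) of Proposition~\ref{equivalencias minus order}: the closedness of $\cl{R(A)}\dotplus\cl{R(B-A)}=\cl{R(B)}$ gives $\H=N(A^{*})+N(B^{*}-A^{*})$ via Proposition~\ref{angle c0}, and the directness of $R(B)=R(A)\dotplus R(B-A)$ gives $\H=N(A)+N(B-A)$ via Proposition~\ref{char2 RAP}. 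This sidesteps the adjoint range-additivity statement entirely and avoids the directness verification; the only trade-off is that you never record the conclusion $R(B^{*})=R(A^{*})\dotplus R(B^{*}-A^{*})$, which the paper's argument yields along the way. Both proofs are complete and of comparable length.
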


\begin{proof}
  $(1) \Rightarrow (2)$ follows from Proposition~\ref{equivalencias
    minus order} and Theorem~\ref{equiv minus and RAP}.

  For the converse, since $\roverline{R(A)}\dotplus
  \roverline{R(B-A)}$ is closed, from Proposition~\ref{equiv range
    sum, kernels} we have that $R(B^*)=R(A^*)+R(B^*-A^*)$.  To see
  that this sum is direct, applying Proposition~\ref{equiv range sum,
    kernels} again and using the fact that $R(B)=R(A)\dotplus R(B-A)$
  we get that $R(A^*)\cap R(B^*-A^*)=\{ 0 \}$.  Thus
  $R(B^*)=R(A^*)\dotplus R(B^*-A^*)$, and so by Theorem~\ref{equiv
    minus and RAP}, $A\minus B$.
\end{proof}

\medskip

The next result shows the behavior of the minus order when the
operators have closed ranges.

\begin{corollary}
  \label{A minus B and rg B cl then rg A, rg B-A cl}
  Consider $A, B\in L(\H)$ such that $A\minus B$.  Then $R(B)$ is
  closed if and only if $R(A)$ and $R(B-A)$ are closed.
\end{corollary}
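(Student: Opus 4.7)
The plan is to combine Theorem~\ref{equiv minus and RAP} with Corollary~\ref{ch3l2}, since together they give two equations that we can play against each other: the algebraic decomposition $R(B)=R(A)\dotplus R(B-A)$ and the topological decomposition $\cl{R(B)}=\cl{R(A)}\dotplus \cl{R(B-A)}$. Everything in the proof will be a consequence of comparing these two decompositions.

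For the easier direction, I would assume that $R(A)$ and $R(B-A)$ are both closed. Then $\cl{R(A)}=R(A)$ and $\cl{R(B-A)}=R(B-A)$, so Corollary~\ref{ch3l2} gives
\begin{equation*}
  \cl{R(B)}=\cl{R(A)}\dotplus \cl{R(B-A)}=R(A)\dotplus R(B-A)=R(B),
\end{equation*}
using Theorem~\ref{equiv minus and RAP} for the last equality. Hence $R(B)$ is closed.

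For the converse, I would assume $R(B)$ is closed and aim to prove $\cl{R(A)}\subseteq R(A)$ (the argument for $B-A$ is symmetric). From $R(B)=\cl{R(B)}$ together with both decompositions, I get
\begin{equation*}
R(A)\dotplus R(B-A)=\cl{R(A)}\dotplus \cl{R(B-A)}.
\end{equation*}
Given $x\in\cl{R(A)}$, the right-hand side lets me write $x=x+0$, and the left-hand side lets me write $x=a+b$ with $a\in R(A)\subseteq \cl{R(A)}$ and $b\in R(B-A)\subseteq \cl{R(B-A)}$. The key point is that the sum $\cl{R(A)}\dotplus \cl{R(B-A)}$ is direct, so the decomposition of $x$ in this larger sum is unique; comparing $x+0$ with $a+b$ forces $a=x$ and $b=0$, which shows $x\in R(A)$.

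The main subtlety to watch is ensuring that the uniqueness argument is applied in the closed direct sum rather than only in the algebraic direct sum: the elements $a$ and $b$ produced from $R(A)\dotplus R(B-A)$ must be viewed as lying in the (possibly larger) closed subspaces $\cl{R(A)}$ and $\cl{R(B-A)}$ in order to invoke directness there. Once this is set up, no further computation is needed, and the closedness of $R(A)$ and, by the same reasoning applied with the roles of $A$ and $B-A$ interchanged, of $R(B-A)$, both follow immediately.
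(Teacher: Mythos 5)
Your proposal is correct and follows essentially the same route as the paper: both directions rest on playing the algebraic decomposition $R(B)=R(A)\dotplus R(B-A)$ against the closed decomposition $\cl{R(B)}=\cl{R(A)}\dotplus \cl{R(B-A)}$ from Corollary~\ref{ch3l2}, and your uniqueness-of-decomposition argument for the forward direction is exactly the paper's observation that $x-x_1=x_2$ lies in $\cl{R(A)}\cap\cl{R(B-A)}=\{0\}$.
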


\begin{proof}
  If $A\minus B$, then by Corollary~\ref{ch3l2},
  $\roverline{R(B)}=\roverline{R(A)}\dotplus \roverline{R(B-A)}$ and
  $R(B)={R(A)}\dotplus {R(B-A)}$.  If $R(B)$ is closed then
  $\roverline{R(A)}\dotplus \roverline{R(B-A)}={R(A)}\dotplus
  {R(B-A)}$.  Hence $\roverline{R(A)}=R(A)$ and
  $\roverline{R(B-A)}=R(B-A)$.  In fact, given $x\in
  \roverline{R(A)}$, then $x\in R(A)\dotplus R(B-A)$, so that there
  exist $x_1 \in R(A)$ and $x_2\in R(B-A)$ such that $x=x_1+x_2$.  But
  $x-x_1=x_2 \in \roverline{R(A)}\cap \roverline{R(B)}=\{0\}$, and so
  $x=x_1\in R(A)$; that is, $\roverline{R(A)}=R(A)$.  Similarly,
  $\roverline{R(B-A)}=R(B-A)$.  The converse follows by
  Corollary~\ref{ch3l2}.
\end{proof}

\medskip

\subsection{The left  and right minus orders}

In this section we define the \textit{left} and \textit{right minus
  orders} and show that they are a generalization of the left and
right star orders.  As we will see, these orders are really only
interesting on infinite dimensional spaces.  For matrices, they
coincide with the minus order.
 
We begin analyzing the properties of the left and right star orders.
Originally, Drazin~\cite{Dra78} introduced the star order on
semigroups with involutions, Baksalary and Mitra~\cite{BakMit91}
defined the left and right star orders for complex matrices, and
later, Antezana, Cano, Mosconi and Stojanoff~\cite{AntCanMosSto10}
extended the star order to the algebra of bounded operators on a
Hilbert space.  See also Dolinar and Marovt \cite{DolMar11}, Deng
and Wang \cite{DenWan12} and Djiki$\acute{\textrm c}$
\cite{Dji16}.

Given $A, B\in L(\H)$, the \textit{star order}, \textit{left star
  order} and \textit{right star order} are respectively defined by
\begin{itemize}
\item $A\stella B$ if and only if $A^*A=A^*B$ and $AA^*=BA^*$,
\item $A\lstella B$ if and only if $A^*A=A^*B$ and $R(A)\subseteq
  R(B)$, and
\item $A\rstella B$  if and only if $AA^*=BA^*$ and $R(A^*)\subseteq
  R(B^*)$.
\end{itemize}

\medskip

If $A, B\in L(\H)$, then $A\stella B$ if and only if there exist $P,
Q\in \P$ such that $A=PB$ and $A^*=QB^*$ (see
\cite[Proposition~2.3]{AntCanMosSto10} or
\cite[Theorem~5]{DolMar11}).  We can always take $P=P_A$ and
$Q=P_{A^*}$.

\medskip

The next result is a straightforward consequence of
\cite[Theorem~2.1]{DenWan12}.  We include a simple proof.

\begin{proposition}
  \label{left star order entonces minus order}
  Let $A, B\in L(\H)$.  If $A \lstella B$ then $A\minus B$.
\end{proposition}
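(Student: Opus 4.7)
My plan is to reduce the claim to condition (5) of Proposition~\ref{equivalencias minus order}, which says that $A\minus B$ if and only if there exists $P\in\Q$ with $A=PB$ and $R(A)\subseteq R(B)$. The inclusion $R(A)\subseteq R(B)$ is already part of the definition of the left star order, so the only real task is to produce the projection $P$.

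To find $P$, I would first rewrite the identity $A^*A=A^*B$ as $A^*(B-A)=0$. This is equivalent to the inclusion $R(B-A)\subseteq N(A^*)=\overline{R(A)}^{\perp}$, which tells me that the ``defect'' $B-A$ lives orthogonally to $\overline{R(A)}$. This orthogonality practically forces the natural candidate $P=P_A$, the orthogonal projection onto $\overline{R(A)}$, which of course belongs to $\Q$.

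Verifying $A=P_A B$ is then a short decomposition: for any $x\in\H$, write $Bx=Ax+(B-A)x$. The first summand lies in $R(A)\subseteq\overline{R(A)}$, so $P_A Ax=Ax$, while the second lies in $\overline{R(A)}^{\perp}$ by the observation of the previous paragraph, so $P_A(B-A)x=0$. Hence $P_A Bx=Ax$. Combined with $R(A)\subseteq R(B)$, condition (5) of Proposition~\ref{equivalencias minus order} is met, and $A\minus B$ follows.

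I do not foresee a serious obstacle; the argument is little more than an unwinding of definitions, once one notices that the left star condition supplies exactly the orthogonality $R(B-A)\perp R(A)$ needed to make $P_A$ work. The only minor subtlety is being careful with closures: $N(A^*)$ equals $\overline{R(A)}^{\perp}$, not $R(A)^{\perp}$, but these two subspaces coincide so this causes no trouble.
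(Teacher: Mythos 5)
Your argument is correct and is essentially the paper's own proof: both extract $R(B-A)\subseteq N(A^*)=R(A)^{\perp}$ from $A^*A=A^*B$, conclude $A=P_AB$, and then invoke Proposition~\ref{equivalencias minus order}(5) together with $R(A)\subseteq R(B)$. No gaps.
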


\begin{proof}
  If $A \lstella B$ , then $A^*A=A^*B$, or equivalently $A^*(A-B)=0$.
  Hence $P_A(A-B)=0$, or $A=P_AB$.  Conversely, if $A = P_A B$, then
  $A^*A = A^*B$.  So $A\lstella B$ is equivalent to $A=P_AB$ and
  $R(A)\subseteq R(B)$.  By Proposition~\ref{equivalencias minus
    order}(5), this gives $A\minus B$.
\end{proof}

The following results characterize the left and right star orders in
terms of an orthogonal range additivity property.

\begin{proposition}
  \label{*< sii rangos ortog}
  For $A, B\in L(\H)$, $A \lstella B$ if and only if $R(B)=R(A)\Oplus
  R(B-A)$.
\end{proposition}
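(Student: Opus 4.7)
The plan is to exploit the equivalent characterization of the left star order established in the proof of Proposition~\ref{left star order entonces minus order}, namely that $A\lstella B$ iff $A=P_AB$ and $R(A)\subseteq R(B)$. The key observation is that the condition $A=P_AB$ can be rephrased as an orthogonality statement on ranges: indeed, $A=P_AB$ is equivalent to $(I-P_A)(B-A)=B-A$, which says $R(B-A)\subseteq N(P_A)=R(A)^\perp$. This is precisely what distinguishes an orthogonal direct sum from an algebraic one, so the biconditional should fall out cleanly.

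For the forward implication, suppose $A\lstella B$. By Proposition~\ref{left star order entonces minus order}, $A\minus B$, and thus by Theorem~\ref{equiv minus and RAP}, $R(B)=R(A)\dotplus R(B-A)$. It only remains to upgrade this to an orthogonal sum. Using $A=P_AB$, I would write $B-A=(I-P_A)B$, so $R(B-A)\subseteq R(I-P_A)=R(A)^\perp$, giving the desired orthogonality $R(A)\perp R(B-A)$ and hence $R(B)=R(A)\Oplus R(B-A)$.

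For the converse, suppose $R(B)=R(A)\Oplus R(B-A)$. The inclusion $R(A)\subseteq R(B)$ is immediate from the decomposition. From $R(B-A)\subseteq R(A)^\perp\subseteq N(P_A)$, I get $P_A(B-A)=0$; but $P_AA=A$ (since $R(A)\subseteq\cl{R(A)}$), so $P_AB=P_AA=A$. This is equivalent to $A^*A=A^*B$. Combined with $R(A)\subseteq R(B)$, this gives $A\lstella B$.

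There is no real obstacle here; the proof is essentially a translation between two formulations of the same orthogonality condition. The only mild subtlety is to remember that the algebraic range-additivity characterization from Theorem~\ref{equiv minus and RAP} already delivers directness of the sum, so one need not argue $R(A)\cap R(B-A)=\{0\}$ separately in either direction.
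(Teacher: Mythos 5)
Your proof is correct and follows essentially the same route as the paper's: both directions hinge on the equivalence $A\lstella B \Leftrightarrow (A=P_AB \text{ and } R(A)\subseteq R(B))$ from the proof of Proposition~\ref{left star order entonces minus order}, with $A=P_AB$ translating into $R(B-A)\subseteq N(P_A)=R(A)^\perp$. The only cosmetic difference is that in the forward direction you obtain the direct-sum decomposition by detouring through Theorem~\ref{equiv minus and RAP}, whereas the paper gets $R(B)=R(A)+R(B-A)$ directly from $R(A)\subseteq R(B)$ (Lemma~\ref{char1 RAP}); both are valid.
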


\begin{proof}
  From the proof of Proposition~\ref{left star order entonces minus
    order}, $A \lstella B$ if and only if $A=P_AB$ and $R(A)\subseteq
  R(B)$.  Thus $R(B)=R(A)\Oplus R(B-A)$ since $R(B-A)\subseteq
  N(P_A)=R(A)^\perp$.

  Conversely, if $R(B)=R(A)\Oplus R(B-A)$, then $R(A)\subseteq R(B)$
  and $R(B-A) \subseteq R(A)^\perp$, so that $A=P_A B$.  Hence $A
  \lstella B$.
\end{proof}

\begin{corollary}
  \label{<* sii rangos de adjuntos ortog}
  For $A, B\in L(\H)$, $A \rstella B$ if and only if
  $R(B^*)=R(A^*)\Oplus R(B^*-A^*)$.
\end{corollary}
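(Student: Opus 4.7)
The plan is to derive this corollary from Proposition~\ref{*< sii rangos ortog} by a straightforward adjoint duality argument, so no new machinery is needed.

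First I would observe that the right star order is essentially the left star order for adjoints. Starting from the definition $A \rstella B \iff AA^* = BA^*$ and $R(A^*)\subseteq R(B^*)$, I would take adjoints of the first equation to rewrite it as $AA^* = AB^*$, i.e., $(A^*)^*A^* = (A^*)^*B^*$. Combined with the range inclusion $R(A^*) \subseteq R(B^*)$, this is exactly the statement that $A^* \lstella B^*$. So I would record, as the key reformulation,
\begin{equation*}
  A \rstella B \iff A^* \lstella B^*.
\end{equation*}

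Next I would apply Proposition~\ref{*< sii rangos ortog} with $A$ and $B$ replaced by $A^*$ and $B^*$, respectively. That proposition gives
\begin{equation*}
  A^* \lstella B^* \iff R(B^*) = R(A^*) \Oplus R(B^* - A^*),
\end{equation*}
which is precisely the stated characterization of $A \rstella B$.

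There is essentially no obstacle here; the only point to check carefully is that adjoining preserves the two defining clauses of the right star order and turns them into the defining clauses of the left star order, which is a routine computation. All the analytic content (closedness of the orthogonal sum, the role of $P_{A^*}$, etc.) is inherited from the proof of Proposition~\ref{*< sii rangos ortog} via duality.
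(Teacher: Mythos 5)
Your proposal is correct and matches the paper's (implicit) argument: the corollary is stated without proof precisely because it follows by applying Proposition~\ref{*< sii rangos ortog} to $A^*$ and $B^*$, using the observation that $A \rstella B$ is equivalent to $A^* \lstella B^*$. Your adjoint computation verifying this equivalence is accurate, so nothing is missing.
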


The next characterization of the star order follows from the previous
results (or alternatively, from Theorem~\ref{equiv minus and RAP}).

\begin{corollary}
  Given $A, B\in L(\H)$, the following statements are equivalent:
  \begin{enumerate}
  \item $A\stella B$;
  \item $A\lstella B$ and $A\rstella B$;
  \item $ {R(B)}={R(A)} \Oplus {R(B-A)}$ and $ {R(B^*)}={R(A^*)}
    \Oplus{R(B^*-A^*)}$.
  \end{enumerate}
\end{corollary}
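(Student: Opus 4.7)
The plan is to prove the cycle $(1)\Rightarrow(2)\Rightarrow(3)\Rightarrow(1)$, leaning almost entirely on the two immediately preceding results (the characterization of $\lstella$ as $R(B)=R(A)\Oplus R(B-A)$ and the dual characterization of $\rstella$), plus the observation already used in the proof of Proposition~\ref{left star order entonces minus order} that $A^{*}A=A^{*}B$ is equivalent to $A=P_{A}B$.

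The equivalence $(2)\Leftrightarrow(3)$ is essentially a restatement. By Proposition~\ref{*< sii rangos ortog}, $A\lstella B$ is equivalent to $R(B)=R(A)\Oplus R(B-A)$, and by Corollary~\ref{<* sii rangos de adjuntos ortog}, $A\rstella B$ is equivalent to $R(B^{*})=R(A^{*})\Oplus R(B^{*}-A^{*})$. Conjoining the two equivalences gives $(2)\Leftrightarrow(3)$ for free.

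The implication $(2)\Rightarrow(1)$ is immediate from the definitions: $A\lstella B$ delivers $A^{*}A=A^{*}B$ and $A\rstella B$ delivers $AA^{*}=BA^{*}$, which together are precisely $A\stella B$. For $(1)\Rightarrow(2)$ the subtlety is that the definition of $\stella$ does not explicitly contain the range inclusions $R(A)\subseteq R(B)$ and $R(A^{*})\subseteq R(B^{*})$ appearing in the definitions of $\lstella$ and $\rstella$, so these must be extracted. From $A^{*}A=A^{*}B$ one obtains $A=P_{A}B$, whence, taking adjoints, $A^{*}=B^{*}P_{A}$ and hence $R(A^{*})\subseteq R(B^{*})$; symmetrically, $AA^{*}=BA^{*}$ yields $A^{*}=P_{A^{*}}B^{*}$, so $A=BP_{A^{*}}$ and $R(A)\subseteq R(B)$. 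Combining these inclusions with the two star identities gives $A\lstella B$ and $A\rstella B$.

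There is really no substantive obstacle here; the only non-bookkeeping step is the adjoint manipulation in $(1)\Rightarrow(2)$, and even that has already been carried out earlier in the section. As the statement of the corollary remarks, one could alternatively deduce $(3)\Rightarrow(1)$ by invoking Theorem~\ref{equiv minus and RAP} to get $A\minus B$ and then upgrading the oblique projections produced there to the orthogonal projections $P_{A}$ and $P_{A^{*}}$ using the orthogonality of the direct sums in (3); the route through Proposition~\ref{*< sii rangos ortog} and Corollary~\ref{<* sii rangos de adjuntos ortog} is shorter.
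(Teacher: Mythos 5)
Your proof is correct and follows essentially the same route as the paper's: the equivalence $(2)\Leftrightarrow(3)$ is read off from Proposition~\ref{*< sii rangos ortog} and Corollary~\ref{<* sii rangos de adjuntos ortog}, $(2)\Rightarrow(1)$ is definitional, and $(1)\Rightarrow(2)$ extracts the range inclusions by writing $A=P_AB$ and $A^*=P_{A^*}B^*$ and taking adjoints, exactly as in the paper. No substantive difference.
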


\begin{proof}
  Obviously, $A\lstella B$ and $A\rstella B$.  On the other hand, if
  $A\stella B$, then by the proof of Proposition~\ref{left star order
    entonces minus order}, $A = P_A B$ and $A^* = P_{A_\rs^*}B^*$.
  Hence $R(A^*) \subseteq R(B^*)$ and $R(A) \subseteq R(B)$.  Thus
  $(1) \Leftrightarrow (2)$.  The equivalence of these to $(3)$
  follows from Proposition~\ref{*< sii rangos ortog}.
\end{proof} 

\medskip

As a generalization of the left and right star orders, we now define
the \emph{left} and \emph{right minus orders}.

\begin{definition}
  For $A, B\in L(\H)$,
  \begin{itemize}
  \item $A \lminus B$ if and only if $R(B)=R(A)\dotplus R(B-A)$, and
  \item $A \rminus B$ if and only if $R(B^*)=R(A^*)\dotplus
    R(B^*-A^*)$.
  \end{itemize}
\end{definition}

\begin{proposition}
  The relations $\lminus$ and $\rminus$ define partial orders.
\end{proposition}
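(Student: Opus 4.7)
The plan is to prove the statement for $\lminus$; the claim for $\rminus$ then follows immediately by applying the result to adjoints, since $A \rminus B$ if and only if $A^* \lminus B^*$ and $T \mapsto T^*$ is a bijection of $L(\H)$.

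The key preparatory step, which will drive all three axioms, is a reformulation via Lemma~\ref{char1 RAP}: applying that lemma to $A$ and $B-A$, the equality $R(B) = R(A) + R(B-A)$ is equivalent to $R(A) \subseteq R(B)$. Consequently, $A \lminus B$ is equivalent to the conjunction
$$
R(A) \subseteq R(B) \quad\text{and}\quad R(A) \cap R(B-A) = \{0\}.
$$
Every subsequent argument will be a short manipulation of these two conditions.

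Reflexivity is immediate since $B - A = 0$ when $B = A$. For antisymmetry, if $A \lminus B$ and $B \lminus A$, then the two inclusions combine to give $R(A) = R(B)$; in particular $R(B-A) \subseteq R(A) + R(B) = R(A)$, and the trivial-intersection hypothesis $R(A) \cap R(B-A) = \{0\}$ then forces $R(B-A) = \{0\}$, so $A = B$.

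Transitivity is the main obstacle. Given $A \lminus B$ and $B \lminus C$, the chain $R(A) \subseteq R(B) \subseteq R(C)$ handles the containment; the work is in establishing $R(A) \cap R(C-A) = \{0\}$. The strategy is to take $u = Ax = (C-A)y$ in this intersection and rewrite it as $Cy = A(x+y)$. Since $A(x+y) \in R(A) \subseteq R(B)$, the decomposition $Cy = By + (C-B)y$ forces $(C-B)y \in R(B)$; the hypothesis $R(B) \cap R(C-B) = \{0\}$ then kills it, giving $By = A(x+y)$, which rearranges to $(B-A)y = Ax = u$. Thus $u$ sits simultaneously in $R(A)$ and $R(B-A)$, and the second trivial-intersection hypothesis finishes the job. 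The delicate move is precisely this extraction of a preimage witnessing $u \in R(B-A)$, which is where the hypothesis $A \lminus B$ finally gets used, and where one must resist the temptation to chase ranges without keeping track of specific preimages.
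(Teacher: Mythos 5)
Your proof is correct and follows essentially the same route as the paper's: reduce $\rminus$ to $\lminus$ by adjoints, use Lemma~\ref{char1 RAP} to handle the non-direct sum, and for transitivity split an element of $R(A)\cap R(C-A)$ via $C-A=(C-B)+(B-A)$, killing the $R(C-B)$ piece against $R(B)$ and then the $R(B-A)$ piece against $R(A)$. The only cosmetic difference is that the paper works with an arbitrary decomposition $x=x_1+x_2$, $x_1\in R(C-B)$, $x_2\in R(B-A)$ rather than tracking a common preimage $y$, which shows the preimage bookkeeping you flag as delicate is not actually needed.
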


\begin{proof}
  We only give the proof for $\lminus$, since the proof for $\rminus$
  is identical.

  First of all, $\lminus$ is clearly reflexive.  So consider $A, B\in
  L(\H)$ such that $A\lminus B$ and $B \lminus A$.  Then
  $R(B)=R(A)\dotplus R(B-A)$ and $R(A)=R(B)\dotplus R(B-A)$.  From the
  last equality $R(B-A)\subseteq R(A)$.  But $R(B-A)\cap R(A)=\{0\}$,
  so that $R(B-A)=\{0\}$.  Therefore $A=B$ thus $\lminus$ is
  antisymmetric.

  To prove $\lminus$ is transitive, consider $A,B, C\in L(\H)$ such
  that $A \lminus B$ and $B\lminus C$.  Then $R(B)=R(A)\dotplus
  R(B-A)$ and $R(C)=R(B)\dotplus R(C-B)$.  Since $R(A)\subseteq
  R(B)\subseteq R(C)$, by Lemma~\ref{char1 RAP}, $R(C)=R(A)+R(C-A)$.
  It remains to show that $R(A)\cap R(C-A)=\{0\}$.  Since $R(A)\cap
  R(C-A)\subseteq R(A)\cap (R(C-B)+R(B-A))$ we can write $x\in R(A)$
  as $x=x_1+x_2, \, x_1\in R(C-B)$ and $x_2\in R(B-A)$.  Then
  $x-x_2=x_1\in R(B)\cap R(C-B)=\{0\}$, and so $x=x_2$.  Hence $x\in
  R(A)\cap R(B-A)=\{0\}$; that is $x=0$ and $R(A)\cap R(C-A)=\{0\}$.
  This implies that $R(C)=R(A)\dotplus R(C-A)$, or equivalently,
  $A\lminus C$, and so $\lminus$ is transitive.
\end{proof}

\medskip

The next corollary is a consequence of Theorem~\ref{equiv minus and
  RAP}.

\begin{corollary}
  For $A, B\in L(\H)$, $A\lminus B$ and $A \rminus B$ if and only if
  $A \minus B$.
\end{corollary}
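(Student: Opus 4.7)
The plan is to observe that this corollary is essentially a direct unpacking of Theorem~\ref{equiv minus and RAP}, via the very definitions of $\lminus$ and $\rminus$. The proof is short and there is no genuine obstacle, so the task is mainly to present it cleanly.

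First, I would note that, by definition, $A \lminus B$ means precisely $R(B) = R(A) \dotplus R(B-A)$, and $A \rminus B$ means precisely $R(B^*) = R(A^*) \dotplus R(B^*-A^*)$. The conjunction of these two statements is exactly condition (2) in Theorem~\ref{equiv minus and RAP}.

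Next, I would invoke Theorem~\ref{equiv minus and RAP} directly: that theorem states that (2) is equivalent to $A \minus B$. Thus the conjunction $A \lminus B$ and $A \rminus B$ is equivalent to $A \minus B$, which is exactly what is claimed. No independent argument is required; the only thing worth highlighting, for the reader, is which half of each direction is used. In the ``only if'' direction, one simply rewrites the hypotheses $A\lminus B$ and $A\rminus B$ as the two range-additivity conditions and applies the implication $(2)\Rightarrow(1)$ of Theorem~\ref{equiv minus and RAP}. In the ``if'' direction, one applies the implication $(1)\Rightarrow(2)$ and then reads off each of the two range-additivity equalities as $A\lminus B$ and $A\rminus B$ respectively.

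Since the work is already packaged in Theorem~\ref{equiv minus and RAP}, there is no main obstacle to isolate; the proof is essentially a two-line citation. I would keep the write-up to one short paragraph that makes the identification of definitions explicit, so that the reader sees immediately that $\lminus$ and $\rminus$ split the minus order into its ``range'' and ``range of the adjoint'' halves in a natural way.
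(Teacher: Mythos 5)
Your proposal is correct and is exactly the paper's argument: the paper states the corollary as an immediate consequence of Theorem~\ref{equiv minus and RAP}, since the conjunction of the definitions of $\lminus$ and $\rminus$ is literally condition (2) of that theorem. Nothing further is needed.
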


It follows from Proposition~\ref{equiv range sum, kernels} that $A
\lminus B$ if and only if $A^*=QB^*$ for $Q\in \Q$ and $R(A)\cap
R(B-A)=\{0\}$.  There is also a characterization of the left minus
order similar to that of the left star order as found in the proof of
Proposition~\ref{left star order entonces minus order}.  We leave the
obvious version for the right minus order unstated.

\begin{proposition}\label{leftminus and projections}
  For $A, B\in L(\H)$, $A \lminus B$ if and only if there exists a
  (possibly unbounded) densely defined projection $P$ such that $A=PB$
  and $R(A)\subseteq R(B)$.
\end{proposition}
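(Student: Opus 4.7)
The plan is to prove each direction directly from the defining identity $A \lminus B \iff R(B) = R(A) \dotplus R(B-A)$, invoking Lemma~\ref{char1 RAP} only for a single step in the converse.

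For the forward direction, the very decomposition $R(B) = R(A) \dotplus R(B-A)$ supplies an idempotent $P_0$ on $R(B)$ by $P_0(u+v) = u$ for $u\in R(A)$, $v\in R(B-A)$. The identity $A = P_0 B$ follows at once: for any $x\in \H$, $Bx = Ax + (B-A)x$ already splits $Bx$ along the direct sum, so $P_0(Bx) = Ax$. To obtain a \emph{densely defined} $P$ on all of $\H$, I would extend $P_0$ to $D(P) = R(B) \dotplus \cl{R(B)}^\perp$ by letting it vanish on $\cl{R(B)}^\perp$; the relation $R(B)\cap \cl{R(B)}^\perp = \{0\}$ makes this well defined, $D(P)$ is dense because its closure contains $\cl{R(B)} \Oplus \cl{R(B)}^\perp = \H$, and $D(P)$ is visibly $P$-invariant with $P^2 = P$. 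The inclusion $R(A) \subseteq R(B)$ is immediate from the direct sum.

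For the converse, assume $A = PB$ for some densely defined projection $P$ with $R(A)\subseteq R(B)$. Lemma~\ref{char1 RAP} immediately yields $R(B) = R(A) + R(B-A)$, so only the directness $R(A)\cap R(B-A) = \{0\}$ remains. Pick $y \in R(A)\cap R(B-A)$ and write $y = Ax = (B-A)z$. Since $A = PB$ forces $R(B)\subseteq D(P)$ and $R(A)\subseteq R(P)\subseteq D(P)$, both expressions for $y$ lie in $D(P)$. From $y = PBx$ and $P^2 = P$ one gets $Py = y$; from $y = Bz - Az$, together with $P(Bz) = Az$ and $P(Az) = Az$, one gets $Py = 0$. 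Hence $y = 0$.

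The only real subtlety is the domain bookkeeping for the possibly unbounded $P$: one must certify that $R(A)$, $R(B)$, and therefore $R(B-A)$ all sit in $D(P)$ before splitting $P(Bz - Az) = P(Bz) - P(Az)$, and one must exploit the fact that a densely defined projection satisfies $P|_{R(P)} = I$. Once these points are in order, the proof is essentially a direct translation of the direct-sum condition into the existence of $P$, with no deeper obstacle expected.
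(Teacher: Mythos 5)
Your proof is correct and follows essentially the same route as the paper: the projection you build (range $R(A)$, null space $R(B-A)$ plus $\cl{R(B)}^{\perp}=N(B^*)$, with dense domain $R(B)\dotplus \cl{R(B)}^{\perp}$) is exactly the paper's $P_{R(A)//R(B-A)\Oplus N(B^*)}$, and your converse is the paper's one-line argument via Lemma~\ref{char1 RAP} together with $R(A)\subseteq R(P)$ and $R(B-A)\subseteq N(P)$, just with the domain bookkeeping written out. No gaps.
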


\begin{proof}
  If $A \lminus B$ then $ R(B)=R(A)\dotplus R(B-A)$ so that
  $R(A)\subseteq R(B)$.  Define $P=P_{R(A)//{R(B-A)\Oplus
      N(B_\rs^*)}}$.  Then $P$ is a densely defined projection and it
  is easy to check that $A=PB$.  Conversely, if $A=PB$ for a densely
  defined projection and $R(A)\subseteq R(B)$ then $R(B)=R(A)+R(B-A)$
  by Lemma~\ref{char1 RAP}, and the sum is direct since $R(A)\subseteq
  R(P)$ and $R(B-A)\subseteq N(P)$.
\end{proof}

\begin{remark}\label{star and minus}
  The minus order can be seen as a star order after applying suitable
  weights to the Hilbert spaces involved.  Recall that, if $A,B\in
  L(\H, \K)$ are such that $A\minus B$ then there exist projections
  $P\in L(\K)$ and $Q\in L(\H)$ such that $A=PB=BQ$.  The operators
  $W_1=Q^*Q+(I-Q^*)(I-Q)\in L(\H)$ and $W_2=P^*P+(I-P^*)(I-P)\in
  L(\K)$ are positive and invertible.  Hence the inner products in
  $\H$ and $\K$ respectively,
  \begin{equation*}
    \ip{ x}{ y}_{W_1}= \ip{W_1 x}{ y}, \textrm{ for } x, y \in \H \quad
    \textrm{ and } \quad \ip{ z}{ w }_{W_2}= \ip{W_2 z}{ w}, \textrm{
      for } z, w \in \K
  \end{equation*}
  give rise to equivalent norms.  With these new inner products, the
  projections $P$ and $Q$ are orthogonal in $\K_{W_2}=(\K,
  \ip{\cdot}{\cdot}_{W_2})$ and $\H_{W_1}=(\H,
  \ip{\cdot}{\cdot}_{W_1})$, respectively, and so $A\stella B$.

  On the other hand, $A\lminus B$ if and only if there exists a
  densely defined projection $P$ such that $A=PB$ and $R(A)\subseteq
  R(B)$.  In this case, it is possible to find a positive and
  invertible weight $W_2$ on $\K$ such that $P$ is symmetric with
  respect to $\ip{\cdot}{\cdot}_{W_2}$ (or equivalently $A \lstella B$
  in $L(\H, \K_{W_2})$) if and only if $P$ admits a bounded extension
  $\tilde{P}\in \Q$ (or equivalently $A\minus B$).

  Here is a proof of the last statement: suppose that there exists a
  weight $W_2$ on $\K$ positive and invertible such that $P$ is
  symmetric with respect to $\ip{\cdot}{\cdot}_{W_2}$.  Since $P$ is a
  (densely defined) idempotent then $\D(P)=R(P)\dotplus N(P)$, where
  $\D(P)$ is the domain of $P$.  Moreover, given $x\in R(P)$ and $y\in
  N(P)$ we have $ \ip{x }{y }_{W_2}= \ip{Px }{y }_{W_2}= \ip{x }{Py
  }_{W_2}=0$ because $P$ is symmetric with respect to
  $\ip{\cdot}{\cdot}_{W_2}$ and $y\in N(P)$.  Hence
  $\D(P)=R(P)\Oplus_{W_2} N(P)$, and consequently
  $\H=\roverline{R(P)}\Oplus_{W_2}\roverline{N(P)}$, where the
  closures are taken with respect to $\ip{\cdot}{\cdot}_{W_2}$.  Then
  $\roverline{P}=P_{\hs\roverline{R(P)}//\roverline{N(P)}}$ is a
  bounded extension of $P$.

  Conversely, suppose that there exists $\tilde{P}\in \Q$ such that
  $P\subseteq \tilde{P}$, and let
  $W_2=\tilde{P}^*\tilde{P}+(I-\tilde{P})^*(I-\tilde{P})$, which is
  positive and invertible and satisfies $W_2\tilde{P}=\tilde{P}^*W_2$.
  Finally, $P$ is symmetric with respect to $\ip{\cdot}{\cdot}_{W_2}$.
  In fact, if $x, y\in \D(P)$ then $ \ip{Px
  }{y}_{W_2}=\ip{\tilde{P}x}{y}_{W_2} = \ip{W_2\tilde{P}x}{y} =
  \ip{x}{W_2\tilde{P}y} = \ip{W_2x}{\tilde{P}y} =
  \ip{x}{\tilde{P}y}_{W_2} = \ip{x}{Py}_{W_2}$.
\end{remark}
\medskip

\begin{corollary}
  \label{djv7l1}
  Let $A,B\in L(\H)$ be such that $A \lminus B$.  Then
  $\cl{R(B^*)}=\cl{R(A^*)}\dotplus \cl{R(B^*-A^*)}$.
\end{corollary}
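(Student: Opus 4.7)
The plan is to deduce the corollary from the hypothesis $A\lminus B$, which by definition means $R(B)=R(A)\dotplus R(B-A)$. In particular we know two key things: the sum is a range-additivity identity $R(A+(B-A))=R(A)+R(B-A)$, and $R(A)\cap R(B-A)=\{0\}$. These are exactly the data needed to feed into Proposition~\ref{equiv range sum, kernels} applied to the pair $(A,B-A)$.

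First, I would exploit the trivial intersection $R(A)\cap R(B-A)=\{0\}$ to identify the null space of $B$: I claim $N(B)=N(A)\cap N(B-A)$. The inclusion $\supseteq$ is immediate since $B=A+(B-A)$. For the reverse, if $Bx=0$ then $Ax=-(B-A)x\in R(A)\cap R(B-A)=\{0\}$, whence both $Ax=0$ and $(B-A)x=0$.

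Next, I would invoke Proposition~\ref{equiv range sum, kernels} on the pair $(A,B-A)$. Item $(4)$ of that proposition holds by hypothesis, and the extra assumption $R(A)\cap R(B-A)=\{0\}$ is also satisfied, so the conditional direction $(4)\Rightarrow(3)$ applies and yields $N(A)+N(B-A)=\H$. By the unconditional biconditional $(3)\Leftrightarrow(1)$, it follows that $\cl{R(A^*)}\dotplus\cl{R(B^*-A^*)}$ is closed.

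Finally, I would combine these two steps by passing to orthogonal complements. Since $N(B)=N(A)\cap N(B-A)$ and the complement of an intersection of closed subspaces is the closure of the sum of the complements,
\begin{equation*}
\cl{R(B^*)} = N(B)^\perp = \bigl(N(A)\cap N(B-A)\bigr)^\perp = \cl{\,\cl{R(A^*)}+\cl{R(B^*-A^*)}\,}.
\end{equation*}
The outer closure is redundant by the second step, and the sum is already direct, so we conclude $\cl{R(B^*)}=\cl{R(A^*)}\dotplus\cl{R(B^*-A^*)}$, as desired. The main (mild) obstacle is simply keeping track of which direction uses range additivity versus trivial intersection; the hypothesis $A\lminus B$ provides both simultaneously, which is what makes the argument work.
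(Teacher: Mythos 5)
Your proof is correct and takes essentially the same route as the paper's: both arguments pivot on applying Proposition~\ref{equiv range sum, kernels} to the pair $(A,B-A)$ (using $R(B)=R(A)\dotplus R(B-A)$ together with the trivial range intersection) to conclude that $\cl{R(A^*)}\dotplus\cl{R(B^*-A^*)}$ is closed, and then identify this subspace with $\cl{R(B^*)}$. The only cosmetic difference is in that last identification, which you carry out by computing $N(B)=N(A)\cap N(B-A)$ exactly and passing to orthogonal complements via $(\M\cap\N)^\perp=\cl{\M^\perp+\N^\perp}$, whereas the paper argues by a double inclusion; both are sound.
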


\begin{proof}
  From Proposition~\ref{leftminus and projections}(3), if $A\lminus
  B$, then $A=PB$ and $N(B)\subseteq N(A)$ or
  $\roverline{R(A^*)}\subseteq \roverline{R(B^*)}$ and in the same
  way, $\roverline{R(B^*-A^*)}\subseteq \roverline{R(B^*)}$.  Then by
  Proposition~\ref{equiv range sum, kernels}(1),
  $\roverline{R(A^*)}\dotplus \roverline{R(B^*-A^*)}\subseteq
  \roverline{R(B^*)}$.  On the other hand, $R(B^*)\subseteq
  R(A^*)+R(B^*-A^*)\subseteq \roverline{R(A^*)}\dotplus
  \roverline{R(B^*-A^*)}$.  Hence
  \begin{equation*}
    R(B^*)\subseteq \roverline{R(A^*)}\dotplus
    \roverline{R(B^*-A^*)}\subseteq \roverline{R(B^*)}.
  \end{equation*}
  But $\roverline{R(A^*)}\dotplus \roverline{R(B^*-A^*)}$ is closed by
  Proposition~\ref{equiv range sum, kernels}.  Therefore,
  $\cl{R(B^*)}=\cl{R(A^*)}\dotplus \cl{R(B^*-A^*)}$.
\end{proof}

\begin{corollary}
  \label{leftminus then minus}
  Let $A, B\in L(\H)$ such that $A \lminus B$.  If $R(B)$ is closed
  then $R(A)$ and $R(B-A)$ are closed and $A\minus B$.
\end{corollary}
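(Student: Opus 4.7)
The plan is to use Corollary \ref{djv7l1} to upgrade the one-sided range decomposition to a two-sided one once $R(B)$ is closed, apply Theorem~\ref{equiv minus and RAP} to conclude $A \minus B$, and then read off the closedness of $R(A)$ and $R(B-A)$ from Corollary~\ref{A minus B and rg B cl then rg A, rg B-A cl}.

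In detail, first I would observe that since $R(B)$ is closed, so is $R(B^*)$. Corollary~\ref{djv7l1} gives
\begin{equation*}
  R(B^*)=\cl{R(B^*)}=\cl{R(A^*)}\dotplus \cl{R(B^*-A^*)}.
\end{equation*}
Next I would run a sandwich argument: because $R(A^*)\subseteq \cl{R(A^*)}$ and $R(B^*-A^*)\subseteq \cl{R(B^*-A^*)}$, and the closures sit inside $R(B^*)$,
\begin{equation*}
  R(B^*)\subseteq R(A^*)+R(B^*-A^*)\subseteq \cl{R(A^*)}+\cl{R(B^*-A^*)}=R(B^*),
\end{equation*}
so $R(B^*)=R(A^*)+R(B^*-A^*)$. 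Directness is inherited from $\cl{R(A^*)}\cap \cl{R(B^*-A^*)}=\{0\}$, yielding $R(B^*)=R(A^*)\dotplus R(B^*-A^*)$.

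Combined with the defining identity $R(B)=R(A)\dotplus R(B-A)$ of $A\lminus B$, Theorem~\ref{equiv minus and RAP} now delivers $A\minus B$. Finally, Corollary~\ref{A minus B and rg B cl then rg A, rg B-A cl} applied to $A\minus B$ and the closedness of $R(B)$ gives that both $R(A)$ and $R(B-A)$ are closed.

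The only slightly delicate step is the sandwich argument, since in general one cannot pass from a direct-sum decomposition of the closure to a direct-sum decomposition of the range without additional input; here the input is precisely that $R(B^*)$ is closed, which is why the hypothesis on $R(B)$ is what makes the left minus order collapse to the minus order. Everything else is a direct invocation of results already available in the excerpt.
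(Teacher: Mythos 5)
Your proof is correct and follows essentially the same route as the paper's: both pass to adjoints via Corollary~\ref{djv7l1}, use the closedness of $R(B^*)$ in a sandwich argument to get $R(B^*)=R(A^*)\dotplus R(B^*-A^*)$, and invoke Theorem~\ref{equiv minus and RAP}. The only cosmetic difference is that the paper reads off the closedness of $R(A^*)$ and $R(B^*-A^*)$ directly from the equality $\cl{R(A^*)}\dotplus\cl{R(B^*-A^*)}=R(A^*)\dotplus R(B^*-A^*)$, whereas you obtain the closedness of $R(A)$ and $R(B-A)$ afterwards by citing Corollary~\ref{A minus B and rg B cl then rg A, rg B-A cl}; both are valid.
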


\begin{proof}
  Since $A \lminus B$, by Corollary~\ref{djv7l1},
  $\cl{R(B^*)}=\cl{R(A^*)}\dotplus \cl{R(B^*-A^*)}$.  If $R(B)$ is
  closed, then $R(B^*)$ is closed and
  \begin{equation*}
    \roverline{R(A^*)}\dotplus
    \roverline{R(B^*-A^*)}=\roverline{R(B^*)}=R(B^*)\subseteq
    R(A^*)\dotplus R(B^*-A^*).
  \end{equation*}
  Therefore $\roverline{R(A^*)}\dotplus \roverline{R(B^*-A^*)}=
  R(A^*)\dotplus R(B^*-A^*).$ This implies that
  $\roverline{R(A^*)}=R(A^*)$ and $\roverline{R(B^*-A^*)}=R(B^*-A^*)$
  and $A\minus B$.
\end{proof}

The above corollary shows that, unlike the left (right) star order,
the left (right) minus order coincides with the minus order when
applied to matrices.  However for operators these orders are not the
same.

\begin{example}[See also \cite{BakSemSty96}]
  \label{djv7ex1}
  Let $A\in L(\H)$ be an operator such that $R(A)\not = \cl{R(A)}$ and
  that there exists $x\in \cl{R(A)}\setminus R(A)$ which is not
  orthogonal to $N(A)$.  For example, consider $\H=l^2(\mathbb{N})$
  the space of all square-summable sequences, operator $A$ defined as
  $A: (x_n)_{n\in \mathbb{N}} \mapsto ((1/n)x_{n+1})_{n\in
    \mathbb{N}}$, and take $x$ to be $x=(1/n)_{n\in \mathbb{N}}$.
  Define operator $B$ as $B=A+P_x$, where $P_x$ is the orthogonal
  projection onto the one-dimensional subspace spanned by $\{x\}$.
  Since $N(A)\not \subseteq N(P_x)$, and $N(P_x)$ is of co-dimension
  one, we have $\H=N(A)+N(P_x)$, which according to
  Proposition~\ref{equiv range sum, kernels} shows that $A$ and $P_x$
  are range-additive; that is, $R(A)+R(B-A)=R(B)$.  We also have
  $R(A)\cap R(B-A)=\{0\}$ showing that $A\lminus B$.  On the other
  hand, $\cl{R(A)}\cap \cl{R(B-A)}\not = \{0\}$ so $A\minus B$ does
  not hold.
\end{example}

Applying Theorem~\ref{equiv minus and RAP} it is possible to define
the minus order in terms of the inner generalized inverses of the
operators involved.  By an inner inverse of an operator $A\in
L(\H,\K)$ we mean a densely defined operator $A^{-}: \mathcal
D(A^-)\subseteq \K \to \H$ satisfying $R(A)\subseteq \mathcal D(A^-)$
and $AA^{-} A= A$.

\begin{proposition}
  For $A,B\in L(\H)$, the following conditions are equivalent:
  \begin{enumerate}
  \item $A \lminus B$;
  \item there exists an inner inverse $A^-$ of $A$ such that
    $A^-A=A^-B$ and $AA^-x=BA^-x$ for every $x\in \D(A^-)$.
  \end{enumerate}
\end{proposition}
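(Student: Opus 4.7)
The plan is to prove both implications directly, using Proposition~\ref{leftminus and projections} for the forward direction and Lemma~\ref{char1 RAP} for the converse.

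For $(1)\Rightarrow (2)$, suppose $A\lminus B$. By Proposition~\ref{leftminus and projections} there is a densely defined projection $P$ with $A=PB$, $R(P)=R(A)$, $N(P)\supseteq R(B-A)$, and $\D(P)=R(B)\Oplus R(B)^{\perp}$. My candidate for the inner inverse is
\begin{equation*}
  A^{-}:=B^{\dagger}P,\qquad \D(A^{-}):=\D(P),
\end{equation*}
with $B^{\dagger}$ the densely defined Moore--Penrose inverse of $B$. This composition is well defined because $R(P)=R(A)\subseteq R(B)\subseteq\D(B^{\dagger})$, and clearly $R(A)\subseteq\D(A^{-})$. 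The three required identities then follow from $A=PB$, $P^{2}=P$ and $BB^{\dagger}y=y$ for $y\in R(B)$. For $x\in\H$, since $Ax\in R(P)$ we have $A^{-}Ax=B^{\dagger}PAx=B^{\dagger}Ax$, so $AA^{-}Ax=PBB^{\dagger}Ax=PAx=Ax$; also $A^{-}Bx=B^{\dagger}PBx=B^{\dagger}Ax=A^{-}Ax$; and for $y\in\D(A^{-})$, $Py\in R(A)\subseteq R(B)$, giving $BA^{-}y=BB^{\dagger}Py=Py$ and $AA^{-}y=PBB^{\dagger}Py=P^{2}y=Py$.

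For $(2)\Rightarrow (1)$, assume such an $A^{-}$ exists. For every $y\in\H$, $Ay\in R(A)\subseteq\D(A^{-})$, so the identity $AA^{-}x=BA^{-}x$ at $x=Ay$, combined with $AA^{-}A=A$, gives $Ay=BA^{-}Ay\in R(B)$; hence $R(A)\subseteq R(B)$, and Lemma~\ref{char1 RAP} yields $R(B)=R(A)+R(B-A)$. To see that the sum is direct, take $y\in R(A)\cap R(B-A)$ and write $y=Ax=(B-A)z$. Then $A^{-}y=A^{-}Ax$ on the one hand, and by linearity together with $A^{-}A=A^{-}B$, $A^{-}y=A^{-}Bz-A^{-}Az=0$ on the other. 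Hence $A^{-}Ax=0$, and the inner inverse identity gives $y=Ax=AA^{-}Ax=A\cdot 0=0$. Therefore $R(B)=R(A)\dotplus R(B-A)$, that is, $A\lminus B$.

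The main delicate issue is domain bookkeeping in the forward direction: both $B^{\dagger}$ (when $R(B)$ is not closed) and $P$ are only densely defined, so at each step one must verify that the intermediate vectors lie in the relevant domain of composition. The converse, once $R(A)\subseteq R(B)$ has been extracted from the two identities, is a straightforward manipulation.
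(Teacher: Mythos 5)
Your proof is correct, but the forward direction takes a genuinely different route from the paper's. For $(1)\Rightarrow(2)$ the paper never mentions $B^{\dagger}$: it uses $N(A)+N(B-A)=\H$ (via Proposition~\ref{equiv range sum, kernels}) to pick the complement $\M=N(B-A)\ominus N(A)$ of $N(A)$, defines $A^-$ as $(A|_{\M})^{-1}$ on $R(A)$ and as zero on $R(B-A)\dotplus\N$ for a complement $\N$ of $\cl{R(B)}$, and then both identities drop out instantly from the inclusions $R(A^-)\subseteq N(A-B)$ and $R(A-B)\subseteq N(A^-)$. Your construction $A^-=B^{\dagger}P$, built on Proposition~\ref{leftminus and projections}, is arguably slicker: it gives a closed-form expression, the domain bookkeeping is genuinely fine (every intermediate vector lands in $R(A)\subseteq R(B)=\D(B^\dagger)\cap$ the relevant domain, and $\D(P)=R(B)\Oplus R(B)^{\perp}=\D(B^{\dagger})$), and one can check $R(A^-)\subseteq N(B-A)\cap N(B)^{\perp}$, so it produces an inner inverse of the same structural type as the paper's. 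What the paper's version buys is independence from the Moore--Penrose machinery and a verification that is purely set-theoretic (range and kernel inclusions) rather than computational. Your converse is essentially identical to the paper's: extract $R(A)\subseteq R(B)$ from $AA^-Ay=BA^-Ay$, apply Lemma~\ref{char1 RAP}, and get directness from $R(A-B)\subseteq N(A^-)$ together with $N(A^-)\cap R(A)=\{0\}$; your intersection argument is just an unwound form of that last statement. Note only that, as in the paper, the identity $A^-A=A^-B$ must be read as implicitly asserting $R(B)\subseteq\D(A^-)$ for the splitting $A^-(B-A)z=A^-Bz-A^-Az$ to be legitimate.
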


\begin{proof}
  Suppose that $A \lminus B$.  If $\mathcal N$ is a complement of
  $\roverline{R(B)}$, then $\H=\roverline{ R(A)\dotplus R(B-A)}
  \dotplus \mathcal N$.  From Proposition~\ref{equiv range sum,
    kernels} we know that $N(A) + N(B-A)=\H$; so if $\M =
  N(B-A)\ominus N(A)$, then $\H=N(A)\dotplus \M$.  Let $A_1$ be the
  restriction of $A$ to $\M$, and define $A^{-}$ as $A_1^{-1}$ on
  $R(A)$, and as the null operator on $R(B-A)\dotplus \mathcal N$.
  Then $A^-$ is densely defined and the domain of $A^-$ is
  $\D(A^-)=R(B)\dotplus \N$.  In this case, $(A-B)A^{-}x=0$ for every
  $x\in \D(A^-)$, because $R(A^-)\subseteq N(A-B)$.  On the other
  hand, since $R(A-B)\subseteq \D(A^-)$, we find that $A^{-}(A-B)=0$
  since $R(A-B)\subseteq N(A^{-})$.

  For the converse, suppose that there exists an inner inverse $A^-$
  of $A$ such that $A^-A=A^-B$ and $AA^-x=BA^-x$ for every $x\in
  \D(A^-)$.  In particular, if $z\in \H$ then $Az\in R(A)\subseteq
  D(A^-)$, so that $Az=AA^-Az= BA^-Az$.  Hence $R(A)\subseteq R(B)$,
  showing that $R(B)=R(A)+R(B-A)$.  From $A^- A=A^- B$ we have
  $R(A-B)\subseteq N(A^-)$, while $N(A^-)\cap R(A)=\{0\}$, and so
  $R(B)=R(A)\dotplus R(B-A)$.  Therefore, $A\lminus B$.
\end{proof}

\begin{corollary}
For $A,B\in L(\H)$, the following conditions are equivalent:
\begin{enumerate}
  \item $A\minus B$;
  \item there exist inner inverses $A^-$ of $A$ and $(A^*)^-$ of $A^*$
    such that
    \begin{itemize}
    \item[$($i\,$)$] $A^-A=A^-B$ and $AA^-x=BA^-x$ for every $x\in
      \D(A^-)$,
    \item[$($ii\,$)$] $(A^*)^-A^*=(A^*)^-B^*$ and
      $A^*(A^*)^-x=B^*(A^*)^-x$ for every $x\in \D((A^*)^-)$.
    \end{itemize}
  \end{enumerate}
\end{corollary}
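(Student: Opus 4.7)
The plan is to reduce this corollary to the preceding proposition by invoking it twice: once for the pair $(A,B)$ and once for the pair $(A^*,B^*)$. The glue is the already established equivalence $A\minus B \Leftrightarrow A\lminus B$ and $A\rminus B$, together with the tautology $A\rminus B \Leftrightarrow A^*\lminus B^*$, which is immediate from the definitions.

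For the forward direction $(1)\Rightarrow(2)$, suppose $A\minus B$. Then both $A\lminus B$ and $A\rminus B$ hold. The preceding proposition, applied to $(A,B)$, furnishes an inner inverse $A^-$ of $A$ satisfying condition $(i)$. Since $A\rminus B$ is the same as $A^*\lminus B^*$, the same proposition, now applied to $(A^*,B^*)$ in $L(\H)$, furnishes an inner inverse $(A^*)^-$ of $A^*$ satisfying exactly the conditions written in $(ii)$. Conversely, given inner inverses satisfying $(i)$ and $(ii)$, the proposition gives $A\lminus B$ from $(i)$ and $A^*\lminus B^*$ from $(ii)$; the latter means $A\rminus B$, and combining with the corollary relating $\minus$ to the pair $(\lminus,\rminus)$ yields $A\minus B$.

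The main obstacle is essentially nonexistent: it reduces to the bookkeeping verification that the statement of the preceding proposition, rewritten with $A$ replaced by $A^*$ and $B$ replaced by $B^*$, coincides verbatim with condition $(ii)$, and that the relation $A\rminus B \Leftrightarrow A^*\lminus B^*$ is literally the definition. Since the preceding proposition is stated for arbitrary operators in $L(\H)$, the adjoint substitution is legitimate and no new computation is required; the content of the corollary is just the conjunction of the ``left'' characterization and its adjoint twin.
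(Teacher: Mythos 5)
Your argument is correct and is exactly the intended one: the paper states this corollary without proof precisely because it follows by applying the preceding proposition to $(A,B)$ and to $(A^*,B^*)$, noting that $A\rminus B$ is by definition $A^*\lminus B^*$, and then invoking the earlier corollary that $A\minus B$ holds if and only if $A\lminus B$ and $A\rminus B$. No gaps; this matches the paper's route.
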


\section{Applications}

\subsection{Generalized inverses of $A+B$}

In this section we state the formulas for arbitrary reflexive inverses
of $A+B$ in terms of the inverses of $A$ and $B$, when $A\lminus A+
B$.  For the sake of simplicity, we begin by giving the formula for
the Moore-Penrose inverse.  Theorem~\ref{djt1} states the result in
the most general form, and from this theorem many existing results in
the subject can be recovered.

If $A\minus A+B$ then $A=P(A+B)$ for some $P\in \Q$.  Using the
projection $P$ we can construct a projection $E\in \Q$ onto
$\roverline{R(A+B)}$ that will be useful in stating the formula for
the Moore-Penrose inverse of $A+B$.

\begin{lemma}
  \label{projection E}
  Let $A,B\in L(\H)$ be such that $A\minus A+B$, and $P\in \Q$ be such
  that $A=P(A+B)$.  Set
  \begin{equation*}
    E = P_A P + P_B (I-P).
  \end{equation*}
  Then $E\in \Q$ and $R(E)=\roverline{R(A+B)}$.  Moreover, $E$ is
  selfadjoint if and only if $ P=P_{\M//\N}$ where $\M=
  \cl{R(A)}\Oplus \M_1 $, $\N= \cl{R(B)}\Oplus \N_1 $ with $\M_1$ and
  $\N_1$ closed subspaces such that $\M_1, \N_1\subseteq N(A^*)\cap
  N(B^*)$.
\end{lemma}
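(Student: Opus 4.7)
My proof proceeds in three stages, driven by four identities that drop out of the hypothesis at once. Since $A=P(A+B)$ also gives $B=(I-P)(A+B)$, we have $R(A)\subseteq R(P)$ and $R(B)\subseteq N(P)$; closedness of $R(P)$ and $N(P)$ extends these to $\cl{R(A)}\subseteq R(P)$ and $\cl{R(B)}\subseteq N(P)$, which translate into
\[
PP_A=P_A,\qquad PP_B=0,\qquad (I-P)P_A=0,\qquad (I-P)P_B=P_B.
\]
Every calculation below reduces to these. Idempotence is immediate: expanding $E^2=(P_AP+P_B(I-P))^2$, the two cross terms vanish via $PP_B=0$ and $(I-P)P_A=0$, while the two diagonal terms reduce to $P_AP$ and $P_B(I-P)$. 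For the range, Proposition~\ref{equivalencias minus order} applied to $A\minus A+B$ gives $\cl{R(A+B)}=\cl{R(A)}\dotplus\cl{R(B)}$, so $R(E)\subseteq\cl{R(A)}+\cl{R(B)}=\cl{R(A+B)}$; the opposite inclusion follows from $E(a+b)=a+b$ for $a\in R(A)$ and $b\in R(B)$, together with closedness of $R(E)$.

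The self\-adjointness step rests on the finer decomposition
\[
\H=\cl{R(A)}\dotplus\cl{R(B)}\dotplus\S,\qquad \S=N(A^*)\cap N(B^*)=\cl{R(A+B)}^\perp,
\]
in which $\S$ is orthogonal to the rest while the internal sum $\cl{R(A)}\dotplus\cl{R(B)}$ is only direct. For the sufficiency, if $P=P_{\M//\N}$ has the stated form, then $\M\dotplus\N=\H$ combined with $\M_1,\N_1\subseteq\S$ forces $\M_1\dotplus\N_1=\S$; evaluating $E$ on the three summands and using that $P$ acts as the identity on $\M$ and as zero on $\N$, while $P_A$ and $P_B$ annihilate $\S$, one checks that $E$ is the identity on $\cl{R(A+B)}$ and zero on $\S$, so $E=P_{\cl{R(A+B)}}$. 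For the necessity, a self\-adjoint idempotent with range $\cl{R(A+B)}$ must coincide with $P_{\cl{R(A+B)}}$; hence $Ec=0$ for every $c\in\S$ gives $P_APc+P_B(I-P)c=0$, and the directness of $\cl{R(A)}\dotplus\cl{R(B)}$ forces each summand to vanish; together with $c\in N(B^*)$ this yields $Pc\in\S$. Setting $\M_1=R(P)\cap\S$ and $\N_1=N(P)\cap\S$, the $P$-invariance of $\S$ combined with $\cl{R(A)}\subseteq R(P)$ and $\cl{R(B)}\subseteq N(P)$ produces $R(P)=\cl{R(A)}\Oplus\M_1$ and $N(P)=\cl{R(B)}\Oplus\N_1$, the orthogonality of each sum following from $\S\perp\cl{R(A+B)}$.

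The main obstacle I anticipate is the bookkeeping in the self\-adjointness step: one must juggle three related decompositions of $\H$ (the minus-order splitting of $\cl{R(A+B)}$, the orthogonal split by $\S$, and the $\M$--$\N$ decomposition defining $P$) and recognise that the condition $\M_1,\N_1\subseteq \S$ is exactly what cancels the ``off-diagonal'' pieces of $E-E^*$. The idempotence and range identification are, by comparison, routine consequences of the four displayed identities.
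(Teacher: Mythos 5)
Your proof is correct and takes essentially the same approach as the paper's: the same identities $PP_A=P_A$, $PP_B=0$, $(I-P)P_A=0$, $(I-P)P_B=P_B$ drive the idempotence and range computations, and the selfadjointness criterion reduces in both cases to identifying $N(E)$ with $\M_1\dotplus\N_1$ inside $N(A^*)\cap N(B^*)=(\cl{R(A+B)})^\perp$. The only organizational difference is that the paper proves both implications at once by computing $E=P_{\hs\cl{R(A+B)}//{\M_1\dotplus \N_1}}$ for an arbitrary admissible $P$, whereas you separate sufficiency from necessity and recover $\M_1=R(P)\cap\S$ and $\N_1=N(P)\cap\S$ explicitly in the latter.
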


\begin{proof}
  If $A=P(A+B)$ then $\roverline{R(A)}\subseteq R(P)$ and
  $\roverline{R(B)}\subseteq N(P)$.  Therefore $ P_A P$ and $P_B
  (I-P)$ are projections, with $R( P_A P)=\roverline{R(A)}$ and $R(P_B
  (I-P))=\roverline{R(B)}$.  Moreover,
  \begin{equation*}
    P_A PP_B (I-P)=P_B (I-P) P_A P=0.
  \end{equation*}
  Therefore $E= P_A P + P_B (I-P)$ is a projection.  Also,
  $\roverline{R(A)}=R( P_A P)=R(EP)\subseteq R(E)$.  Applying
  Lemma~\ref{char1 RAP},
  $R(E)=\roverline{R(A)}+\roverline{R(B)}=\roverline{R(A+B)}$ because
  $A\minus A+B$.

  Finally, if $P=P_{\M//\N}$ then there exist closed subspaces
  $\M_1,\N_1$ such that $\M=\cl{R(A)}\Oplus\M_1$ and
  $\N=\cl{R(B)}\Oplus \N_1$.  Hence
  $P_AP=P_{\hs\cl{R(A)}//{\cl{R(B)}\dotplus \N_1\dotplus \M_1}}$,
  $P_B(I-P)=P_{\hs\cl{R(B)}//{\cl{R(A)}\dotplus \M_1\dotplus \N_1}}$
  and $E=P_{\hs\cl{R(A+B)}//{\M_1\dotplus \N_1}}$.  Since $A\minus
  A+B$, it follows that $E^*=E$ if and only if $\M_1\dotplus
  \N_1=(\cl{R(A+B)})^\perp=(\cl{R(A)}\dotplus
  \cl{R(B)})^\perp=N(A^*)\cap N(B^*)$, or equivalently, $\M_1$ and
  $\N_1$ are included in $N(A^*)\cap N(B^*)$.
\end{proof}

\medskip

\begin{definition}
  \label{def:optimal}
  Let $A, B\in L(\H)$ such that $A\minus A+B$.  Consider $P, Q\in\Q$
  such that $A=P(A+B)=(A+B)Q$, then $P$ will be called \emph{optimal}
  for $A$ and $B$ if $E=P_AP+P_B(I-P)$ is selfadjoint.  In a symmetric
  way, since $A^*=Q^*(A^*+B^*)$, $Q$ will be called \emph{optimal} for
  $A$ and $B$ if $Q^*$ is optimal for $A^*$ and $B^*$, i.e.,
  $F=P_{A^*}Q^*+P_{B^*}(I-Q^*)$ is selfadjoint.
\end{definition}

From the above lemma, the set of optimal projections for $A$ and $B$
is the set
\begin{equation*}
  \left\{P\in \Q: P=P_{\hs\cl{R(A)}\Oplus \M_1//\cl{R(B)}\Oplus
  \N_1} \textrm{ with } \M_1\dotplus \N_1=N(A^*)\cap N(B^*) \right\}.
\end{equation*}

\medskip

Applying Lemma~\ref{projection E} we derive the Fill and Fishkind
\cite{FilFis99} formula for the Moore-Penrose inverse of the sum of
two operators in an easy way.  This formula first appeared in their
work for square matrices, while Gro\ss{} \cite{Gro99} extended it to
arbitrary rectangular matrices and Arias et al.~\cite{AriCorMae15}
proved it for operators on a Hilbert space.  The version we give here
requires simpler hypotheses, and the formula is given in a more
general form.  We include a short proof.

\begin{corollary}
  \label{FFF}
  Let $A,B\in L(\H)$ be such that $R(A+B)$ is closed and $A \lminus A+
  B$.  Then
  \begin{equation}
    \label{djeq5}
    (A+B)^{\dag} = QA^{\dag} P + (I-Q)B^{\dag}(I-P),
  \end{equation}
  where $A=P(A+B)=(A+B)Q$, with $P, Q$ optimal projections for $A$ and
  $B$.
\end{corollary}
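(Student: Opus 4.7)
My first step would be a reduction: since $A\lminus A+B$ and $R(A+B)$ is closed, Corollary~\ref{leftminus then minus} upgrades this to $A\minus A+B$ and gives that $R(A)$ and $R(B)=R((A+B)-A)$ are both closed. Consequently $A^{\dag},B^{\dag}\in L(\H)$, so the right-hand side of \eqref{djeq5} makes sense as a bounded operator, call it $C$. The plan is then the standard one: verify the four Moore--Penrose equations for $C$ and $A+B$.

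The main algebraic inputs are the identities $PA=A$, $PB=0$, $(I-P)A=0$, $(I-P)B=B$, and the symmetric ones $AQ=A$, $BQ=0$, $A(I-Q)=0$, $B(I-Q)=B$, which follow directly from $A=P(A+B)=(A+B)Q$. Using them, I compute
\begin{equation*}
  (A+B)C=AA^{\dag}P+BB^{\dag}(I-P)=P_AP+P_B(I-P)=E,
\end{equation*}
which by Lemma~\ref{projection E} and the optimality of $P$ is the orthogonal projection onto $R(A+B)$. Symmetrically,
\begin{equation*}
  C(A+B)=QA^{\dag}A+(I-Q)B^{\dag}B=QP_{A^*}+(I-Q)P_{B^*}=F^*=F,
\end{equation*}
where $F=P_{A^*}Q^*+P_{B^*}(I-Q^*)$ is, by the optimality of $Q$, the orthogonal projection onto $R((A+B)^*)$. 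These two identities immediately yield Moore--Penrose conditions $(3)$ and $(4)$, and they give condition $(1)$ since $(A+B)C(A+B)=E(A+B)=A+B$ by $R(A+B)\subseteq R(E)$.

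The only delicate point, and what I expect to be the main obstacle, is condition $(2)$: $C(A+B)C=C$. Writing it as $FC=C$, I would exploit two identities extracted from $A^*=Q^*(A^*+B^*)$: since $R(A^*)\subseteq R(Q^*)$ we get $Q^*P_{A^*}=P_{A^*}$, hence $P_{A^*}Q=P_{A^*}$; and since $R(B^*)\subseteq N(Q^*)$ we get $Q^*P_{B^*}=0$, hence $P_{B^*}Q=0$. Substituting these into the expansion of $(QP_{A^*}+(I-Q)P_{B^*})(QA^{\dag}P+(I-Q)B^{\dag}(I-P))$, two of the four cross-terms collapse to zero while the other two reduce (using $P_{A^*}A^{\dag}=A^{\dag}$ and $P_{B^*}B^{\dag}=B^{\dag}$) precisely to $QA^{\dag}P$ and $(I-Q)B^{\dag}(I-P)$. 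Summing, $FC=C$, which completes the verification and identifies $C=(A+B)^{\dag}$.
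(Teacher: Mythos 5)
Your proof is correct and follows essentially the same route as the paper's: reduce via Corollary~\ref{leftminus then minus} to the case $A\minus A+B$ with $R(A)$, $R(B)$ closed, then use the intertwining identities coming from $A=P(A+B)=(A+B)Q$ to compute $(A+B)C=P_AP+P_B(I-P)$ and $C(A+B)=QP_{A^*}+(I-Q)P_{B^*}$, and identify these as the orthogonal projections onto $R(A+B)$ and $N(A+B)^\perp$ by optimality of $P$ and $Q$. If anything you are more careful than the paper, which stops at those two projection identities and concludes $C=(A+B)^\dagger$; your explicit verification of $C(A+B)C=C$ (equivalently, that $C$ vanishes on $R(A+B)^\perp$) is genuinely needed to pin $C$ down, since the two identities $(A+B)C=P_{R(A+B)}$ and $C(A+B)=P_{N(A+B)^\perp}$ alone do not determine $C$ uniquely.
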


\begin{proof} From Corollary~\ref{leftminus then minus} we see that
  $A$ and $B$ are operators with closed range and $A\minus A+B$.  Then
  the operators $A^\dagger$ and $B^\dagger$ are bounded and
  $T=QA^\dagger P+(I-Q)B^\dagger (I-P)$ is well defined.

  Using that $(A+B)Q=A$ and $(A+B)(I-Q)=B$ we have that
  \begin{equation*}
    (A+B)T = AA^\dagger P+BB^\dagger (I-P)=P_A P+P_B(I-P)=P_{R(A+B)},
  \end{equation*}
  by Lemma~\ref{projection E}, because $P$ is optimal.  Using that
  $P(A+B)=A$, $(I-P)(A+B)=B$ we see that
  \begin{equation*}
    T(A+B) = QA^\dagger A + (I-Q)B^\dagger B=QP_{A_\rs^*}
    +(I-Q)P_{B_\rs^*}=P_{R(A_\rs^*+B_\rs^*)}=P_{N(A+B)^\perp}.
  \end{equation*}
  Therefore $T=(A+B)^\dagger$.
 \end{proof}

\begin{remark}
  In \cite[Theorem~5.2]{AriCorMae15} the Fill-Fishkind formula is
  stated as follows: let $A, B\in L(\H)$ be such that $R(A)$, $R(B)$
  are closed, $R(A+B)=R(A)\dotplus R(B)$ and
  $R(A^*+B^*)=R(A^*)\dotplus R(B^*)$, then
  \begin{equation*}
    (A+B)^{\dag} = (I-S)A^\dag(I - T) + S B^\dag T,
  \end{equation*}
  where $S=(P_{N(B)_\rs^\bot}P_{N(A)})^\dag$ and $T=(P_{N(A_\rs^*)}
  P_{N(B_\rs^*)_\rs^\bot})^\dag$.  It holds that $S, T\in \Q$ (see
  \cite[Lemma~2.3]{Pen55} and \cite[Theorem~1]{Gre74} for matrices and
  \cite[Theorem~4.1]{CorMae10} for operators in Hilbert spaces).  If
  we denote $Q=I-S$ and $P = I - T$, we in fact have (see
  \cite[Theorem~5.1]{AriCorMae15}) $Q=P_{R(A_\rs^*)\Oplus (N(A)\cap
    N(B)) // R(B_\rs^*)}^*$ and $P=P_{R(A)\Oplus(N(A_\rs^*)\cap
    N(B_\rs^*)) // R(B)}$, which are optimal with respect to $A\minus
  A+B$.
\end{remark}

Recall that if $A\in \L(\H,\K)$ is a closed range operator, then any
operator $X\in \L(\K,\H)$ satisfying $AXA=A$ and $XAX=X$ is called a
\emph{reflexive inverse} of $A$.  The operator $X$ has closed range,
$AX=P_{R(A)//N(X)}$ and $XA=P_{R(X)//N(A)}$.  If $\M$ and $\N$ are
arbitrary closed subspaces of $\H$ and $\K$ satisfying $R(A)\dotplus
\M=\K$ and $\N\dotplus N(A)=\H$, then there is only one reflexive
inverse of $A$ with the range $\N$ and the null-space $\M$.  This
reflexive inverse is denoted by $A^{(1,2)}_{\N,\M}$.

In what follows we generalize Lemma~\ref{projection E} in order to
prove a formula similar to \eqref{djeq5} in Corollary~\ref{FFF} for an
arbitrary reflexive inner inverse of the sum of two operators.

\begin{lemma}
  \label{djl1}
  Let $A,B\in L(\H)$ be such that $A\minus A+B$.  Let $P\in \Q$ such
  that $A=P(A+B)$ and consider
  \begin{equation*}
    E  =  P_{\hs\cl{R(A)}//\N_1} P  + P_{\hs\cl{R(B)}//\N_2} (I-P),
  \end{equation*}
  where $\N_1$ and $\N_2$ are arbitrary.  Then $E\in \Q$ and
  $R(E)=\roverline{R(A+B)}$.  Moreover, for every closed subspace $\M$
  such that $\cl{R(A+B)}\dotplus \M = \H$ there exist $P\in \Q$ and
  subspaces $\N_1$ and $\N_2$, such that $A=P(A+B)$ and $E=
  P_{\hs\cl{R(A)}//\N_1} P + P_{\hs\cl{R(B)}//\N_2}
  (I-P)=P_{\hs\cl{R(A+B)}//\M}$.
\end{lemma}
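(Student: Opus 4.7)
My plan is to first establish that $E$ is always a projection with range $\cl{R(A+B)}$ by mimicking the argument of Lemma~\ref{projection E}, and then to exhibit appropriate $P$, $\N_1$ and $\N_2$ for a given closed complement $\M$ of $\cl{R(A+B)}$. I read ``$\N_1$ and $\N_2$ arbitrary'' as arbitrary closed complements of $\cl{R(A)}$ and $\cl{R(B)}$, so that $F_1 := P_{\cl{R(A)}//\N_1}$ and $F_2 := P_{\cl{R(B)}//\N_2}$ are bounded oblique projections.

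From $A = P(A+B)$ I obtain $\cl{R(A)} \subseteq R(P)$, and since $B = (I-P)(A+B)$ with $N(P)$ closed, also $\cl{R(B)} \subseteq N(P)$. These two inclusions yield the identities $P F_1 = F_1$, $(I-P)F_2 = F_2$, $P F_2 = 0$ and $(I-P)F_1 = 0$. Expanding $E^2$ with them makes the cross terms vanish and leaves $E^2 = F_1 P + F_2(I-P) = E$, so $E \in \Q$. The same identities show that $E$ restricts to the identity on both $\cl{R(A)}$ and $\cl{R(B)}$, hence on $\cl{R(A)}+\cl{R(B)}$; since the reverse inclusion $R(E) \subseteq R(F_1)+R(F_2) = \cl{R(A)}+\cl{R(B)}$ is immediate, Corollary~\ref{ch3l2} then delivers $R(E) = \cl{R(A)}+\cl{R(B)} = \cl{R(A+B)}$.

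For the moreover part, fix $\M$ and set $E_0 := P_{\cl{R(A+B)}//\M}$, a bounded projection. By Corollary~\ref{ch3l2} the direct sum $\cl{R(A)} \dotplus \cl{R(B)} = \cl{R(A+B)}$ is closed, so $P_0 := P_{\cl{R(A)}//\cl{R(B)}}$ defines a bounded projection on $\cl{R(A+B)}$. I take $P := P_0 E_0 \in L(\H)$; a direct check gives $P^2 = P$, $R(P) = \cl{R(A)}$ and $N(P) = \cl{R(B)} \dotplus \M$ (so, in particular, this subspace is closed), and using that $E_0$ fixes $R(A+B)$ and $P_0(Ax+Bx) = Ax$ yields $P(A+B) = A$.

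To finish, I pick $\N_1$ to be any closed complement of $\cl{R(A)}$ (for instance $\cl{R(A)}^\perp$) and set $\N_2 := \cl{R(A)} \dotplus \M$; a symmetric construction using $P_{\cl{R(B)}//\cl{R(A)}}\,E_0$ shows that $\N_2$ is closed and complements $\cl{R(B)}$. Decomposing an arbitrary $x = a+b+m$ with $a\in\cl{R(A)}$, $b\in\cl{R(B)}$, $m\in\M$, I compute $Px = a$, $(I-P)x = b+m$, hence $Ex = F_1 a + F_2(b+m) = a + b + 0 = P_{\cl{R(A+B)}//\M}\,x$, which is the required equality. The main obstacle is ensuring that $\cl{R(B)} \dotplus \M$ (and symmetrically $\cl{R(A)} \dotplus \M$) is closed, since this is not automatic for direct sums of closed subspaces; it is obtained here for free by realizing $P$ and $\N_2$ as compositions of the two bounded projections $E_0$ and $P_0$ furnished respectively by $\H = \cl{R(A+B)} \dotplus \M$ and by $A \minus A+B$.
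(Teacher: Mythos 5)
Your proof is correct and follows essentially the same route as the paper: the first part reproduces the argument of Lemma~\ref{projection E}, and in the second part your $P=P_0E_0$ is exactly the paper's $P_{\hs\cl{R(A)}//\cl{R(B)}\dotplus\M}$ and your $\N_2$ is the paper's choice $\cl{R(A)}\dotplus\M$ (the paper also takes $\N_1=\cl{R(B)}\dotplus\M$, so that $P_{\hs\cl{R(A)}//\N_1}=P$, but any closed complement of $\cl{R(A)}$ works, as you observe). Your realization of $P$ as a composition of the two bounded projections furnished by $\H=\cl{R(A+B)}\dotplus\M$ and $\cl{R(A+B)}=\cl{R(A)}\dotplus\cl{R(B)}$ is a clean way of justifying that $\cl{R(B)}\dotplus\M$ is closed, a point the paper asserts without proof.
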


\begin{proof}
  In the same way as in the proof of Lemma~\ref{projection E}, it can
  be proved that $E\in \Q$ and $R(E)=\cl{R(A+B)}$.

  To prove the last assertion, take
  \begin{equation}
    \label{djeq1}
    \N_1 = \cl{R(B)}\dotplus \M, \quad \N_2 = \cl{R(A)}\dotplus
    \M,
  \end{equation} 
  and $P=P_{\hs\cl{R(A)}//\cl{R(B)}\dotplus \M}$.  Then $\N_1$ and
  $\N_2$ are closed, the projections $P_{\hs\cl{R(A)}//\N_1}=P$ and
  $P_{\hs\cl{R(B)}//\N_2}$ are well defined and $E=
  P_{\hs\cl{R(A)}//\N_1} P + P_{\hs\cl{R(B)}//\N_2} (I-P)= P +
  P_{\hs\cl{R(B)}//\N_2} (I-P)=P_{\hs\cl{R(A+B)}//\M}$.
\end{proof}

\medskip

\begin{definition}
  \label{def:agrees}
  Let $A, B\in L(\H)$ be such that $A\minus A+B$ and $P\in \Q$ such
  that $A=P(A+B)$.  Given $\M$ an arbitrary closed subspace such that
  $\cl{R(A+B)}\dotplus \M=\H$, we say that $P$ \emph{agrees} with $\M$
  if there exist subspaces $ \N_1,\N_2$ so that
  \begin{equation}\label{djeq6}
    P_{\hs\cl{R(A)}//\N_1} P  + P_{\hs\cl{R(B)}//\N_2} (I-P)=
    P_{\hs\cl{R(A+B)}//\M}.
  \end{equation}

  In a symmetric way, if $A=(A+B)Q$, for $Q \in \Q$ and $\N\dotplus
  N(A+B)=\H$, we say that $Q$ \textit{agrees} with $\N$ if $Q^*$
  agrees with $\N^\perp$.  In this case $A^*=Q^*(A^*+B^*)$ and
  $\roverline{R(A^*+B^*)}\dotplus \N^\perp=\H$, and there exist closed
  subspaces $\N_1^*, \N_2^*$ such that
  \begin{equation}
    \label{djeq7}
    P_{\hs\cl{R(A^*)}//(\N_1^*)_\rs^\perp} Q^* +
    P_{\hs\cl{R(B^*)}//(\N_2^*)_\rs^\bot}(I-Q^*)=P_{\hs\cl{R(A^*+B^*)} //
      \N^\bot},
  \end{equation}
  or
  \begin{equation}
    \label{djeq7'}
    QP_{\N_1^*//N(A)}  + (I-Q)P_{\N_2^*//N(B)}=P_{\N // N(A+B)}.
  \end{equation}
\end{definition}

\medskip

For example, $P=P_{\hs\cl{R(A)}//\cl{R(B)}\dotplus \M}$ agrees with
$\M$, as we saw in the proof of Lemma~\ref{djl1}, and
$Q=(P_{\hs\cl{R(A^*)}//\cl{R(B^*)}\dotplus \N^\bot})^* = P_{N(B)\cap \N
  // N(A)}$ agrees with $\N$.  The projection $P$ is optimal if $P$
agrees with $R(A+B)^\perp$.

\medskip

\begin{theorem}
  \label{djt1}
  Let $A,B\in L(\H)$ be such that $R(A+B)$ is closed and $A\lminus
  A+B$.  Let $\M$ and $\N$ be two closed subspaces such that
  $R(A+B)\dotplus \M=\H$ and $\N\dotplus N(A+B)=\H$.  If $P, Q\in \Q$
  satisfy $A=P(A+B)=(A+B)Q$ and agree with $\M$ and $\N$ respectively,
  then
  \begin{equation}
    \label{djeq9}
    (A+B)^{(1,2)}_{\N,\M} = Q A^{(1,2)}_{\N_1^*,\N_1} P +
    (I-Q)B^{(1,2)}_{\N_2^*,\N_2} (I-P),
  \end{equation}
  where the subspaces $\N_1$, $\N_2$, $\N_1^*$ and $\N_2^*$ are
  arbitrary closed subspaces satisfying \eqref{djeq6} and
  \eqref{djeq7'}.
\end{theorem}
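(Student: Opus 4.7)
The plan is to define $T := Q A^{(1,2)}_{\N_1^*,\N_1} P + (I-Q) B^{(1,2)}_{\N_2^*,\N_2} (I-P)$ and verify that $T$ satisfies the four defining properties of $(A+B)^{(1,2)}_{\N,\M}$, namely $(A+B)T(A+B)=A+B$, $T(A+B)T=T$, $R(T)=\N$, and $N(T)=\M$. Since $R(A+B)$ is closed and $A\lminus A+B$, Corollary~\ref{leftminus then minus} promotes this to $A\minus A+B$ with $R(A)$ and $R(B)$ both closed; the complementarity conditions implicit in \eqref{djeq6} and \eqref{djeq7'} then make $A^{(1,2)}_{\N_1^*,\N_1}$ and $B^{(1,2)}_{\N_2^*,\N_2}$ well-defined bounded operators, so $T\in L(\H)$.

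The two central computations mirror the proof of Corollary~\ref{FFF}. Using $(A+B)Q=A$, $(A+B)(I-Q)=B$, and the identities $AA^{(1,2)}_{\N_1^*,\N_1}=P_{R(A)//\N_1}$, $BB^{(1,2)}_{\N_2^*,\N_2}=P_{R(B)//\N_2}$, followed by the agreement relation \eqref{djeq6}, I obtain
\begin{equation*}
(A+B)T = P_{R(A)//\N_1}\,P + P_{R(B)//\N_2}(I-P) = P_{R(A+B)//\M}.
\end{equation*}
Dually, from $P(A+B)=A$, $(I-P)(A+B)=B$, $A^{(1,2)}_{\N_1^*,\N_1}A=P_{\N_1^*//N(A)}$, $B^{(1,2)}_{\N_2^*,\N_2}B=P_{\N_2^*//N(B)}$, together with \eqref{djeq7'}, I get
\begin{equation*}
T(A+B) = Q\,P_{\N_1^*//N(A)} + (I-Q)\,P_{\N_2^*//N(B)} = P_{\N//N(A+B)}.
\end{equation*}
These immediately yield $(A+B)T(A+B)=A+B$, along with the easy inclusions $N(T)\subseteq\M$ and $\N\subseteq R(T)$.

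The main obstacle is the reverse inclusions $\M\subseteq N(T)$ and $R(T)\subseteq\N$. For the first, Theorem~\ref{equiv minus and RAP} gives $R(A)\cap R(B)=\{0\}$ from $A\minus A+B$; hence for $x\in\M$, the identity $P_{R(A)//\N_1}Px + P_{R(B)//\N_2}(I-P)x = P_{R(A+B)//\M}\,x = 0$ lies in the direct sum $R(A)\dotplus R(B)$, forcing both summands to vanish. Thus $Px\in \N_1 = N(A^{(1,2)}_{\N_1^*,\N_1})$ and $(I-P)x\in \N_2 = N(B^{(1,2)}_{\N_2^*,\N_2})$, giving $Tx=0$. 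For the second inclusion, decompose an arbitrary $x\in\H$ as $x=(A+B)z+m$ with $m\in\M$, using $\H=R(A+B)\dotplus\M$; combined with $\M\subseteq N(T)$ already established, this gives $Tx = T(A+B)z+Tm = P_{\N//N(A+B)}z \in\N$.

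With $R(T)\subseteq\N$ in hand, the remaining property $T(A+B)T = P_{\N//N(A+B)}\,T = T$ is automatic, so $T$ is a reflexive inverse of $A+B$ with $R(T)=\N$ and $N(T)=\M$; the uniqueness of such an inverse yields $T=(A+B)^{(1,2)}_{\N,\M}$.
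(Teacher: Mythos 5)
Your proof is correct and follows essentially the same route as the paper's: compute $(A+B)T$ and $T(A+B)$ via the agreement relations \eqref{djeq6} and \eqref{djeq7'} and identify them with $P_{R(A+B)//\M}$ and $P_{\N//N(A+B)}$. You are in fact more careful than the published proof, which stops at those two identities; your extra step showing $\M\subseteq N(T)$ (using $R(A)\cap R(B)=\{0\}$ to split $P_{R(A)//\N_1}Px+P_{R(B)//\N_2}(I-P)x=0$) is genuinely needed, since the two product identities alone do not prevent $T$ from acting nontrivially on $\M$ with values in $N(A+B)$.
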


\begin{proof} From Corollary~\ref{leftminus then minus} we see that
  $A$ and $B$ are operators with closed range and $A\minus A+B$.  Let
  $T = Q A^{(1,2)}_{\N_1^*,\N_1} P + (I-Q)B^{(1,2)}_{\N_2^*,\N_2}
  (I-P)$ and choose $E$ as in Lemma~\ref{djl1}.  Using that $(A+B)Q=A$
  and $(A+B)(I-Q)=B$ and Lemma~\ref{djl1},
  \begin{equation*}
    (A+B)T = E= P_{R(A+B)//\M}.
  \end{equation*}
  In a similar way,  since $P(A+B)=A$ and $(I-P)(A+B)=B$,
  \begin{equation*}
    T(A+B) =P_{\N//N(A+B)}.
  \end{equation*}
  This shows that $T$ is the reflexive inverse of $A+B$, associated to
  $\M$ and $\N$.
\end{proof}

In fact, the terms on the right hand side of \eqref{djeq9} do not
depend on the choices of the subspaces $\N_1, \N_1^*, \N_2$ and
$\N_2^*$.

\begin{proposition}
  \label{djp1}
  Under the hypotheses of Theorem~\ref{djt1} it holds
  \begin{equation}
    \label{djeq8}
    Q A^{(1,2)}_{\N_1^*,\N_1} P = A^{(1,2)}_{N(B)\cap \N, R(B)\dotplus
      \M} \quad \mbox{ and } \quad (I-Q)B^{(1,2)}_{\N_2^*,\N_2} (I-P)
    = B^{(1,2)}_{N(A)\cap \N, R(A)\dotplus \M}
  \end{equation}
  regardless of the choice of $P$, $Q$, $\N_1$, $\N_2$, $\N_1^*$ and
  $\N_2^*$.  Consequently
  \begin{equation}
    \label{djeq10}
    (A+B)^{(1,2)}_{\N,\M} =  A^{(1,2)}_{N(B)\cap \N, R(B) \dotplus \M}
    + B^{(1,2)}_{N(A)\cap \N, R(A) \dotplus \M}. 
  \end{equation}
\end{proposition}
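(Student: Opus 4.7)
The plan is to show that $T_1:=QA^{(1,2)}_{\N_1^*,\N_1}P$ is the unique reflexive inverse of $A$ with range $N(B)\cap \N$ and null-space $R(B)\dotplus \M$; uniqueness of reflexive inverses with prescribed range and null-space then identifies $T_1$ with $A^{(1,2)}_{N(B)\cap \N,R(B)\dotplus \M}$, giving the first identity in \eqref{djeq8} independently of any of the auxiliary data $P,Q,\N_1,\N_1^*$. The second identity follows by the symmetric argument in which $(A,B)$ is replaced by $(B,A)$ and $(P,Q)$ by $(I-P,I-Q)$, since $B=(I-P)(A+B)=(A+B)(I-Q)$ and identities \eqref{djeq6} and \eqref{djeq7'} are invariant under these swaps. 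Adding the two identities and comparing with Theorem~\ref{djt1} then delivers \eqref{djeq10}.

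The workhorse is the four elementary cancellations $PA=A$, $PB=0$, $AQ=A$, $BQ=0$, obtained from $A=P(A+B)=(A+B)Q$ together with $P^2=P$ and $Q^2=Q$. Combined with $AA^{(1,2)}_{\N_1^*,\N_1}A=A$ and $A^{(1,2)}_{\N_1^*,\N_1}AA^{(1,2)}_{\N_1^*,\N_1}=A^{(1,2)}_{\N_1^*,\N_1}$, they collapse $AT_1A$ to $A$ and $T_1AT_1$ to $T_1$, so $T_1$ is automatically a reflexive inverse of $A$ and only its range and null-space remain to be computed.

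For $N(T_1)$, I would use $AT_1=P_{R(A)//\N_1}P$ (which follows from $AQ=A$; note that all ranges are closed by Corollary~\ref{leftminus then minus}). On $R(A)$ both $P$ and $P_{R(A)//\N_1}$ act as the identity; on $R(B)$, $PB=0$ kills $P$; and for $y\in \M$ identity \eqref{djeq6} forces $P_{R(A)//\N_1}Py + P_{R(B)//\N_2}(I-P)y=0$, so the disjointness $R(A)\cap R(B)=\{0\}$ from Theorem~\ref{equiv minus and RAP} separates the summands and yields $P_{R(A)//\N_1}Py=0$. Hence $AT_1=P_{R(A)//R(B)\dotplus \M}$, so $N(T_1)=R(B)\dotplus \M$. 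For $R(T_1)$, compute $T_1A=QP_{\N_1^*//N(A)}$ using $PA=A$; since $N(Q)\subseteq N(A)$ and $\N_1^*\cap N(A)=\{0\}$, the kernel of $T_1A$ is exactly $N(A)$, so $T_1A=P_{R(T_1)//N(A)}$. Identity \eqref{djeq7'} then pins down $R(T_1)=N(B)\cap \N$ in two steps: for $y\in N(B)\cap \N$, evaluating \eqref{djeq7'} at $y$ gives $T_1Ay=y$ because $P_{\N_2^*//N(B)}y=0$ and $P_{\N//N(A+B)}y=y$; conversely, any $y\in R(T_1)$ lies in $R(Q)\subseteq N(B)$ by $BQ=0$, and then \eqref{djeq7'} at $y$ reduces to $y=P_{\N//N(A+B)}y$, so $y\in \N$.

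The trickiest step is the last inclusion $R(T_1)\subseteq \N$: a priori $R(T_1)$ only sits inside $R(Q)$, which carries no information about $\N$. Membership in $\N$ has to be extracted by reading identity \eqref{djeq7'} in reverse and exploiting $y\in N(B)$ to annihilate the $P_{\N_2^*//N(B)}$ summand, so that the identity itself forces $y$ into $\N$.
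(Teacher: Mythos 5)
Your proof is correct, and it follows the same overall skeleton as the paper's: show that $T_1=QA^{(1,2)}_{\N_1^*,\N_1}P$ is a reflexive inverse of $A$ via the cancellations $PA=A=AQ$, then identify $N(T_1)$ through the idempotent $AT_1$ and $R(T_1)$ through the idempotent $T_1A$. The execution of the two identification steps is genuinely different, though. For the null space, the paper computes $N(AT_1)=N(P)\dotplus\bigl(R(P)\cap\N_1\bigr)$ and then invokes the internal structure of an agreeing projection from Lemma~\ref{djl1}, namely $R(P)=R(A)\dotplus\M_1$ and $N(P)=R(B)\dotplus\M_2$ with $\M_1\dotplus\M_2=\M$; you instead evaluate $AT_1=P_{R(A)//\N_1}P$ directly on the decomposition $\H=R(A)\dotplus R(B)\dotplus\M$, using \eqref{djeq6} on $\M$ together with $R(A)\cap R(B)=\{0\}$ to separate the two summands. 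For the range, the paper dualizes, writing $R(T_1A)=N\bigl(P_{R(A^*)//(\N_1^*)^\perp}Q^*\bigr)^\perp$ and rerunning the null-space argument for $A^*$ and $B^*$, whereas you read the answer off \eqref{djeq7'} directly, using $y\in N(B)$ to annihilate the $(I-Q)P_{\N_2^*//N(B)}$ term in both directions of the inclusion; your observation that $R(T_1)\subseteq R(Q)\subseteq N(B)$ (from $BQ=0$) is precisely what makes the reverse inclusion close. Your version is somewhat more self-contained, never leaving the original operators and using only the defining identities of agreement; the paper's version is shorter on the range side because it recycles the null-space computation by duality. Both arguments are sound, and your reduction of the $B$-identity to the $A$-identity by the swap $(A,B,P,Q)\mapsto(B,A,I-P,I-Q)$ matches the paper's ``similar lines'' remark.
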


\begin{proof}
  If $T=Q A^{(1,2)}_{\N_1^*,\N_1} P$, then since $A=P(A+B)=(A+B)Q$, we
  get $A=PA=AQ$.  Thus $ATA=A$ and $TAT=T$, so $T$ is a reflexive
  inverse of $A$.  Besides that, $AT = P_{R(A)//\N_1} P$ and so
  $N(AT)=N(P)\dotplus (R(P) \cap \N_1)$.  From Lemma~\ref{djl1} we
  have that $R(P)=R(A)\dotplus \M_1$, $N(P)=R(B)\dotplus \M_2$, where
  $\M_1\dotplus \M_2=\M$.  Hence $N(P)\dotplus (R(P) \cap \N_1) =
  R(B)\dotplus \M$.  This shows that $N(T)=R(B)\dotplus \M$.  On the
  other hand, $R(TA)=R(QP_{\N_1^*//N(A)})=N(P_{R(A_\rs^*)//(\N_1^*)^\bot
  }Q_\rs^*)^\bot$.  By an identical argument with $A^*$ and $B^*$ in place
  of $A$ and $B$ we have $N(P_{R(A_\rs^*)//(\N_1^*)^\bot}
  Q^*) = R(B^*)\dotplus \N^\bot$.  Hence $R(TA)=N(B)\cap \N$.  This
  shows that $T= A^{(1,2)}_{N(B)\cap \N, R(B) \dotplus \M}$.  The
  proof for $B$ follows along similar lines.
\end{proof}

\medskip

\begin{remark}
  \label{djr1}
  Formula \eqref{djeq10} was given for matrices by Werner in
  \cite{Wer86}.  There the author considers pairs of matrices $A$
  and $B$ having the property $R(A)\cap R(B)=R(A^*)\cap R(B^*)=\{0\}$,
  and calls such matrices \textit{weakly bicomplementary}.  Recall
  that in the finite-dimensional setting, conditions $R(A)\dotplus
  R(B)=R(A+B)$, $R(A^*)\dotplus R(B^*)=R(A^*+B^*)$ and $R(A)\cap
  R(B)=\{0\}=R(A^*)\cap R(B^*)$ are all equivalent.  Also matrices $A$
  and $B$ are weakly bicomplementary if and only if $A\minus A+ B$.
  Under this assumption, many of the results from \cite{Wer86} are
  seen to hold in arbitrary Hilbert spaces.
\end{remark}

\medskip

Recall that if for $A\in L(\H)$ the relation $\H=R(A)\dotplus N(A)$
holds, then $R(A)$ is closed, see \cite[Proposition~3.7]{Dix49} or
\cite[Theorem~2.3]{FilWil71}.  In this case, $A$ is called
\textit{group-invertible} and $A^{\ssharp} = A^{(1,2)}_{R(A),N(A)}\in
L(\H)$ is called the \textit{group inverse} of $A$.

If $A$ is group invertible, the operator $A^{\core} =
A^{(1,2)}_{R(A),N(A_\rs^*)}\in L(\H)$ is called the \textit{core inverse}
of $A$ and it was introduced by Baksalary and Trenkler
\cite{BakTre10}, see also \cite{JosSiv15} and
\cite{RakDinDjo14}.

If we denote by $L^{1}(\H)$ the set of all
group invertible operators, then the \textit{sharp partial order} and
the \textit{core partial order} on $L^1(\H)$ are defined as:
$A\sharppo B$ if $AA^{\ssharp} = BA^{\ssharp}$ and $A^{\ssharp}
A=A^{\ssharp} B$; $A\corepo B$ if $AA^{\core} = BA^{\core}$ and
$A^{\core} A=A^{\core} B$.  It is straightforward to see that for
$A,B\in L^1(\H)$ we have
\begin{equation}
  \label{djsh}
  A\sharppo B \quad \Leftrightarrow \quad A^2 =BA = AB,
\end{equation}
and
\begin{equation}
  \label{djco}
  A\leq^{\core} B \quad \Leftrightarrow \quad A^*A=A^*B \quad
  \mbox{and}\quad A^2=BA.
\end{equation}
We recover results from Jose and Sivakumar \cite{JosSiv15} in the
following corollary.

\begin{corollary}
  \label{djc2} 
  Let $A,B\in L(\H)$ such that $R(A+B)$ is closed.
  \begin{enumerate}
  \item If $A \stella A+B$ then $(A+B)^{\dag} = A^{\dag} + B^{\dag}$;
  \item If $A,A+B\in L^1(\H)$ and $A\sharppo A+B$, then $B\in
    L^1(\H)$ and $(A+B)^{\ssharp} = A^{\ssharp} + B^{\ssharp}$;
  \item If $A,A+B\in L^1(\H)$ and $A\corepo A+B$ then $B\in L^1(\H)$.
    If moreover $A^*\corepo A^* + B^*$ then $(A+B)^{\core} = A^{\core}
    + B^{\core}$.
  \end{enumerate}
\end{corollary}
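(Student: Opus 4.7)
The plan is to apply Proposition~\ref{djp1} in each part with a suitable choice of subspaces $(\N,\M)$ determining a reflexive inverse of $A+B$, after first using the given partial order to extract $A\minus A+B$ together with concrete algebraic identities ($A^*B=0$, $BA=0$, $AB=0$, etc.) that make the right-hand side of~\eqref{djeq10} collapse to a sum of the targeted inverses of $A$ and $B$.

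For (1), I would first unpack $A\stella A+B$ as $A^*B=0=BA^*$, yielding $R(A)\perp R(B)$ and $R(A^*)\perp R(B^*)$; in particular $A\minus A+B$ by Proposition~\ref{left star order entonces minus order}. Taking $\N=R((A+B)^*)=R(A^*)\Oplus R(B^*)$ and $\M=R(A+B)^\perp$, the inclusion $R(A^*)\subseteq N(B)$ together with $N(B)\cap R(B^*)=\{0\}$ gives $N(B)\cap\N=R(A^*)$, while the orthogonal decomposition $R(A)^\perp=R(B)\Oplus R(A+B)^\perp$ gives $R(B)\dotplus\M=N(A^*)$. Thus the first summand in~\eqref{djeq10} equals $A^{(1,2)}_{R(A^*),N(A^*)}=A^\dag$, and by symmetry the second equals $B^\dag$.

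For (2), I would unpack $A\sharppo A+B$ via~\eqref{djsh} as $AB=BA=0$; with $P=P_{R(A)//N(A)}$ one has $A=P(A+B)=(A+B)P$, giving $A\minus A+B$. The key intermediate step is $N(A+B)\subseteq N(A)\cap N(B)$, which I would prove by applying $A$ to $Ax+Bx=0$: using $AB=0$ we get $A^2x=0$, so $A\in L^1(\H)$ forces $Ax=0$ and then $Bx=0$. Combined with $R(A)\subseteq N(B)$ and $R(B)\subseteq N(A)$, this produces $\H=R(A)\dotplus R(B)\dotplus N(A+B)$ with both $R(A),N(A+B)\subseteq N(B)$. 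Directness $R(B)\cap N(B)=\{0\}$ follows because if $b\in R(B)\cap N(B)$ then $(A+B)b=Ab+Bb=0$, placing $b\in R(A+B)\cap N(A+B)=\{0\}$; hence $B\in L^1(\H)$. Applying Proposition~\ref{djp1} with $\N=R(A+B)$, $\M=N(A+B)$ and verifying $N(B)\cap R(A+B)=R(A)$ together with $R(B)\dotplus N(A+B)=N(A)$ (plus the symmetric identities for $B$) then yields $(A+B)^\ssharp=A^\ssharp+B^\ssharp$.

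For (3), I would unpack $A\corepo A+B$ via~\eqref{djco} as $BA=0$ and $A^*B=0$, so $A\minus A+B$. The main obstacle is to show $B\in L^1(\H)$ from these weaker conditions (in the absence of $AB=0$); the cleanest route is a block-matrix argument in the orthogonal decomposition $\H=R(A)\Oplus R(A)^\perp$. Since $A\in L^1$ the restriction $A_{11}=A|_{R(A)}\colon R(A)\to R(A)$ is a bijection, and
\begin{equation*}
A=\begin{pmatrix} A_{11} & A_{12} \\ 0 & 0 \end{pmatrix},\quad B=\begin{pmatrix} 0 & 0 \\ 0 & B_{22} \end{pmatrix},\quad A+B=\begin{pmatrix} A_{11} & A_{12} \\ 0 & B_{22} \end{pmatrix},
\end{equation*}
the top row of $B$ vanishing by $R(B)\subseteq R(A)^\perp$ and $B_{21}=0$ coming from $BA=0$ with $A_{11}$ bijective. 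A direct calculation of $R(\cdot)$ and $N(\cdot)$ for the block upper-triangular $A+B$ will show $A+B\in L^1\iff B_{22}\in L^1(R(A)^\perp)\iff B\in L^1$, proving the first claim. For the formula, $A^*\corepo A^*+B^*$ additionally supplies $AB=0$ and $B^*A=0$, hence $R(A)\perp R(B)$ and $R(A^*)\perp R(B^*)$ together with $R(A)\subseteq N(B)$ and $R(B)\subseteq N(A)$. Proposition~\ref{djp1} with $\N=R(A+B)$, $\M=R(A+B)^\perp$ then gives $N(B)\cap R(A+B)=R(A)$, $R(B)\dotplus R(A+B)^\perp=N(A^*)$ and (symmetrically) $N(A)\cap R(A+B)=R(B)$, $R(A)\dotplus R(A+B)^\perp=N(B^*)$, producing $(A+B)^\core=A^\core+B^\core$.
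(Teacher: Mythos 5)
Your proposal is correct, and its skeleton is the same as the paper's: in each case identify the target generalized inverse of $A+B$ as $(A+B)^{(1,2)}_{\N,\M}$ for a suitable pair $(\N,\M)$ and let Proposition~\ref{djp1} collapse the right-hand side of \eqref{djeq10} to the corresponding inverses of $A$ and $B$. Where you genuinely diverge is in how the auxiliary facts are obtained: the paper imports ``$B\in L^1(\H)$ and $B\sharppo A+B$'' in part (2) from \cite[Corollary~3.5]{JosSiv15} and ``$B\in L^1(\H)$'' in part (3) from \cite[Theorem~4.5]{JosSiv15}, and in part (3) it reduces to a combination of (1) and (2) by observing that the two core hypotheses force $A\stella A+B$ and $A\sharppo A+B$; you instead prove these facts from scratch --- the $A^2x=0$ argument for $N(A+B)=N(A)\cap N(B)$ and the direct verification $N(B)=R(A)\dotplus(N(A)\cap N(B))$ in (2), and the block-triangular decomposition of $A+B$ relative to $\H=R(A)\Oplus R(A)^\perp$ in (3), where $B_{21}=0$ follows from $BA=0$ and the bijectivity of $A_{11}$. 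This buys a self-contained proof at the cost of some extra verification (the equivalence $A+B\in L^1\Leftrightarrow B_{22}\in L^1$ does hold, via the graph description $N(A+B)=\{(-A_{11}^{-1}A_{12}v,v):v\in N(B_{22})\}$, but it deserves to be written out). Two small points to tidy: first, note at the outset (as the paper does via Corollary~\ref{A minus B and rg B cl then rg A, rg B-A cl}) that $R(A)$ and $R(B)$ are closed, which legitimizes the notation $A^{(1,2)}_{\N_1,\N_2}$ in part (1); second, in part (3) the hypothesis $A^*\corepo A^*+B^*$ yields $AB=0$ and $AB^*=0$ (hence $R(A^*)\perp R(B^*)$), whereas the identity $B^*A=0$ you cite is just the adjoint of $A^*B=0$ already supplied by $A\corepo A+B$ --- the identities you actually use are all valid, but the bookkeeping of which hypothesis gives which relation should be corrected.
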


\begin{proof}
  All three partial orders stated here induce the minus partial order,
  so $R(A)$ and $R(B)$ are closed, according to Corollary~\ref{A minus
    B and rg B cl then rg A, rg B-A cl}.

  To begin with, $(1)$ follows from Proposition~\ref{djp1}.

  For $(2)$, by \cite[Corollary~3.5]{JosSiv15} we see that $B\in
  L^1(\H)$ and $B\sharppo A+B$.  It is also easy to see that
  $N(B)=R(A)\dotplus (N(A)\cap N(B))$ and $N(A)=R(B)\dotplus (N(A)\cap
  N(B))$.  Recall that $N(A+B)=N(A)\cap N(B)$, since $A\minus A+B$.
  Now apply Proposition~\ref{djp1} to obtain the desired relation.

  Finally for (3), we have from \cite[Theorem~4.5]{JosSiv15} that
  $B\in L^{1}(\H)$ and moreover, from \eqref{djsh} and \eqref{djco} we
  see that $A\stella A+B$ and $A\sharppo A+B$.  Now taking
  $\M=N(A^*)\cap N(B^*)$ and $\N=R(A+B)$ and combine $(1)$ with $(2)$
  with Proposition~\ref{djp1} to get the desired result.
\end{proof}

\medskip

\subsection{Systems of equations and least squares problems}

Consider $A,B\in L(\H)$.  In what follows we characterize the left
minus order in terms of the solutions of the equation $(A+B)x=c$, for
$c\in R(A+B)$.

The following theorem appears in \cite{Wer86} in the matrix case.
We give the proof for operator equations.

\begin{proposition}
  If $A,B\in L(\H)$ the following statements are equivalent:
  \begin{enumerate}
  \item  $A \lminus A+B$;
  \item Given $a\in R(A)$ and $b\in R(B)$, the equation 
    \begin{equation}
      \label{eq A+B}
      (A+B)x=a+b,
    \end{equation}
    has a solution $x_0\in\H$.  Moreover, $x_0$ is a solution of the
    system
    \begin{equation}
      \label{eq A y B}
      \begin{cases}
        Ax=a&\\
        Bx =b.&
      \end{cases}
    \end{equation}
  \end{enumerate}
\end{proposition}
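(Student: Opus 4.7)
The statement I read here is that (2) asserts both that the equation $(A+B)x=a+b$ is solvable for every $a\in R(A)$, $b\in R(B)$, and moreover that \emph{every} solution $x_0$ of $(A+B)x=a+b$ satisfies $Ax_0=a$ and $Bx_0=b$. (Without this universal reading, the implication (2)$\Rightarrow$(1) cannot force directness of the sum.) With this reading the proof is short, and I would give it in two halves.

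For $(1)\Rightarrow(2)$, assume $A\lminus A+B$, i.e. $R(A+B)=R(A)\dotplus R(B)$. If $a\in R(A)$ and $b\in R(B)$, then $a+b\in R(A)+R(B)=R(A+B)$, so $(A+B)x=a+b$ has at least one solution $x_0$. For any such $x_0$ rewrite $(A+B)x_0=a+b$ as
\begin{equation*}
  Ax_0-a = b-Bx_0.
\end{equation*}
The left side lies in $R(A)$ and the right side in $R(B)$, while by the directness of the sum $R(A)\cap R(B)=\{0\}$. Hence both sides vanish, giving $Ax_0=a$ and $Bx_0=b$, as required.

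For $(2)\Rightarrow(1)$, I would first establish $R(A+B)=R(A)+R(B)$ (the nontrivial inclusion): given $a\in R(A)$, $b\in R(B)$, (2) supplies an $x_0$ with $(A+B)x_0=a+b$, so $a+b\in R(A+B)$; the reverse inclusion is automatic. The only remaining point is directness, i.e.\ $R(A)\cap R(B)=\{0\}$. Here is the key step where the universal reading of (2) is used: given $c\in R(A)\cap R(B)$, take $a=c\in R(A)$ and $b=-c\in R(B)$, so that $a+b=0$. Then $x_0=0$ is trivially a solution of $(A+B)x=0$, so by (2) we must have $A\cdot 0=a=c$, forcing $c=0$. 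This proves $R(A+B)=R(A)\dotplus R(B)$, which is exactly $A\lminus A+B$.

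The only real obstacle is the (re)interpretation of (2): I would flag explicitly in the proof that the statement is to be read as ``every solution of $(A+B)x=a+b$ solves the coupled system''. Once that is settled, the argument reduces to the two short manipulations above, with the witness $x_0=0$ in the second half playing the decisive role.
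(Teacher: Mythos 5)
Your proof is correct and follows essentially the same route as the paper's: the forward direction is identical, and the converse likewise first obtains $R(A+B)=R(A)+R(B)$ from solvability and then extracts directness from the ``moreover'' clause. The only difference is cosmetic---the paper tests a common solution of $(A+B)x=c$ against the two decompositions $c=c+0=0+c$, whereas you test the trivial solution $x_0=0$ against $0=c+(-c)$---and your explicit remark that statement (2) must be read as applying to \emph{every} solution of \eqref{eq A+B} is well taken, since the paper's own converse tacitly relies on that same universal reading (with the purely existential reading the implication $(2)\Rightarrow(1)$ would in fact fail, e.g.\ for two rank-one matrices with equal ranges and complementary null spaces).
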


\begin{proof}
  Suppose that $A \lminus A+B$ and $a\in R(A)$ and $b\in R(B)$.  Since
  $A \lminus A+B$, then $R(A+B)=R(A)\dotplus R(B)$.  Therefore,
  $a+b\in R(A+B)$, so that there exists $x_0\in \H$ satisfying
  $(A+B)x_0=a+b$.  Moreover, $Ax_0-a=b-Bx_0=0$ since $R(A)\cap
  R(B)=\{0\}$.  Hence $x_0$ is a solution of the system \eqref{eq A y
    B}.

  For the converse, let $a\in R(A)$.  By hypothesis, there exists
  $x_0\in \H$ such that $(A+B)x_0=a$.  Hence $R(A)\subseteq R(A+B)$,
  so that $R(A+B)=R(A)+R(B)$.  Now, let $c\in R(A)\cap R(B)$.
  Consider $x_0\in \H$ such that $(A+B)x_0=c$.  Since $c\in R(A)\cap
  R(B)$ then, also by hypothesis,
  \begin{equation*}
    \begin{cases}
      Ax_0=c & \\
      Bx_0 =0 &
    \end{cases}
    \quad\textrm{ and  }\quad 
    \begin{cases}
      Ax_0=0 & \\
      Bx_0 =c.  &
    \end{cases}
  \end{equation*}
  Therefore $c=0$, and so $R(A+B)=R(A)\dotplus R(B)$, or equivalently,
  $A\lminus A+B$.
\end{proof}

\medskip

More generally, in what follows we relate the least squares solutions
of the equation $Cx=y$ to a weighted least squares solution of the
system $Ax=y$ and $(C-A)x=y$ when $A \lminus C$.  We introduce the
seminorm given by a positive weight.  If $W\in L(\H)$ is a positive
(semidefinite), consider the seminorm $\|.\|_W$ on $\H$ defined by
\begin{equation*}
  \| x\|_W= \ip{W x}{ x}^{1/2}, \quad x\in \H.
\end{equation*}
Given $C\in L(\H)$ and $y\in \H$, an element $x_0\in \H$ is said a
$W$-\textit{least squares solution} ($W$-LSS) of the equation $Cx=y$
if
\begin{equation*}
  \|Cx_0-y \|_W= \underset{x\in \H}{\textrm{min }}  \|Cx-y\|_W.
\end{equation*}
It is well known that $x_0\in \H$ is a $W$-LSS of the equation $Cx=y$
if and only if $x_0$ is a solution of the associated \textit{normal
  equation},
\begin{equation*}
  C^*W(Cx-y)=0.
\end{equation*}

\begin{proposition}
  \label{LSS and WLSS}
  Let $A, B\in L(\H)$ be such that $R(A+B)$ is closed and $A \lminus
  A+B$ and $c\in \H$.  For $P\in \Q$ is any optimal projection such
  that $A=P(A+B)$, let $W=P^*P+(I-P^*)(I-P)$.  Then the following
  statements are equivalent:
  \begin{enumerate}
  \item $x_0$ is a solution of 
    \begin{equation}\label{LSP}
      \underset{x\in \H}{\mathrm{argmin\,}}\| (A+B)x-c\|;
    \end{equation}
  \item $x_0$ is a solution of the system of least squares problems,
    \begin{equation}\label{SLSP}
      \left\{\begin{array}{l}
          \underset{x\in \H}{\mathrm{argmin\,}}\|Ax-c\|_{W}\\
          \underset{x\in \H}{\mathrm{argmin\,}}\| Bx-c\|_{W}. 
        \end{array}
      \right.
    \end{equation}
  \end{enumerate}
\end{proposition}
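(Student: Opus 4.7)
The plan is to decompose the single least squares problem \eqref{LSP} into the two weighted ones of \eqref{SLSP} by using the optimality of $P$. First, Corollary~\ref{leftminus then minus} upgrades the hypotheses $A\lminus A+B$ and $R(A+B)$ closed to $A\minus A+B$ with $R(A)$ and $R(B)$ both closed; then optimality of $P$, via Lemma~\ref{projection E}, identifies $E := P_A P + P_B(I-P)$ with $P_{\cl{R(A+B)}}$. A direct estimate $\langle Wx,x\rangle = \|Px\|^2 + \|(I-P)x\|^2 \geq \tfrac12\|x\|^2$ shows $W\geq \tfrac12 I$, so $W$ is positive and invertible, $\|\cdot\|_W$ is an equivalent norm, and each problem in \eqref{SLSP} is characterised by the normal equation $A^*W(Ax_0-c)=0$ (resp.\ $B^*W(Bx_0-c)=0$).

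Next I would translate \eqref{LSP} into conditions on $Ax_0$ and $Bx_0$. Since $R(A+B)$ is closed, $x_0$ solves \eqref{LSP} iff $(A+B)x_0 = Ec = P_APc + P_B(I-P)c$. Using $A = P(A+B)$, $B = (I-P)(A+B)$, the direct sum $R(A+B) = R(A)\dotplus R(B)$, and the inclusions $P_APc\in R(A)$, $P_B(I-P)c\in R(B)$ (valid because $R(A)$, $R(B)$ are closed), uniqueness of the decomposition yields
\[
x_0 \text{ solves \eqref{LSP}} \iff Ax_0 = P_APc \text{ and } Bx_0 = P_B(I-P)c.
\]
The crux is matching these two equations with the normal equations of \eqref{SLSP}. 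The key algebraic identity is $P^*W = P^*P$, which follows from $(P^*)^2 = P^*$ and $P^*(I-P^*)=0$; combined with $A^* = (A+B)^*P^*$ it gives $A^*W = (A+B)^*P^*P$. Using $PP_A = P_A$ (since $R(A)\subseteq R(P)$ and $R(P)$ is closed) and $A^*P_A = A^*$ (since $P_AA=A$), one computes
\[
A^*W(I - P_AP) = (A+B)^*P^*P - (A+B)^*P^*P_AP = A^*P - A^*P = 0,
\]
so $Ax_0 = P_APc$ satisfies the normal equation; uniqueness of the $W$-orthogonal projection of $c$ onto the closed subspace $R(A)$ gives the converse.

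The symmetric argument, using $(I-P^*)W = (I-P^*)(I-P)$, $B^* = (A+B)^*(I-P^*)$, $(I-P)P_B = P_B$ and $B^*P_B = B^*$, shows that $Bx_0 = P_B(I-P)c$ is equivalent to $x_0$ being a $W$-LSS of $Bx = c$. Chaining the equivalences concludes \eqref{LSP} $\Leftrightarrow$ \eqref{SLSP}. The main technical hurdle is the key identity $A^*W = (A+B)^*P^*P$ together with its $B$-analog: optimality of $P$ (equivalently, self-adjointness of $E$) is precisely what ensures that the oblique projections $P_APc$ and $P_B(I-P)c$ of $c$ onto the generally non-orthogonal closed subspaces $R(A)$ and $R(B)$ coincide with the $W$-orthogonal projections, which is what decouples the minimisation.
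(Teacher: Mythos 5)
Your proof is correct, and its forward half is essentially the paper's: both reduce the problem to normal equations, both use Corollary~\ref{leftminus then minus} to get $A\minus A+B$ with closed ranges, and both exploit optimality of $P$ precisely so that $E=P_AP+P_B(I-P)$ becomes the orthogonal projection onto $R(A+B)$, after which the direct sum $R(A+B)=R(A)\dotplus R(B)$ decouples $(A+B)x_0=Ec$ into $Ax_0=P_APc$ and $Bx_0=P_B(I-P)c$. Where you genuinely diverge is in the converse. The paper starts from the two $W$-normal equations $A^*(Ax_0-Pc)=0$ and $B^*(Bx_0-(I-P)c)=0$ and uses the explicit parametrization of optimal projections, $P=P_{R(A)\Oplus\M_1//R(B)\Oplus\N_1}$ with $\M_1\dotplus\N_1=N(A^*)\cap N(B^*)$, to show the cross terms $A^*(Bx_0-(I-P)c)$ and $B^*(Ax_0-Pc)$ vanish, then adds everything up to recover $(A+B)^*((A+B)x_0-c)=0$. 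You instead prove the identity $A^*W(I-P_AP)=0$ (via $P^*W=P^*P$ and $A^*=(A+B)^*P^*$), which exhibits $P_AP$ as the $W$-orthogonal projection onto the closed subspace $R(A)$, and then invoke uniqueness of that projection to turn each weighted normal equation into the exact condition $Ax_0=P_APc$; this lets you run a single chain of equivalences through the pivot condition rather than arguing the two directions separately. Your route buys a cleaner, more symmetric argument that isolates exactly where optimality is used (only in identifying $E$ with $P_{R(A+B)}$, since $A^*W(I-P_AP)=0$ holds for any $P$ with $A=P(A+B)$); the paper's route is more computational but avoids having to verify that $P_AP$ is idempotent with range $R(A)$ and $W$-self-adjoint. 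Both are complete; just make sure you state explicitly, as you implicitly use, that $PP_A=P_A$ and $P_AP$ is an idempotent with range $R(A)$, so that "the $W$-orthogonal projection onto $R(A)$" is legitimately identified with $P_AP$.
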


\begin{proof}
  Assume $(1)$ holds.  Applying Corollary~\ref{leftminus then minus},
  if $A \lminus A+B$ and $R(A+B)$ is closed then $R(A)$ and $R(B)$ are
  closed and $A\minus A+B$.  Suppose that $x_0$ is a solution of
  \eqref{LSP} then $x_0$ is a solution of the associated normal
  equation
  \begin{equation*}
    (A+B)^*((A+B)x-c)=0.
  \end{equation*}
  Then 
  $(A+B)x_0-c\in N(A^*+B^*)=N(A^*)\cap N(B^*)$ so that
  \begin{equation*}
    E((A+B)x_0-c)=0,
  \end{equation*}
  where $E=P_AP+P_B(I-P)$ is the orthogonal projection onto $R(A+B)$
  because $P$ is optimal.  Therefore,
  \begin{equation*}
    Ax_0-P_APc=-(Bx_0-P_B(I-P)c),
  \end{equation*}
  and because $R(A)\cap R(B)=\{0\}$, we have
  $Ax_0-P_APc=Bx_0-P_B(I-P)c=0$.  Hence $P_A(Ax_0-Pc)=0$ and
  $P_B(Bx_0-(I-P)c)=0$, or equivalently,
  \begin{equation*}
    A^*(Ax_0-Pc)=0\; \textrm{ and }\; B^*(Bx_0-(I-P)c)=0.
  \end{equation*}
  Thus $A^*P(Ax_0-c)=0$ and $B^*(I-P)(Bx_0-c)=0$.

  Finally, observe that $A^*P=A^*P^*P=A^*W$, where
  $W=P^*P+(I-P^*)(I-P)$.  Then $x_0$ is a solution of the normal
  equation
  \begin{equation*}
    A^*W(Ax-c)=0.
  \end{equation*}
  Equivalently, $x_0$ is a solution of
  \begin{equation*}
    \underset{x\in \H}{\mathrm{argmin\,}}\|Ax-c\|_{W}.
  \end{equation*}
  In the same way, the equation $B^*(I-P)(Bx-c)=0$ is equivalent to
  $B^*W(Bx-c)=0$ which is the normal equation of the minimizing
  problem
  \begin{equation*}
    \underset{x\in \H}{\mathrm{argmin\,}}\|Bx-c\|_{W}.
  \end{equation*}

  Next consider the converse.  As we noted above, $x_0$ is a solution
  of the system \eqref{SLSP} if and only if $x_0$ is a solution of
  \begin{equation*}
    \begin{cases}
      A^*(Ax-Pc)=0 &\\
      B^*( Bx-(I-P)c)=0. &
    \end{cases}
  \end{equation*}
  We show in this case, that $A^*(Bx_0-(I-P)c)=0$.  In fact, from the
  second equation we have that $Bx_0-(I-P)c \in R(B)^\perp$, since
  applying $P_{R(B)^\perp}$ to $Bx_0-(I-P)c$, we get that
  $Bx_0-(I-P)c=-P_{R(B)^\perp}(I-P)c$.  If $P$ is optimal, then
  $P=P_{{R(A)}\Oplus \mathcal M_1// R(B)\Oplus \N_1}$ where $\mathcal
  M_1\dotplus \mathcal N_1=N(A^*)\cap N(B^*)$.  Thus, $Bx_0-(I-P)c\in
  P_{R(B)^\perp}(N(P))=\mathcal N_1\subseteq N(A^*)$.  Therefore
  $A^*(Bx_0-(I-P)c)=0$.  In the same way, $B^*(Ax_0-Pc)=0.$ The sum of
  $A^*(Ax_0-Pc)=0$ and $A^*(Bx_0-(I-P)c)=0$ gives $A^*((A+B)x_0-c)=0$.
  Analogously, $B^*((A+B)x_0-c)=0$.  Then $(A^*+B^*)((A+B)x_0-c)=0$,
  and so $x_0$ is a solution of the problem $\underset{x\in
    \H}{\mathrm{argmin\,}}\|(A+B)x-c\|_{W}$.
\end{proof}

\medskip

\begin{corollary}
  Let $A, B\in L(\H)$ be such that $R(A+B)$ is closed and $A \lstella
  A+B$ and $c\in \H$.  Then the following statements are equivalent:
  \begin{enumerate}
  \item $x_0$ is a solution of
    \begin{equation*}
      \underset{x\in \H}{\mathrm{argmin\,}}\| (A+B)x-c\|;
    \end{equation*}
  \item $x_0$ is a solution of the following system of least squares
    problems:
    \begin{equation*}
      \begin{cases}
        \underset{x\in \H}{\mathrm{argmin\,}}\|Ax-c\| & \\
        \underset{x\in \H}{\mathrm{argmin\,}}\| Bx-c\|. &
      \end{cases}
    \end{equation*}
\end{enumerate}
\end{corollary}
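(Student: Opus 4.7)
The plan is to reduce this statement to Proposition~\ref{LSS and WLSS} by choosing a canonical optimal projection $P$ for which the weighting $W$ becomes the identity. First, by Proposition~\ref{left star order entonces minus order}, $A\lstella A+B$ implies $A\lminus A+B$, so the hypotheses of Proposition~\ref{LSS and WLSS} are satisfied.

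Next, I would show that $P=P_A$ is an optimal projection with $A=P(A+B)$. Recall from the proof of Proposition~\ref{left star order entonces minus order} that $A\lstella A+B$ is equivalent to $A=P_A(A+B)$ together with $R(A)\subseteq R(A+B)$, so the first condition holds automatically. For optimality, by Proposition~\ref{*< sii rangos ortog} the relation $A\lstella A+B$ gives $R(A+B)=R(A)\Oplus R(B)$, so in particular $\cl{R(B)}\subseteq \cl{R(A)}^\perp$, hence $P_A P_B = P_B P_A = 0$. Setting $P=P_A$ in Lemma~\ref{projection E},
\begin{equation*}
E = P_A P_A + P_B(I-P_A) = P_A + P_B - P_B P_A = P_A + P_B,
\end{equation*}
which is selfadjoint. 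So $P=P_A$ is optimal in the sense of Definition~\ref{def:optimal}.

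Finally, for this choice the weight in Proposition~\ref{LSS and WLSS} is
\begin{equation*}
W = P_A^* P_A + (I-P_A^*)(I-P_A) = P_A + (I-P_A) = I,
\end{equation*}
so $\|\cdot\|_W = \|\cdot\|$ and the system \eqref{SLSP} becomes precisely the system in statement~(2) of the corollary. The conclusion follows directly from Proposition~\ref{LSS and WLSS}.

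There is no real obstacle here; the only thing to verify carefully is that $P_A$ is indeed optimal, which is immediate from the orthogonality $R(A)\perp R(B)$ built into the left star order. Once $W=I$ is observed, the corollary is just the specialization of Proposition~\ref{LSS and WLSS} to the star-order setting.
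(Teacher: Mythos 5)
Your proposal is correct and follows essentially the same route as the paper: both reduce the statement to Proposition~\ref{LSS and WLSS} by taking $P=P_A$ as the optimal projection and observing that $W=P_A^*P_A+(I-P_A^*)(I-P_A)=I$. Your explicit verification that $P_A$ is optimal (via $E=P_A+P_B$ being selfadjoint, using $R(A)\perp R(B)$) simply spells out a detail the paper leaves implicit.
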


\begin{proof}
  This follows from Proposition~\ref{LSS and WLSS} and the fact that
  we can take $P=P_A$ as an optimal projection such that $A=P(A+B)$
  (see Proposition~\ref{left star order entonces minus order}).
  In this case, $W=P^*P+(I-P^*)(I-P)=I$.
\end{proof}

\bigskip

\begin{finalremark}
  It is possible to define a weak version of the minus order in the
  following way: Consider $A, B\in L(\H)$, we write $A\minus_w B$ if
  there exist two densely defined idempotent operators $P,Q$ with
  closed ranges such that $A=PB$, $R(P)=\roverline{R(A)}$, $A^*=QB^*$
  and $R(Q)=\roverline{R(A^*)}$.  The relation $\minus_w$ is a partial
  order on $L(\H)$.  In fact, it is not difficult to see that the
  relation is reflexive and antisymmetric.

  For transitivity, consider $A, B, C\in L(\H)$ such that $A \minus_w
  B$ and $B\minus_w C$.  By definition there exist $P_1, P_2, Q_1,
  Q_2$ densely defined idempotent operators such that $A=P_1B$,
  $R(P_1)=\roverline{R(A)}$, $A^*=Q_1B^*$,
  $R(Q_1)=\roverline{R(A^*)}$, $B=P_2C$ and $R(P_2)=\roverline{R(B)}$.
  $B^*=Q_2A^*$ and $R(Q_2)=\roverline{R(B^*)}$.  Observe that
  $A=P_1P_2C$.  Without loss of generality, suppose that
  $P_1=P_{\hs\roverline{R(A)}//{R(B-A)}\Oplus \mathcal M_1}$ and
  $P_2=P_{\hs\roverline{R(B)}//{R(C-B)}\Oplus \mathcal M_2}$, with
  $\mathcal M_1=R(B)^\perp$ and $ \mathcal M_2=R(C)^\perp$.  Let
  $\mathcal D=\roverline{R(A)}\dotplus R(B-A)\dotplus R(C-B)\Oplus
  \mathcal M_2$.  Note that $\mathcal D$ is dense and $R(C)\subseteq
  \mathcal \D\subseteq D(P_1P_2)$, where the second inclusion follows
  because $P_2 x=0$ for all $ x \in R(C-B)\Oplus \mathcal M_2$ and
  $P_2 x =x $ for all $x \in \roverline{R(A)}\dotplus R(B-A)\subseteq
  \roverline{R(B)}\subseteq \D(P_1)$.  Consider $P=P_1P_2|_{\mathcal
    D}$, then $PC=P_1P_2|_{\mathcal D}C=P_1P_2C=A$.  If $x\in
  \roverline{R(A)}$, then $Px=P_1P_2|_{\mathcal D}x=P_1x=x$, because
  $\roverline{R(A)}\subseteq \roverline{R(B)}$.
  Therefore, $\roverline{R(A)}\subseteq R(P)\subseteq
  R(P_1)=\roverline{R(A)}$ so that $R(P)=\roverline{R(A)}$.  Since
  $R(P)=\roverline{R(A)}\subseteq \D(P)$ and
  $P^2=P_1P_2P_1P_2|_{\mathcal D}=P_1P_1P_2|_{\mathcal D}=P$, the
  operator $P$ is idempotent.  Therefore, $P$ is a densely defined
  projection with closed range such that $A=PC$.  Similarly, it
  follows that there exists a densely defined projection $Q$ with
  closed range such $A^*=QC^*$.  Hence $A\minus_w C$, and so
  $\minus_w$ is a partial order.

  Moreover, if $A, B\in L(\H)$ it can be proved that the following
  statements are equivalent:
  \begin{enumerate}
  \item $A\minus_w  B$;
  \item $ \roverline{R(A)}\cap
    {R(B-A)}=\roverline{R(A^*)}\cap{R(B^*-A^*)}=0$;
  \item $ \roverline{R(B)}= \roverline{\roverline{R(A)}\dotplus
      {R(B-A)}}$ and
    $\roverline{R(B^*)}=\roverline{\roverline{R(A^*)}\dotplus
      {R(B^*-A^*)}}$.
  \end{enumerate} 

  Finally, note that $\minus_w$ is weaker than the minus order.  In
  fact, consider $P, Q\in \P$ such that $R(P)\cap R(Q)=0$ and
  $c_0(R(P),R(Q))=1$.  Then by the equivalence $(1) \Leftrightarrow
  (2)$ above, $P\minus_w P+Q$.  However, by
  Proposition~\ref{equivalencias minus order}, it is not the case that
  $P\minus P+Q$.
\end{finalremark}

\begin{bibdiv}
  \begin{biblist}

\bib{AntCanMosSto10}{article}{
    AUTHOR = {Antezana, J.}
    AUTHOR = {Cano, C.}
    AUTHOR = {Mosconi, I.}
    AUTHOR = {Stojanoff, D.},
     TITLE = {A note on the star order in {H}ilbert spaces},
   JOURNAL = {Linear Multilinear Algebra},
  FJOURNAL = {Linear and Multilinear Algebra},
    VOLUME = {58},
      YEAR = {2010},
    NUMBER = {7-8},
     PAGES = {1037--1051},
      ISSN = {0308-1087},
   MRCLASS = {47A05 (06A06 15A45 46C05 47L99)},
  MRNUMBER = {2742334},
MRREVIEWER = {Rajendra Bhatia},
       URL = {http://dx.doi.org/10.1080/03081080903227104},
}

\bib{AntCorSto06}{article}{
    AUTHOR = {Antezana, Jorge}
    AUTHOR = {Corach, Gustavo}
    AUTHOR = {Stojanoff, Demetrio},
     TITLE = {Bilateral shorted operators and parallel sums},
   JOURNAL = {Linear Algebra Appl.},
  FJOURNAL = {Linear Algebra and its Applications},
    VOLUME = {414},
      YEAR = {2006},
    NUMBER = {2-3},
     PAGES = {570--588},
      ISSN = {0024-3795},
     CODEN = {LAAPAW},
   MRCLASS = {47A64},
  MRNUMBER = {2214409},
MRREVIEWER = {{\`E}dward L. Pekarev},
       URL = {http://dx.doi.org/10.1016/j.laa.2005.10.039},
}

\bib{AriCorGon15}{article}{
    AUTHOR = {Arias, M. Laura}
    AUTHOR = {Corach, Gustavo}
    AUTHOR = {Gonzalez, M. Celeste},
     TITLE = {Saddle point problems, {B}ott-{D}uffin inverses, abstract
              splines and oblique projections},
   JOURNAL = {Linear Algebra Appl.},
  FJOURNAL = {Linear Algebra and its Applications},
    VOLUME = {457},
      YEAR = {2014},
     PAGES = {61--75},
      ISSN = {0024-3795},
   MRCLASS = {47A62 (41A65)},
  MRNUMBER = {3230433},
       URL = {http://dx.doi.org/10.1016/j.laa.2014.05.006},
}
		
\bib{AriCorGon13}{article}{
    AUTHOR = {Arias, M. Laura}
    AUTHOR = {Corach, Gustavo}
    AUTHOR = {Gonzalez, M. Celeste},
     TITLE = {Additivity properties of operator ranges},
   JOURNAL = {Linear Algebra Appl.},
  FJOURNAL = {Linear Algebra and its Applications},
    VOLUME = {439},
      YEAR = {2013},
    NUMBER = {11},
     PAGES = {3581--3590},
      ISSN = {0024-3795},
   MRCLASS = {47A16},
  MRNUMBER = {3119873},
       URL = {http://dx.doi.org/10.1016/j.laa.2013.09.023},
}

\bib{AriCorMae15}{article}{
    AUTHOR = {Arias, M. Laura}
    AUTHOR = {Corach, Gustavo}
    AUTHOR = {Maestripieri,
              Alejandra},
     TITLE = {Range additivity, shorted operator and the
              {S}herman-{M}orrison-{W}oodbury formula},
   JOURNAL = {Linear Algebra Appl.},
  FJOURNAL = {Linear Algebra and its Applications},
    VOLUME = {467},
      YEAR = {2015},
     PAGES = {86--99},
      ISSN = {0024-3795},
   MRCLASS = {47A64 (46C07 47A05)},
  MRNUMBER = {3284802},
MRREVIEWER = {{\`E}dward L. Pekarev},
       URL = {http://dx.doi.org/10.1016/j.laa.2014.11.001},
}

\bib{BakMit91}{article}{
    AUTHOR = {Baksalary, Jerzy K.}
    AUTHOR = {Mitra, Sujit Kumar},
     TITLE = {Left-star and right-star partial orderings},
   JOURNAL = {Linear Algebra Appl.},
  FJOURNAL = {Linear Algebra and its Applications},
    VOLUME = {149},
      YEAR = {1991},
     PAGES = {73--89},
      ISSN = {0024-3795},
     CODEN = {LAAPAW},
   MRCLASS = {15A09 (15A45)},
  MRNUMBER = {1092870},
MRREVIEWER = {R. Kala},
       URL = {http://dx.doi.org/10.1016/0024-3795(91)90326-R},
}

\bib{BakSemSty96}{article}{
    AUTHOR = {Baksalary, Jerzy K.}
    AUTHOR = {{\v{S}}emrl, Peter}
    AUTHOR = {Styan, George 
              P. H.}, 
     TITLE = {A note on rank additivity and range additivity}, 
      NOTE = {Special issue honoring Calyampudi Radhakrishna Rao}, 
   JOURNAL = {Linear Algebra Appl.}, 
  FJOURNAL = {Linear Algebra and its Applications}, 
    VOLUME = {237/238}, 
      YEAR = {1996}, 
     PAGES = {489--498}, 
      ISSN = {0024-3795}, 
     CODEN = {LAAPAW}, 
   MRCLASS = {15A60}, 
  MRNUMBER = {1382690}, 
MRREVIEWER = {Guo Rong Wang}, 
       URL = {http://dx.doi.org/10.1016/0024-3795(94)00118-9}, 
} 

\bib{BakTre10}{article}{
    AUTHOR = {Baksalary, Oskar Maria}
    AUTHOR = {Trenkler, G{\"o}tz},
     TITLE = {Core inverse of matrices},
   JOURNAL = {Linear Multilinear Algebra},
  FJOURNAL = {Linear and Multilinear Algebra},
    VOLUME = {58},
      YEAR = {2010},
    NUMBER = {5-6},
     PAGES = {681--697},
      ISSN = {0308-1087},
   MRCLASS = {15A09},
  MRNUMBER = {2722752},
MRREVIEWER = {Juan Manuel Pe{\~n}a},
       URL = {http://dx.doi.org/10.1080/03081080902778222},
}

\bib{BakTre11}{article}{
    AUTHOR = {Baksalary, Oskar Maria}
    AUTHOR = {Trenkler, G{\"o}tz}, 
     TITLE = {On disjoint range matrices}, 
   JOURNAL = {Linear Algebra Appl.}, 
  FJOURNAL = {Linear Algebra and its Applications}, 
    VOLUME = {435}, 
      YEAR = {2011}, 
    NUMBER = {6}, 
     PAGES = {1222--1240}, 
      ISSN = {0024-3795}, 
     CODEN = {LAAPAW}, 
   MRCLASS = {15A03 (15A09 15B57)}, 
  MRNUMBER = {2807146}, 
MRREVIEWER = {Qing Wen Wang}, 
       URL = {http://dx.doi.org/10.1016/j.laa.2011.03.005}, 
} 

\bib{CorFonMae16}{article}{
    AUTHOR = {Corach, Gustavo}
    AUTHOR = {Fongi, Guillermina}
    AUTHOR = {Maestripieri,
              Alejandra},
     TITLE = {Optimal inverses and abstract splines},
   JOURNAL = {Linear Algebra Appl.},
  FJOURNAL = {Linear Algebra and its Applications},
    VOLUME = {496},
      YEAR = {2016},
     PAGES = {182--192},
      ISSN = {0024-3795},
   MRCLASS = {41A65 (41A15 46C05)},
  MRNUMBER = {3464068},
MRREVIEWER = {Grzegorz Lewicki},
       URL = {http://dx.doi.org/10.1016/j.laa.2015.12.028},
}
		
\bib{CorMae10}{article}{
    AUTHOR = {Corach, G.}
    AUTHOR = {Maestripieri, A.},
     TITLE = {Polar decomposition of oblique projections},
   JOURNAL = {Linear Algebra Appl.},
  FJOURNAL = {Linear Algebra and its Applications},
    VOLUME = {433},
      YEAR = {2010},
    NUMBER = {3},
     PAGES = {511--519},
      ISSN = {0024-3795},
     CODEN = {LAAPAW},
   MRCLASS = {47B15 (46C05 47A05)},
  MRNUMBER = {2653816},
MRREVIEWER = {Francisco D. Mart{\'{\i}}nez Per{\'{\i}}a},
       URL = {http://dx.doi.org/10.1016/j.laa.2010.03.016},
}
		
\bib{CorMae05}{article}{
    AUTHOR = {Corach, Gustavo}
    AUTHOR = {Maestripieri, Alejandra},
     TITLE = {Weighted generalized inverses, oblique projections, and
              least-squares problems},
   JOURNAL = {Numer. Funct. Anal. Optim.},
  FJOURNAL = {Numerical Functional Analysis and Optimization. An
              International Journal},
    VOLUME = {26},
      YEAR = {2005},
    NUMBER = {6},
     PAGES = {659--673},
      ISSN = {0163-0563},
     CODEN = {NFADOL},
   MRCLASS = {47A05 (47A62 65J10)},
  MRNUMBER = {2187919},
MRREVIEWER = {Sudhir H. Kulkarni},
       URL = {http://dx.doi.org/10.1080/01630560500323083},
}
		
\bib{CorMaeSto02}{article}{
    AUTHOR = {Corach, G.}
    AUTHOR = {Maestripieri, A.}
    AUTHOR = {Stojanoff, D.},
     TITLE = {Generalized {S}chur complements and oblique projections},
      NOTE = {Special issue dedicated to Professor T. Ando},
   JOURNAL = {Linear Algebra Appl.},
  FJOURNAL = {Linear Algebra and its Applications},
    VOLUME = {341},
      YEAR = {2002},
     PAGES = {259--272},
      ISSN = {0024-3795},
     CODEN = {LAAPAW},
   MRCLASS = {47A62 (47A99 47B65)},
  MRNUMBER = {1873624},
MRREVIEWER = {{\`E}dward L. Pekarev},
       URL = {http://dx.doi.org/10.1016/S0024-3795(01)00384-6},
}
		
\bib{DenWan12}{article}{
    AUTHOR = {Deng, Chunyuan}
    AUTHOR = {Wang, Shunqin},
     TITLE = {On some characterizations of the partial orderings for bounded
              operators},
   JOURNAL = {Math. Inequal. Appl.},
  FJOURNAL = {Mathematical Inequalities \& Applications},
    VOLUME = {15},
      YEAR = {2012},
    NUMBER = {3},
     PAGES = {619--630},
      ISSN = {1331-4343},
   MRCLASS = {47A05},
  MRNUMBER = {2962457},
       URL = {http://dx.doi.org/10.7153/mia-15-54},
}

\bib{Deu95}{incollection}{
    AUTHOR = {Deutsch, Frank}, 
     TITLE = {The angle between subspaces of a {H}ilbert space}, 
 BOOKTITLE = {Approximation theory, wavelets and applications ({M}aratea, 
              1994)}, 
    SERIES = {NATO Adv. Sci. Inst. Ser. C Math. Phys. Sci.}, 
    VOLUME = {454}, 
     PAGES = {107--130}, 
 PUBLISHER = {Kluwer Acad. Publ., Dordrecht}, 
      YEAR = {1995}, 
   MRCLASS = {46C05 (47A05 65J05)}, 
  MRNUMBER = {1340886}, 
} 

\bib{Dji16}{article}{
    AUTHOR = {Djiki{\'c}, Marko S.}, 
     TITLE = {Properties of the star supremum for arbitrary {H}ilbert space 
              operators}, 
   JOURNAL = {J. Math. Anal. Appl.}, 
  FJOURNAL = {Journal of Mathematical Analysis and Applications}, 
    VOLUME = {441}, 
      YEAR = {2016}, 
    NUMBER = {1}, 
     PAGES = {446--461}, 
      ISSN = {0022-247X}, 
     CODEN = {JMANAK}, 
   MRCLASS = {47A65 (47A63)}, 
  MRNUMBER = {3488067}, 
MRREVIEWER = {Zbigniew Burdak}, 
       URL = {http://dx.doi.org/10.1016/j.jmaa.2016.04.020}, 
} 

\bib{Dix49}{article}{
    AUTHOR = {Dixmier, Jacques}, 
     TITLE = {\'{E}tude sur les vari\'et\'es et les op\'erateurs de {J}ulia, 
              avec quelques applications}, 
   JOURNAL = {Bull. Soc. Math. France}, 
  FJOURNAL = {Bulletin de la Soci\'et\'e Math\'ematique de France}, 
    VOLUME = {77}, 
      YEAR = {1949}, 
     PAGES = {11--101}, 
      ISSN = {0037-9484}, 
   MRCLASS = {46.3X}, 
  MRNUMBER = {0032937}, 
MRREVIEWER = {G. W. Mackey}, 
} 

\bib{DolMar11}{article}{
    AUTHOR = {Dolinar, Gregor}
    AUTHOR = {Marovt, Janko}, 
     TITLE = {Star partial order on {$B(H)$}}, 
   JOURNAL = {Linear Algebra Appl.}, 
  FJOURNAL = {Linear Algebra and its Applications}, 
    VOLUME = {434}, 
      YEAR = {2011}, 
    NUMBER = {1}, 
     PAGES = {319--326}, 
      ISSN = {0024-3795}, 
     CODEN = {LAAPAW}, 
   MRCLASS = {47L99 (06A06 15A03 15A09 47A05)}, 
  MRNUMBER = {2737252}, 
MRREVIEWER = {Xiaoji Liu}, 
       URL = {http://dx.doi.org/10.1016/j.laa.2010.08.023}, 
} 

\bib{Dou66}{article}{
    AUTHOR = {Douglas, R. G.}, 
     TITLE = {On majorization, factorization, and range inclusion of 
              operators on {H}ilbert space}, 
   JOURNAL = {Proc. Amer. Math. Soc.}, 
  FJOURNAL = {Proceedings of the American Mathematical Society}, 
    VOLUME = {17}, 
      YEAR = {1966}, 
     PAGES = {413--415}, 
      ISSN = {0002-9939}, 
   MRCLASS = {47.10}, 
  MRNUMBER = {0203464}, 
MRREVIEWER = {Chandler Davis}, 
} 

\bib{Dra78}{article}{
    AUTHOR = {Drazin, Michael P.}, 
     TITLE = {Natural structures on semigroups with involution}, 
   JOURNAL = {Bull. Amer. Math. Soc.}, 
  FJOURNAL = {Bulletin of the American Mathematical Society}, 
    VOLUME = {84}, 
      YEAR = {1978}, 
    NUMBER = {1}, 
     PAGES = {139--141}, 
      ISSN = {0002-9904}, 
   MRCLASS = {20M10 (20M20)}, 
  MRNUMBER = {0486234}, 
MRREVIEWER = {D. J. Foulis}, 
} 

\bib{FilFis99}{article}{
    AUTHOR = {Fill, James Allen}
    AUTHOR = {Fishkind, Donniell E.}, 
     TITLE = {The {M}oore-{P}enrose generalized inverse for sums of 
              matrices}, 
   JOURNAL = {SIAM J. Matrix Anal. Appl.}, 
  FJOURNAL = {SIAM Journal on Matrix Analysis and Applications}, 
    VOLUME = {21}, 
      YEAR = {1999}, 
    NUMBER = {2}, 
     PAGES = {629--635 (electronic)}, 
      ISSN = {0895-4798}, 
   MRCLASS = {15A09 (15A18)}, 
  MRNUMBER = {1742814}, 
MRREVIEWER = {Jian Long Chen}, 
       URL = {http://dx.doi.org/10.1137/S0895479897329692}, 
} 

\bib{FilWil71}{article}{
    AUTHOR = {Fillmore, P. A.}
    AUTHOR = {Williams, J. P.}, 
     TITLE = {On operator ranges}, 
   JOURNAL = {Advances in Math.}, 
  FJOURNAL = {Advances in Mathematics}, 
    VOLUME = {7}, 
      YEAR = {1971}, 
     PAGES = {254--281}, 
      ISSN = {0001-8708}, 
   MRCLASS = {47A65 (46C99)}, 
  MRNUMBER = {0293441}, 
MRREVIEWER = {L. J. Wallen}, 
} 

\bib{Gre74}{article}{
    AUTHOR = {Greville, T. N. E.}, 
     TITLE = {Solutions of the matrix equation {$XAX=X$}, and relations 
              between oblique and orthogonal projectors}, 
   JOURNAL = {SIAM J. Appl. Math.}, 
  FJOURNAL = {SIAM Journal on Applied Mathematics}, 
    VOLUME = {26}, 
      YEAR = {1974}, 
     PAGES = {828--832}, 
      ISSN = {0036-1399}, 
   MRCLASS = {15A09}, 
  MRNUMBER = {0347845}, 
MRREVIEWER = {M. F. Smiley}, 
} 

\bib{Gro99}{article}{
    AUTHOR = {Gro{\ss}, J{\"u}rgen}, 
     TITLE = {On oblique projection, rank additivity and the 
              {M}oore-{P}enrose inverse of the sum of two matrices}, 
   JOURNAL = {Linear and Multilinear Algebra}, 
  FJOURNAL = {Linear and Multilinear Algebra}, 
    VOLUME = {46}, 
      YEAR = {1999}, 
    NUMBER = {4}, 
     PAGES = {265--275}, 
      ISSN = {0308-1087}, 
     CODEN = {LNMLAZ}, 
   MRCLASS = {15A03 (15A09)}, 
  MRNUMBER = {1729199}, 
       URL = {http://dx.doi.org/10.1080/03081089908818620}, 
} 

\bib{Har80}{article}{
    AUTHOR = {Hartwig, Robert E.}, 
     TITLE = {How to partially order regular elements}, 
   JOURNAL = {Math. Japon.}, 
  FJOURNAL = {Mathematica Japonica}, 
    VOLUME = {25}, 
      YEAR = {1980}, 
    NUMBER = {1}, 
     PAGES = {1--13}, 
      ISSN = {0025-5513}, 
     CODEN = {MAJAA9}, 
   MRCLASS = {06F05}, 
  MRNUMBER = {571255}, 
MRREVIEWER = {L. Fuchs}, 
} 

\bib{JosSiv15}{article}{
    AUTHOR = {Jose, Shani}
    AUTHOR = {Sivakumar, K. C.}, 
     TITLE = {On partial orders of {H}ilbert space operators}, 
   JOURNAL = {Linear Multilinear Algebra}, 
  FJOURNAL = {Linear and Multilinear Algebra}, 
    VOLUME = {63}, 
      YEAR = {2015}, 
    NUMBER = {7}, 
     PAGES = {1423--1441}, 
      ISSN = {0308-1087}, 
   MRCLASS = {47A05 (06A06 15A09)}, 
  MRNUMBER = {3299331}, 
       URL = {http://dx.doi.org/10.1080/03081087.2014.942248}, 
} 

\bib{Kat66}{book}{
    AUTHOR = {Kato, Tosio}, 
     TITLE = {Perturbation theory for linear operators}, 
   EDITION = {Second}, 
      NOTE = {Grundlehren der Mathematischen Wissenschaften, Band 132}, 
 PUBLISHER = {Springer-Verlag, Berlin-New York}, 
      YEAR = {1976}, 
     PAGES = {xxi+619}, 
   MRCLASS = {47-XX}, 
  MRNUMBER = {0407617}, 
} 

\bib{LesSem96}{article}{
    AUTHOR = {Le{\v{s}}njak, Gorazd}
    AUTHOR = {{\v{S}}emrl, Peter}, 
     TITLE = {Quasidirect addition of operators}, 
   JOURNAL = {Linear and Multilinear Algebra}, 
  FJOURNAL = {Linear and Multilinear Algebra}, 
    VOLUME = {41}, 
      YEAR = {1996}, 
    NUMBER = {4}, 
     PAGES = {377--381}, 
      ISSN = {0308-1087}, 
     CODEN = {LNMLAZ}, 
   MRCLASS = {47A99 (15A30)}, 
  MRNUMBER = {1481910}, 
MRREVIEWER = {T. Ando}, 
       URL = {http://dx.doi.org/10.1080/03081089608818486}, 
} 

\bib{Mit72}{article}{
    AUTHOR = {Mitra, Sujit Kumar}, 
     TITLE = {Fixed rank solutions of linear matrix equations}, 
   JOURNAL = {Sankhy\=a Ser. A}, 
  FJOURNAL = {Sankhy\=a (Statistics). The Indian Journal of Statistics. 
              Series A}, 
    VOLUME = {34}, 
      YEAR = {1972}, 
     PAGES = {387--392}, 
      ISSN = {0581-572X}, 
   MRCLASS = {15A24}, 
  MRNUMBER = {0335545}, 
MRREVIEWER = {A. R. Amir-Moez}, 
} 

\bib{Mit86}{article}{
    AUTHOR = {Mitra, Sujit Kumar}, 
     TITLE = {The minus partial order and the shorted matrix}, 
   JOURNAL = {Linear Algebra Appl.}, 
  FJOURNAL = {Linear Algebra and its Applications}, 
    VOLUME = {83}, 
      YEAR = {1986}, 
     PAGES = {1--27}, 
      ISSN = {0024-3795}, 
     CODEN = {LAAPAW}, 
   MRCLASS = {15A45 (06A10 15A99)}, 
  MRNUMBER = {862729}, 
MRREVIEWER = {George E. Trapp}, 
       URL = {http://dx.doi.org/10.1016/0024-3795(86)90262-4}, 
} 

\bib{Mit91}{article}{
    AUTHOR = {Mitra, Sujit Kumar}, 
     TITLE = {Matrix partial order through generalized inverses: unified 
              theory}, 
   JOURNAL = {Linear Algebra Appl.}, 
  FJOURNAL = {Linear Algebra and its Applications}, 
    VOLUME = {148}, 
      YEAR = {1991}, 
     PAGES = {237--263}, 
      ISSN = {0024-3795}, 
     CODEN = {LAAPAW}, 
   MRCLASS = {15A09}, 
  MRNUMBER = {1090763}, 
MRREVIEWER = {Valerie Miller}, 
       URL = {http://dx.doi.org/10.1016/0024-3795(91)90096-F}, 
} 

\bib{Nam80}{article}{
    AUTHOR = {Nambooripad, K. S. Subramonian}, 
     TITLE = {The natural partial order on a regular semigroup}, 
   JOURNAL = {Proc. Edinburgh Math. Soc. (2)}, 
  FJOURNAL = {Proceedings of the Edinburgh Mathematical Society. Series II}, 
    VOLUME = {23}, 
      YEAR = {1980}, 
    NUMBER = {3}, 
     PAGES = {249--260}, 
      ISSN = {0013-0915}, 
     CODEN = {PEMSA3}, 
   MRCLASS = {20M10 (06F05)}, 
  MRNUMBER = {620922}, 
MRREVIEWER = {R. McFadden}, 
       URL = {http://dx.doi.org/10.1017/S0013091500003801}, 
} 

\bib{Nas73}{incollection}{
    AUTHOR = {Nashed, M. Zuhair}, 
     TITLE = {Perturbations and approximations for generalized inverses and 
              linear operator equations}, 
 BOOKTITLE = {Generalized inverses and applications ({P}roc. {S}em., {M}ath. 
              {R}es. {C}enter, {U}niv. {W}isconsin, {M}adison, {W}is., 
              1973)}, 
     PAGES = {325--396. Publ. Math. Res. Center Univ. Wisconsin, No. 32}, 
 PUBLISHER = {Academic Press, New York}, 
      YEAR = {1976}, 
   MRCLASS = {47A55 (65J05)}, 
  MRNUMBER = {0500249}, 
MRREVIEWER = {C. Ilioi}, 
} 

\bib{Pen55}{article}{
    AUTHOR = {Penrose, R.}, 
     TITLE = {A generalized inverse for matrices}, 
   JOURNAL = {Proc. Cambridge Philos. Soc.}, 
    VOLUME = {51}, 
      YEAR = {1955}, 
     PAGES = {406--413}, 
   MRCLASS = {09.0X}, 
  MRNUMBER = {0069793}, 
MRREVIEWER = {O. Taussky-Todd}, 
} 

\bib{RakDinDjo14}{article}{
    AUTHOR = {Raki{\'c}, Dragan S.}
    AUTHOR = {Din{\v{c}}i{\'c}, Neboj{\v{s}}a 
              {\v{C}}.}
    AUTHOR = {Djordjevi{\'c}, Dragan S.}, 
     TITLE = {Core inverse and core partial order of {H}ilbert space 
              operators}, 
   JOURNAL = {Appl. Math. Comput.}, 
  FJOURNAL = {Applied Mathematics and Computation}, 
    VOLUME = {244}, 
      YEAR = {2014}, 
     PAGES = {283--302}, 
      ISSN = {0096-3003}, 
   MRCLASS = {47A05}, 
  MRNUMBER = {3250577}, 
       URL = {http://dx.doi.org/10.1016/j.amc.2014.06.112}, 
} 

\bib{Sem10}{article}{
    AUTHOR = {{\v{S}}emrl, Peter}, 
     TITLE = {Automorphism of {$B(H)$} with respect to minus partial order}, 
   JOURNAL = {J. Math. Anal. Appl.}, 
  FJOURNAL = {Journal of Mathematical Analysis and Applications}, 
    VOLUME = {369}, 
      YEAR = {2010}, 
    NUMBER = {1}, 
     PAGES = {205--213}, 
      ISSN = {0022-247X}, 
     CODEN = {JMANAK}, 
   MRCLASS = {47B49 (47A05)}, 
  MRNUMBER = {2643859}, 
MRREVIEWER = {Armando R. Villena}, 
       URL = {http://dx.doi.org/10.1016/j.jmaa.2010.02.059}, 
} 

\bib{Wer86}{article}{
    AUTHOR = {Werner, Hans-Joachim}, 
     TITLE = {Generalized inversion and weak bi-complementarity}, 
   JOURNAL = {Linear and Multilinear Algebra}, 
  FJOURNAL = {Linear and Multilinear Algebra}, 
    VOLUME = {19}, 
      YEAR = {1986}, 
    NUMBER = {4}, 
     PAGES = {357--372}, 
      ISSN = {0308-1087}, 
     CODEN = {LNMLAZ}, 
   MRCLASS = {15A09 (15A03 62J05)}, 
  MRNUMBER = {860723}, 
MRREVIEWER = {Anna Lee}, 
       URL = {http://dx.doi.org/10.1080/03081088608817730}, 
} 

  \end{biblist}
\end{bibdiv}

\end{document}